\newcommand{\be}{\begin{equation} }
\newcommand{\ee}{\end{equation}}
\newcommand{\bee}{\begin{equation*} }
\newcommand{\eee}{\end{equation*}}
\newcommand{\bse}{\begin{subequations}}
\newcommand{\ese}{\end{subequations}}
\newcommand{\R}{\mathbb{R}}
\newcommand{\rmd}{\mbox{\rm d}}
\newcommand{\imp}{\mbox{\rm Im}}
\theoremstyle{plain}
\newtheorem{theorem}{Theorem}[section]
\newtheorem{corollary}[theorem]{Corollary}
\newtheorem{lemma}{Lemma}[section]
\newtheorem{proposition}{Proposition}[section]
\theoremstyle{remark}
\newtheorem{remark}{Remark}[section]
\theoremstyle{definition}
\numberwithin{equation}{section}
\title[Stability of the BO multi-solitons ]
{Stability of multi-solitons for the Benjamin-Ono equation }
\author[Y. Lan]{Yang Lan}
\address[Yang Lan]{Yau Mathematical Sciences Center, Tsinghua University, 100084 Beijing, P. R. China}
\email{lanyang@mail.tsinghua.edu.cn}
\author[Z. Wang]{Zhong Wang}
\address[Zhong Wang]{School of Mathematics, Foshan University, 528000, P. R. China.}\email{wangzh79@fosu.edu.cn}
\date{}                                           
\begin{document}
\thispagestyle{empty}

\begin{abstract}

 This paper is concerned with the dynamical stability of the $m$-solitons of the Benjamin-Ono (BO) equation. This extends the work of Neves and Lopes \cite{LN}  which was restricted to $m=2$ the double solitons case. Multi-solitons are non-isolated constrained minimizers satisfying a suitable variational  nonlocal elliptic equation, the stability issue is reduced to the spectral analysis of higher order nonlocal operators consist of the Hilbert transform. Such operators are isoinertial and the negative eigenvalues of which can be located. Our approach in the spectral analysis consists in an invariant for the multi-solitons and new operator identities motivated by the bi-Hamiltonian structure of the BO equation. Since the BO equation is more likely a two dimensional integrable system, its recursion operator is not explicit and which contributes the main difficulties in our analysis. The key ingredient in the spectral analysis is by employing the completeness in $L^2$ of the squared eigenfunctions of the eigenvalue problem for the BO equation. It is demonstrated here that orbital  stability of soliton in $H^{\frac{1}{2}}(\R)$ implies that  all $m$-solitons are  dynamically  stable in $H^{\frac{m}{2}}(\R)$.

\end{abstract}

\maketitle

\noindent {\sl Keywords\/}:   Benjamin-Ono equation; multi-solitons; stability; recursion operator;\\ completeness relation.

\vskip 0.4cm

\noindent {\sl AMS Subject Classification} (2010): 35Q35, 35Q51, 37K05, 37K10  \\

\setcounter{tocdepth}{1}
\section{Introduction}
\label{intro} We consider the stability of the \emph{multi-solitons} of the  Benjamin-Ono (BO) equation
\begin{equation}\tag{BO}\label{eq: BO}
u_t+Hu_{xx}+2uu_x=0, \qquad u(x,t) \in \R, \; (x,t) \in\R\times \R.
\end{equation}
Here $u=u(x,t)$ represents the amplitude of wave, and $H$ is the Hilbert transform given by
\begin{equation}\label{Hilbert}
Hu(x,t)=\frac{1}{\pi} \text{P.V.}\int_{-\infty}^{\infty}\frac{u(y,t)}{ y-x}\rmd y,
\end{equation}
where P.V. indicates that the integral is to be computed in the principle value sense.
The BO equation \eqref{eq: BO}, formulated by Benjamin \cite{benjamin1967internal} and Ono \cite{ono1975algebraic},  is used to model long internal gravity waves
in a two-layer fluid. By passing to the deep water limit, the BO equation \eqref{eq: BO} can be formally obtained from the following Intermediate Long Wave (ILW) equation (as $\delta\rightarrow+\infty$) \cite{ablowitz1991solitons},
\begin{equation}\tag{ILW}\label{eq: ILW}
u_t+\frac{1}{\delta}u_x+Tu_{xx}+2uu_x=0, \ \
(Tf)(x)=\frac{1}{2\delta}\text{P.V.}\int_{-\infty}^{\infty}\coth \frac{\pi(y-x)}{2\delta}f(y)\rmd y.
\end{equation}
whereas the shallow water limit (as $\delta\rightarrow0$) of the ILW equation gives the Korteweg-de Vries (KdV) equation
\begin{equation}\tag{KdV}\label{KdV}
u_t + \frac{\delta}{3}u_{xxx}+2uu_x=0.
\end{equation}

\eqref{eq: BO} has much in common with \eqref{KdV}. A key difference is that \eqref{eq: BO} involves a singular integro-differential operator $H$, and this leads to solitons that only have algebraic decay for \eqref{eq: BO}, as opposed to exponential decay for \eqref{KdV}.
 \eqref{eq: BO} can be written as an infinite-dimensional completely integrable Hamiltonian
dynamical system with infinitely many conservation laws and a suitable Lax-pair formulation \cite{KLM99,GK21}. In particular, the following quantities are conserved formally along the flow of \eqref{eq: BO}:
\begin{eqnarray}
&&H_0(u):=\frac12\int_{\mathbb R}u\rmd x,\label{momentum}\\
&&H_1(u):=\frac12\int_{\mathbb R}u^2\rmd x,\label{mass}\\
&&H_2(u):=-\frac{1}{2}\int_{\mathbb R}\left(uHu_x +\frac23u^3\right)\rmd x, \label{ener}\\
&&H_3(u):=\frac{2}{3}\int_{\mathbb R}\left(u_x^2+\frac32u^2Hu_x+\frac12u^4\right)\rmd x.\label{conser2}
\end{eqnarray}
The \eqref{eq: BO} may be viewed as a Hamiltonian system of the form
\begin{equation}\label{eq:BO Hamiltonian}
u_t=\mathcal{J}\frac{\delta H_2(u)}{\delta u},
\end{equation}
where $\mathcal{J}$ is the operator $\partial_x$, and $\frac{\delta H_2(u)}{\delta u}$ (or simply $H'_2(u)$) refers to the variational derivative of $H_2$ as follows
$$\bigg(\frac{\partial}{\partial\epsilon}H_{2}(u+\epsilon v)\bigg)\mid_{\epsilon=0}
=\int^\infty_{-\infty}\frac{\delta H_{2}}{\delta u}(x)v(x)dx.
$$

However, unlike the KdV equation \eqref{KdV}, the bi-Hamiltonian structure of \eqref{eq: BO} is quite tough \cite{FS88}. As the BO equation formulated in terms of two space operator $\partial_x$ and the Hilbert transform $H$, which makes the \eqref{eq: BO} share many features with completely integrable equations in two spatial dimensions. Let subscript $12$
denote the dependence on $x_1:=x$ and $x_2$, then for arbitrary functions $f_{12}$ and $g_{12}$, let us  define the following bilinear form:
\begin{equation}\label{eq:bilinear form}
\langle f_{12},g_{12}\rangle:=\int_{\R^2}f_{12}g^\ast_{12}\rmd x_1\rmd x_2,
\end{equation}
here the asterisk superscript denotes the complex conjugate in the rest of this manuscript.
Define the operators (in $L^2(\R^2,\mathbb C)$ with domain $H^1(\R^2,\mathbb C)$)
\begin{equation}\label{eq:functions}
\mathcal{u}^{\pm}_{12}:=u_1\pm u_2+i(\partial_{x_1}\mp\partial_{x_2}),\ u_j=u(x_j,t),\ j=1,2,
\end{equation}
then two compatible Hamiltonian operators associated with the BO equation are given by
\begin{equation}\label{eq:Hamilton operator}
\mathcal{J}^{(1)}_{12}:=\mathcal{u}^{-}_{12},\ \quad \mathcal{J}^{(2)}_{12}:=\big(i\mathcal{u}^{-}_{12}H_{12}-\mathcal{u}^{+}_{12}\big)\mathcal{u}^{-}_{12},
\end{equation}
where the operator $H_{12}$ is a generalized Hilbert transformation as follows
\begin{equation}\label{eq:extended Hilbert }
\big(H_{12}f_{12}\big)(x_1,x_2):=\frac{1}{\pi} \text{P.V.}\int_{-\infty}^{\infty}\frac{F(y,x_1-x_2)}{y-(x_1+x_2)}\rmd y,
\end{equation}
with $f_{12}(x_1,x_2)=F(x_1+x_2,x_1-x_2)$.
Then the BO hierarchy can be represented as follows \cite{FS88}:
\begin{eqnarray}
&&u_t=\frac{i}{2n}\int_{\R}\delta(x_1-x_2)\big(\mathcal{R}_{12}^\star\big)^n\mathcal{u}^{-}_{12}\cdot1\rmd x_2  \nonumber\\&&=\frac{i}{2n}\int_{\R}\delta(x_1-x_2)\mathcal{u}^{-}_{12}\mathcal{R}^n_{12}\cdot1\rmd x_2
=\mathcal{J}\frac{\delta H_n(u)}{\delta u}, \quad n\in \mathbb N. \label{eq:BO hierarchy}
\end{eqnarray}
where $\star$  denotes the adjoint with respect to the bilinear form \eqref{eq:bilinear form}. The recursion operator $\mathcal{R}_{12}$ and the adjoint recursion operator $\mathcal{R}_{12}^\star$ are defined by
\begin{equation}\label{eq:recursion operator}
\mathcal{R}_{12}:=\big(\mathcal{J}^{(1)}_{12}\big)^{-1}\mathcal{J}^{(2)}_{12},\ \ \mathcal{R}^\star_{12}:=\mathcal{J}^{(2)}_{12}\big(\mathcal{J}^{(1)}_{12}\big)^{-1}=i\mathcal{u}^{-}_{12}H_{12}-\mathcal{u}^{+}_{12},
\end{equation}
and in view of \eqref{eq:recursion operator}, they satisfy the following well-coupling condition
\begin{equation}\label{eq:well coupling}
\mathcal{R}^\star_{12}\mathcal{J}^{(1)}_{12}=\mathcal{J}^{(1)}_{12} \mathcal{R}_{12}.
\end{equation}
The first few equations of the BO hierarchy are then
\begin{eqnarray*}
&& u_t-u_x=0, \quad\text{for} \quad n=1,\quad \eqref{eq: BO}, \quad\text{for} \quad n=2;\\
&& u_{t}+\frac{4}{3}\left(u^3+\frac{3}{2}uHu_x+\frac{3}{2}H(uu_x)-u_{xx}\right)_x=0,\quad\text{for} \quad n=3.
\end{eqnarray*}

The energy space, where $H_2(u)$ is well-defined, is $H^{\frac12}(\R)$. The existence of global weak solutions $u\in C([0,+\infty);H^{\frac12}(\R))\cap C^1([0,+\infty);H^{-\frac32}(\R))$ was proved by Saut \cite{S79}. For strong solutions, Ionescu and Kenig \cite{IK07} showed the global well posedness for $s\geq0$ (see also the works of Tao \cite{T04} and Molinet and Pilod \cite{MP12} for global well posedness result in $H^1(\R)$ ). Such solution conserves $H_1$ and other conservation laws for suitable $s\geq0$. Concerning the weak continuity of the BO flow map, we refer to the work of \cite{CK10}.  Breakthrough has been made for the sharp low regularity well posedness theory of the (m)KdV and NLS equations \cite{KT18,KV19}, where the continuous family of the conservation laws below $L^2$ are established. For \eqref{eq: BO}, the conservation laws are achieved in $H^s(\R)$ by Talbut \cite{T21} for any $s>-\frac12$, the sharp low regularity global well posedness in $H^s(\R)$ with $s>-\frac12$ has been shown by G{\'e}rard, Kappeler and Topalov \cite{GKT20} on the torus and by Killip, Laurens and Visan \cite{KLV23} on the real line.

The BO equation \eqref{eq: BO} has soliton
 of the form
 \begin{equation}\label{eq:so}
u(t,x) =Q_{c}(x-ct-x_0),\ \ Q_{c}(s)=\frac{2c}{c^2s^2+1},\ c>0,\ x_0\in \R.
\end{equation}
By inserting \eqref{eq:so} into \eqref{eq: BO}, we have
\begin{equation}\label{eq:stationary}
-HQ_c'-Q_c^2+cQ_c=0,\quad c>0.
\end{equation}
Amick and Toland \cite{AT91}, Frank and Lenzmann \cite{FL13} showed that \eqref{eq:stationary} possesses a unique (up to symmetries) nontrivial $L^\infty$ solution. \eqref{eq: BO} exhibits even more complicated solutions called {\em{multi-solitons}}. The $m$-soliton solution is
characterized by the $2m$ parameters  $c_j$ and $x_{j}\ (j=1, 2, ..., m)$ as follows
 \begin{equation}\label{1.6a}
 U^{(m)}(t,x)=U^{(m)}(x-c_1t-x_{1}, x-c_2t-x_{2}, \ldots, x-c_mt-x_{m}).
 \end{equation}
Here ${\mathbf{c}}=(c_1,\ldots,c_m)$ is a collection of wave speeds satisfying the conditions $c_j>0, c_j\not=c_k$ for
$j\not=k \ (j, k=1, 2, ..., m)$ and ${\mathbf{x}}=(x_1,\ldots,x_m)$ is the initial position. The multi-soliton
 $U^{ (m)}$ has an explicit expression given by the tau function $f$ \cite{M06},
 \begin{equation}\label{1.7a}
 U^{ (m)}=U^{ (m)}(t,x;{\mathbf{c},\mathbf{x}})=i\frac{\partial}{\partial x}{\rm ln}\ \frac{f^*}{f}, \  f={\rm det}\ F,
 \end{equation}
where $F=(f_{jk})_{1\leq j,k\leq m}$ is an $m\times m$ matrix with elements
\begin{equation}\label{1.7b}
 f_{jk}=\left(x-c_jt-x_j+\frac{i}{ c_j}\right)\delta_{jk}
-\frac{2i}{c_j-c_k}(1-\delta_{jk}).
\end{equation}
Here, $f^*$ is the complex conjugate of $f$ and $\delta_{jk}$ is the Kronecker's function. The expression \eqref{1.7a} shows that the BO multi-solitons exhibit no phase shift after the soliton collisions. Moreover, for large time $t$, the BO $m$-solitons can be represented by a superposition of $m$ algebraic solitons as follows
\begin{equation}\label{eq:asympotic behavior}
\lim_{t\rightarrow +\infty}\left\|U^{ (m)}((t,\cdot;{\mathbf{c},\mathbf{x}})-\sum_{n=1}^mQ_{c_j}(\cdot-c_jt-x_j)\right\|_{H^s(\R)}=0,\quad\  s\in \mathbb{N}.
\end{equation}

Over the past four decades, there are many known results  associated with the stability characteristics of the BO
solitons and multi-solitons. A spectral stability analysis of the solitons has been given by Chen and Kaup \cite{CK80};  The spectral stability of the general $m$-solitons was shown in \cite{MK97}; The orbital (i.e. up to translations) stability of one soliton in the energy space $H^{\frac12}(\R)$ was established in \cite{BBSDB83,W87}. Moreover, stability of solitons for two classes of nonlinear dispersive equations (consist of \eqref{eq: ILW} and BBM equations with general power type nonlinearity) were also investigated in \cite{W87}, see also \cite{B72} for earlier stability results.
Orbital stability of  double solitons in $H^1(\R)$ as critical points of the constrained Hamiltonian $H_{3}(u)$ was showed in \cite{LN}. The stability in $H^{\frac12}(\R)$ of sum of widely separated solitons was considered in \cite{GTT09,KM09} and the asymptotic stability of sum of $m$ solitons is established by Kenig and Martel \cite{KM09} by employing the approach of \cite{MMT02}. For the generalized Benjamin-Ono equation, there are interesting results concerning the asymptotic stability and blow up of their solutions \cite{KMR11,MP17}. The existence and uniqueness (for mass supercritical BO) of strongly interacting multi-solitons (multi-pole type solutions) for a generalized BO equation has been shown recently by the authors \cite{LW23}. For \eqref{eq: BO}, there is no multi-pole solutions since its eigenvalue problem possesses only finite and simple eigenvalues \cite{W16}.  We refer to \cite{S19} for a very nice exposition for the above related issues.

In this manuscript we aim to show the following dynamical stability of arbitrary $m$-solitons of the BO equation. As the BO equation is more likely a $2$d integrable system, our approach opens the way to treat the stability problems of multi-solitons for other completely integrable models like \eqref{eq: ILW}(even for some 2d integrable models like KP-I equation). Moreover, our approach can also give alternative proofs for the stability of multi-solitons of the KdV and mKdV equations \cite{MS93,LW}. The main result of this manuscript is as follows.

\begin{theorem}\label{thm1.1}
Given $m\in\mathbb N,$ $m\geq1$, a collection of wave speeds ${\mathbf c}=(c_1,\cdot\cdot\cdot,c_m)$ with $0<c_1<\cdot\cdot\cdot<c_m$ and a collection of space transitions $\mathbf x=(x_1,\cdot\cdot\cdot,x_m)\in\R^m$, let $U^{ (m)}(\cdot,\cdot;{\mathbf c},{\mathbf x})$ be the corresponding multi-solitons of \eqref{eq: BO}. Then for any $\epsilon>0$, there exists $\delta>0$ such that  for any $u_0\in H^{\frac{m}{2}}(\R)$, the following stability property holds. If
 \[
\|u_0-U^{ (m)}(0,\cdot;{\mathbf c},{\mathbf x})\|_{H^{\frac{m}{2}}}<\delta,
\]
then for any $t\in\R$ the corresponding solution $u$ of~\eqref{eq: BO} verifies
 \[
\inf_{\tau\in\R,\ {\mathbf y}\in\R^m}\|u(t)-U^{ (m)}(\tau,\cdot;{\mathbf c},{\mathbf y})\|_{H^{\frac{m}{2}}}<\epsilon.
\]
\end{theorem}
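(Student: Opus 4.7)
The plan is to adapt the Lyapunov-functional approach of Maddocks--Sachs and of Neves--Lopes \cite{LN} to arbitrary $N$. I would first construct a conserved functional
\begin{equation*}
\mathcal{L}(u) = H_{N+1}(u) + \sum_{k=1}^{N} \lambda_k(\mathbf{c})\, H_k(u),
\end{equation*}
choosing the Lagrange multipliers $\lambda_k=\lambda_k(\mathbf{c})$ so that $\mathcal{L}'(U^{(N)})=0$; since $U^{(N)}$ depends on $N$ wave speeds, this gives an $N$-dimensional system that determines the $\lambda_k$ uniquely, and makes $U^{(N)}$ a critical point of $\mathcal{L}$ satisfying an $N$-th order nonlocal stationary equation. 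The natural functional setting is $H^{N/2}(\R)$, matching the top-order term in $H_{N+1}$, and conservation of $\mathcal{L}$ along the BO flow is inherited from \eqref{eq:BO hierarchy}.

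The heart of the proof is the spectral analysis of the self-adjoint Hessian $\mathcal{M}_N := \mathcal{L}''(U^{(N)})$, a nonlocal operator of order $N$ built from $\partial_x$, $H$ and multiplication by $U^{(N)}$. The goal is to show that $\mathcal{M}_N$ has exactly $N$ strictly negative eigenvalues, that its kernel is the $N$-dimensional span of the position generators $\partial_{x_j}U^{(N)}$, and that it is coercive on the symplectic-orthogonal complement of the $2N$-dimensional tangent space to the soliton manifold. The negative index is produced by an \emph{isoinertia} argument: $\mathcal{M}_N$ varies continuously with $(\mathbf{c},\mathbf{x})$ and, as long as its kernel has constant dimension, preserves its Morse index, so the count may be performed in the asymptotic regime $t\to+\infty$ where \eqref{eq:asympotic behavior} splits $U^{(N)}$ into $N$ well-separated copies of $Q_{c_j}$. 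In that limit $\mathcal{M}_N$ is close to a direct sum of $N$ one-soliton Hessians, each of Morse index one from the single-soliton stability theory in $H^{1/2}(\R)$.

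Relating the order-$N$ Hessian to this one-soliton input is where the bi-Hamiltonian structure \eqref{eq:Hamilton operator}--\eqref{eq:recursion operator} enters. Because the recursion operator $\mathcal{R}_{12}$ lives on $\R^2$ and is not available in closed one-dimensional form, the KdV-style identity $\mathcal{L}_{n+1}'' = \mathcal{L}_n''\,\mathcal{R}+\ldots$ cannot be used directly. I would instead derive two-point operator identities from \eqref{eq:BO hierarchy} together with the well-coupling \eqref{eq:well coupling}, restrict them to the diagonal $x_1=x_2$, and evaluate them against the squared-eigenfunction basis of the BO Lax operator. Completeness of those squared eigenfunctions in $L^2(\R)$ provides a spectral resolution in which $\mathcal{M}_N$ is effectively diagonal, reducing the index count and the kernel identification to a finite-dimensional scalar problem.

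Once the spectral picture is in hand, the conclusion follows by a standard modulation and coercivity argument: impose $2N$ orthogonality conditions to select the modulation parameters $(\boldsymbol{\tau},\mathbf{y})$ optimally, prove a lower bound of the form
\begin{equation*}
\mathcal{L}(u)-\mathcal{L}(U^{(N)}) \geq C\,\|u-U^{(N)}(\boldsymbol{\tau},\cdot;\mathbf{c},\mathbf{y})\|_{H^{N/2}}^{2},
\end{equation*}
and combine it with conservation of $\mathcal{L}$ and a continuity-in-time bootstrap to propagate smallness for all $t\in\R$. The main obstacle, as stressed in the introduction, is the spectral step: in the absence of an explicit one-dimensional recursion operator, counting the negative eigenvalues of $\mathcal{M}_N$ and excluding zero modes other than the $N$ translational ones is the technical core, and it is the combination of the two-point identities generated by $\mathcal{R}_{12}$ with the squared-eigenfunction completeness that closes the count.
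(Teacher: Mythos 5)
Your overall architecture (conserved functional $H_{N+1}+\sum_k\lambda_k H_k$ with Vieta-type multipliers, isoinertial reduction of the Hessian to the asymptotic regime $t\to+\infty$, squared-eigenfunction completeness to substitute for the missing one-dimensional recursion operator, then a Lyapunov/coercivity step) is the same as the paper's, but there is a genuine gap at the decisive quantitative step: the Morse index of the Hessian. You assert that $\mathcal{M}_N=\mathcal{L}''(U^{(N)})$ has exactly $N$ negative eigenvalues because, after the splitting \eqref{eq:asympotic behavior}, it is close to a direct sum of $N$ one-soliton Hessians ``each of Morse index one from the single-soliton stability theory in $H^{1/2}$.'' That is not what the splitting produces. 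The asymptotic blocks are not the order-one operators $H_2''(Q_{c_j})+c_jH_1''(Q_{c_j})$ but the order-$N$ operators $L_{N,j}=S_N''(Q_{c_j})$, which carry all the multipliers and factor as $L_{N,j}=\prod_{k\neq j}\bigl(\mathcal{R}(Q_{c_j})+c_k\bigr)\bigl(H_2''(Q_{c_j})+c_jH_1''(Q_{c_j})\bigr)$. The prefactor changes the sign of the relevant quadratic form according to how many $c_k$ lie below $c_j$: concretely, $L_{N,j}\,\partial_{c_j}Q_{c_j}=-\prod_{k\neq j}(c_k-c_j)\,Q_{c_j}$, so the index of $L_{N,j}$ alternates with the position of $c_j$ in the ordered speeds — it is $(1,1)$ for $j$ odd and $(0,1)$ for $j$ even. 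Summing gives $n(\mathcal{M}_N)=\bigl[\frac{N+1}{2}\bigr]$, not $N$. This is exactly the point where the squared-eigenfunction completeness has to be used quantitatively (to evaluate $\langle L_{N,j}z,z\rangle$ on the spectral decomposition), and your proposal skips that computation, replacing it by an incorrect heuristic.

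The error propagates into your final step. With only $N$ constraints $H_1,\dots,H_N$, a negative count of $N$ could only be compensated if the multiplier Hessian $D=\bigl\{\partial^2 S_N/\partial\mu_i\partial\mu_j\bigr\}$ were positive definite; but an explicit computation (Sylvester's law applied to $B^TA$, whose diagonal entries are $\prod_{l\neq j}(c_l-c_j)$ with alternating signs) gives $p(D)=\bigl[\frac{N+1}{2}\bigr]$. The stability criterion that closes the argument is precisely the equality $n(\mathcal{M}_N)=p(D)$, and it holds with the value $\bigl[\frac{N+1}{2}\bigr]$ on both sides; with your count of $N$ it would fail, and no choice of $2N$ orthogonality conditions repairs this, because the negative directions must be killed by the \emph{conserved constraints}, not by modulation alone. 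Relatedly, the unqualified lower bound $\mathcal{L}(u)-\mathcal{L}(U^{(N)})\geq C\|u-U^{(N)}\|_{H^{N/2}}^2$ cannot hold as stated, since $S_N$ genuinely has negative directions at $U^{(N)}$; one must either restrict to the level sets $H_j(u)=H_j(U^{(N)})$ or, as in the Maddocks--Sachs mechanism, pass to the augmented functional $S_N(u)+\frac{C}{2}\sum_j\bigl(H_j(u)-H_j(U^{(N)})\bigr)^2$, for which $U^{(N)}$ is a nondegenerate (modulo the $N$ translations) unconstrained minimizer precisely because $n(\mathcal{M}_N)=p(D)$.
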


As a direct consequence, we give a new proof of the orbital stability of the double solitons in \cite{LN}. The main differences lie in the spectral analysis part in Section \ref{sec_4} (see Corollary \ref{co3.10} and Remark \ref{re:4.3} for details).

\begin{corollary}\label{Co1.1}\cite{LN}
The \eqref{eq: BO} double solitons $U^{(2)}(t,x)$ is orbitally stable in $H^1({\R})$.
\end{corollary}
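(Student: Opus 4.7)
The plan is to derive Corollary~\ref{Co1.1} as the $N=2$ specialization of Theorem~\ref{thm1.1}. For $N=2$ we have $H^{N/2}(\R) = H^1(\R)$, which is precisely the functional setting of \cite{LN} and the energy space on which the higher-order conserved quantity $H_3$ in \eqref{conser2} is well defined. In principle no further argument is needed once Theorem~\ref{thm1.1} is proved, but I would still sketch how the general machinery specializes, since the remark after the corollary indicates that the novelty is purely in the spectral step.

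First I would introduce the Lyapunov functional
\[
\mathcal L(u) \;=\; H_3(u) + \mu_2 H_2(u) + \mu_1 H_1(u) + \mu_0 H_0(u),
\]
with Lagrange multipliers $(\mu_0,\mu_1,\mu_2)$ chosen so that the double soliton $U^{(2)}(0,\cdot;\mathbf c,\mathbf x)$ is a critical point of $\mathcal L$. The Euler--Lagrange equation is a nonlocal elliptic equation built from the Hilbert transform; equivalently, $U^{(2)}$ arises from a stationary equation for the BO hierarchy in which the recursion operator $\mathcal R_{12}$ of \eqref{eq:recursion operator} encodes the interaction of the two speeds $c_1<c_2$. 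The multi-soliton thus becomes a non-isolated constrained minimizer, as announced in the abstract.

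Next, the heart of the argument is the spectral analysis of the Hessian $\mathcal L''(U^{(2)})$ acting on $H^1(\R)$. What one needs is: (i) $\mathcal L''(U^{(2)})$ has exactly the expected finite number of negative eigenvalues, (ii) its kernel is exactly spanned by the two translation/parameter generators of the double-soliton manifold, and (iii) $\mathcal L''(U^{(2)})$ is coercive on the subspace transverse to these symmetry directions and to the directions dual to the conservation laws $H_0,H_1,H_2$. These properties are to be extracted, as in Section~\ref{sec_4}, from the isoinertial property of the higher-order operators along the BO hierarchy, from the new operator identities tied to the bi-Hamiltonian pair $(\mathcal J^{(1)}_{12},\mathcal J^{(2)}_{12})$, and from the completeness in $L^2$ of the squared eigenfunctions of the BO Lax problem; this is what allows one to pin down the negative spectrum without an explicit formula for $\mathcal R_{12}$. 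This step is the genuine obstacle, and it is precisely where Corollary~\ref{co3.10} and Remark~\ref{re:4.3} diverge from the original approach of \cite{LN}.

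Once the spectral picture is in hand, the conclusion follows by the standard modulational Lyapunov argument. Given $u_0$ close to $U^{(2)}(0,\cdot;\mathbf c,\mathbf x)$ in $H^1(\R)$, I would use the implicit function theorem to pick modulation parameters $(\tau(t),\mathbf y(t))\in\R\times\R^2$ so that $v(t):=u(t)-U^{(2)}(\tau(t),\cdot;\mathbf c,\mathbf y(t))$ is orthogonal to $\ker \mathcal L''(U^{(2)})$, then use the conservation of $H_0,H_1,H_2,H_3$ along the BO flow together with the coercivity of $\mathcal L''(U^{(2)})$ on the admissible subspace to obtain a bound of the form $\|v(t)\|_{H^1}^2 \lesssim \|v(0)\|_{H^1}^2$. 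Choosing $\delta$ small enough in terms of $\epsilon$ yields the desired orbital stability, reproving the result of \cite{LN} through the new spectral route.
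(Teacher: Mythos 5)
Your proposal is correct and follows essentially the same route as the paper: Corollary~\ref{Co1.1} is obtained there simply as the $N=2$ specialization of Theorem~\ref{thm1.1} (where $H^{N/2}(\R)=H^{1}(\R)$), with the genuinely new ingredient being the spectral computation $in(\mathcal{L}_2)=(1,2)$ through Corollary~\ref{co3.10} and Remark~\ref{re:4.3}, exactly as you identify. The only cosmetic deviations are your inclusion of a $\mu_0 H_0$ term (the paper's functional $S_2$ in \eqref{1.9a} uses only $H_1,H_2,H_3$, and the Euler--Lagrange equation forces $\mu_0=0$ anyway) and your concluding modulation/implicit-function-theorem step, where the paper instead invokes the Maddocks--Sachs augmented Lagrangian of Proposition~\ref{pr2.1}; both are standard and serve the same purpose here.
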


\begin{remark} \label{re1.4} There are some interesting results of the stability and asymptotic stability of trains of $m$ solitons  for the BO equations obtained in \cite{GTT09,KM09}. Such type of stability (which holds also for other non-integrable
models, see \cite{MMT02} for subcritical gKdV equations) usually does not include the dynamical stability of $m$-solitons as in Theorem \ref{thm1.1}. We get the stability of the whole orbit of $m$-solitons for all the  time by minimizing the conserved quantities.
\end{remark}

We employ the approach from the stability analysis of the
multi-solitons of the KdV equation by means of
 variational argument \cite{MS93}. It is demonstrated that the Lyapunov functional $S_m$ of the BO $m$-solitons profile $U^{ (m)}(x)=U^{ (m)}(0,x)$ is given by (see also \cite{M06})
\begin{equation}\label{Lyafun}
S_m(u)=H_{m+1}(u)+\sum_{n=1}^m\mu_nH_{n}(u),
\end{equation}
and $\mu_n$ are  Lagrange multipliers which will be expressed in terms of the elementary
symmetric functions of $c_1,c_2, ..., c_m$. We refer to Section \ref{sepecsec2}
for more details.
Then we show that $U^{ (m)}$ is a critical point of the
functional $S_m$.
Using \eqref{Lyafun}, this condition can be written as the following Euler-Lagrange equation
\begin{equation}\label{E-L}
\frac{\delta H_{m+1}(u)}{\delta u}+\sum_{n=1}^m\mu_n\frac{\delta H_{n}(u)}{\delta u}=0, \ {\rm at}\ u=U^{ (m)}.
\end{equation}
The dynamical stability of $U^{ (m)}$ is implied by the fact that $U^{ (m)}(x)$ is a minimizer of
the functional $H_{m+1}$ under the following $m$ constraints
\begin{equation}\label{eq:constrain2}
H_{n}(u)=H_{n}(U^{ (m)}), \quad n=1, 2, ..., m,
\end{equation}
which requires that the self-adjoint second variation operator of $S_m$
\begin{eqnarray}\label{eq:linearized n-soliton operator}
\mathcal L_m:=S_m''(U^{ (m)}),
\end{eqnarray}
is strictly positive if one modulates the directions given by the constraints. We mention here that $\mathcal L_m$ is highly nonlocal since the Hilbert transform $H$ is involved.

As a byproduct of showing Theorem \ref{thm1.1}, one can express the negative eigenvalues of the isoinertial operator $\mathcal L_m$ \eqref{eq:linearized n-soliton operator} explicitly in terms of the wave speeds $\{c_j\}_{j=1}^m$. Similar result for the KdV equation was shown in \cite{W22}.
\begin{theorem}\label{thm1.3}
The linearized operator $\mathcal {L}_m$  around the $m$-solitons possesses $[\frac{m+1}{2}]$ negative eigenvalues $\nu_k$, $k=1,2,\cdot\cdot\cdot,[\frac{m+1}{2}]$, where $[x]$ is the largest integer not exceeding $x$. Moreover, for each $k$ and $j=1,2,\cdot\cdot\cdot,m$, there exist constants $C_k>0$, independent of the wave speeds $c_1,\cdot\cdot\cdot,c_m$, such that
\begin{eqnarray}\label{eigenvalue1.3}
\nu_k=-C_kc_{2k-1}\prod_{j\neq 2k-1}^m(c_j-c_{2k-1}),\quad k=1,2,\cdot\cdot\cdot,[\frac{m+1}{2}].
\end{eqnarray}
\end{theorem}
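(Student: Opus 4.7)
The plan is to diagonalize $\mathcal{L}_N$ in the basis of squared eigenfunctions of the BO Lax operator at $u = U^{(N)}$, after first expressing $\mathcal{L}_N$ as a polynomial in the recursion operator via the bi-Hamiltonian structure. Since $U^{(N)}$ is a critical point of $S_N$ by \eqref{1.10} and each profile $Q_{c_j}$ separately satisfies \eqref{eq:stationary}, the Lagrange multipliers $\mu_n$ in \eqref{1.9a} are pinned down: the polynomial $\mathcal{P}(\lambda) := \lambda^{N+1} + \sum_{n=1}^N \mu_n \lambda^n$ must factor as $\mathcal{P}(\lambda) = \lambda \prod_{j=1}^N(\lambda - c_j)$, so that each $\mu_n$ is (up to sign) an elementary symmetric function of $c_1,\ldots,c_N$.

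Next I would establish the operator identity $\mathcal{J}\mathcal{L}_N = \mathcal{P}(\mathcal{R})\mathcal{J}$ on a dense subspace, where $\mathcal{R}$ is the one-dimensional operator on $L^2(\R)$ obtained from the two-dimensional $\mathcal{R}_{12}$ of \eqref{eq:recursion operator} through the delta-function collapse in \eqref{eq:BO hierarchy}. The derivation would proceed by differentiating each Hamiltonian flow $u_t = \mathcal{J}H_n'(u)$ at $u = U^{(N)}$: iterated application of the well-coupling relation \eqref{eq:well coupling} converts $\mathcal{J}H_n''(U^{(N)})$ into $\mathcal{R}^{n-1}\mathcal{J}$, and summing against the $\mu_n$ yields $\mathcal{P}(\mathcal{R})\mathcal{J}$.

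The third step invokes the completeness in $L^2(\R)$ of the squared eigenfunctions $\{\Phi_j\}_{j=1}^N \cup \{\Psi_\lambda\}_{\lambda > 0}$ of the BO Lax operator at $u = U^{(N)}$, on which $\mathcal{R}$ acts diagonally with eigenvalues $c_j$ on $\Phi_j$ and $\lambda$ on $\Psi_\lambda$. Via the factorization, $\mathcal{L}_N$ becomes diagonal in this basis: the continuous contribution pairs $\mathcal{P}(\lambda)$ against the positive spectral density and is nonnegative by direct sign analysis after the $\mathcal{J}^{-1}$ inversion; on each discrete $\Phi_j$ the eigenvalue equals $-C_j c_j \prod_{m \neq j}(c_m - c_j)$ with a positive constant $C_j$ collecting the normalization of $\Phi_j$ and the residue of $\mathcal{P}/\lambda$ at $\lambda = c_j$. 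The sign of this expression is $(-1)^j$, so the negative eigenvalues are indexed by the odd values of $j$; setting $j = 2k - 1$ for $k = 1,\ldots,[\frac{N+1}{2}]$ yields both the count and the explicit formula \eqref{eigenvalue1.3}.

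The main obstacle is the operator identity in the second step. The BO recursion operator \eqref{eq:recursion operator} acts on $L^2(\R^2)$ and becomes one-dimensional only through the delta-function prescription in \eqref{eq:BO hierarchy}; propagating the well-coupling relation through $N$ such reductions, and verifying that the resulting $\mathcal{P}(\mathcal{R})$ agrees on the correct subspace with $\mathcal{J}\mathcal{L}_N$, requires careful algebraic bookkeeping. This is precisely the technical difficulty highlighted in the introduction as stemming from the two-dimensional nature of the BO integrable structure. A secondary subtlety is showing that the constants $C_k$ are independent of the wave speeds, which is needed both for \eqref{eigenvalue1.3} and for compatibility with the isoinertiality of $\mathcal{L}_N$ used in the proof of Theorem \ref{thm1.1}.
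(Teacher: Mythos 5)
Your route (diagonalizing $\mathcal L_N$ directly in the squared-eigenfunction basis at $u=U^{(N)}$, after writing $\mathcal J\mathcal L_N$ as a polynomial in the recursion operator) is essentially the alternative sketched in Remark \ref{re:4.6}, not the proof the paper gives, and as written it has two genuine gaps. First, the factorization $\mathcal J\mathcal L_N=\mathcal P(\mathcal R)\mathcal J$ at the $N$-soliton does not follow from the well-coupling relation: linearizing \eqref{eq:recursion2} gives \eqref{eq:recursion Hamiltonian2}, i.e. $H_{n+1}''(u)z=\mathcal R(u)H_n''(u)z+(\mathcal R'(u)z)H_n'(u)$, and the extra term $(\mathcal R'(u)z)H_n'(u)$ vanishes at a one-soliton because $H_{n+1}'(Q_c)+cH_n'(Q_c)=0$ for \emph{every} $n$ (this is how \eqref{formula of L} is obtained), but at $U^{(N)}$ only the single combination \eqref{1.10} vanishes, so the corrections at the different levels of the iteration do not cancel. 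What survives at $U^{(N)}$ is only the commutation identities \eqref{operator identity3}--\eqref{operator identity4}, which is strictly weaker than the factorization your step two needs; calling the remaining issue ``algebraic bookkeeping of the delta-function reduction'' misses where the obstruction actually sits.

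Second, even granting a diagonalization of $\mathcal J\mathcal L_N$ in that basis (which also requires a completeness relation for squared eigenfunctions at the $N$-soliton potential, established in the paper only for $N=1$), your identification of the negative eigenvalues is off: the discrete squared eigenfunctions produce the translation modes $\partial_{x_j}U^{(N)}$, i.e. the $N$-dimensional \emph{kernel} of $\mathcal L_N$, not eigenfunctions with eigenvalue $-C_jc_j\prod_{m\neq j}(c_m-c_j)$. The negative directions come from the generalized-kernel directions $\partial_{c_j}$, through relations of the type \eqref{eigenvalue}, $L_{N,j}\,\partial_{c_j}Q_{c_j}=\Gamma_jQ_{c_j}$, which are not eigenvalue equations; evaluating the quadratic form on these directions yields the \emph{count} $[\frac{N+1}{2}]$ (this is how Lemma \ref{lemma3.6} works), but not the eigenvalues themselves, and in particular gives you no handle on the constants $C_k$ or their independence of the speeds. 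The paper gets the formula \eqref{eigenvalue1.3} by a different mechanism: isoinertiality plus the asymptotic decoupling of Theorem \ref{th5.2} reduce the negative spectrum of $\mathcal L_N$ to that of the one-soliton operators $S_N''(Q_{c_j})$, and then an induction on $N$ using the one-soliton factorization \eqref{Kplus1 and K} together with the spectrum of $\mathcal R(Q_{c})+c_{K+1}$ from Lemma \ref{le3.6} multiplies in one factor $(c_{K+1}-c_{2k-1})$ at each step, anchored at the explicit $N=1$ eigenvalue of $L_{1,c}$; that anchoring is precisely what produces speed-independent constants. Your proposal has no substitute for either the decoupling step or the base case, so the count and the formula both remain unproved as stated.
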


The ideas developed by Maddocks and Sachs  have been successfully implemented to obtain stability results in various settings. Neves and Lopes~\cite{LN} proved the stability of double solitons of the BO equation, but it seems that their approach did not handle the arbitrary $m$-soliton. Le Coz and the second author ~\cite{LW} proved the stability of $m$-solitons of the mKdV equation, meanwhile, a quasi-linear integrable model called Camassa-Holm equation was considered by the second author and Liu \cite{LW20}, where stability of smooth multi-solitons is proved by employing some inverse scattering techniques.
We also mention the work of Kapitula~\cite{K07}, which is devoted to the stability of $m$-solitons of a large class of integrable systems, including in particular the cubic nonlinear Schr\"odinger equation. Very recently, a variational approach was used by Killip and Visan~\cite{KV20} to obtain the stability of KdV multi-solitons in  $H^{-1}(\R)$.  Stability results in low regularity  $H^s$ with $s>-\frac12$ were also obtained by Koch and Tataru~\cite{KoTa20} for multi-solitons of both the mKdV equation and the cubic nonlinear Schr\"odinger equation, the proof of which relies on an extensive analysis of an iterated B\"acklund transform. It is remarkable that ~\cite{KoTa20} also proved the stability of the multi-pole solutions of mKdV and cubic nonlinear Schr\"odinger equations. The major difference between the approach \cite{LW} and the approaches of \cite{MS93}, \cite{LN} lie in the analysis of spectral properties. Indeed, the spectral analysis of Maddocks and Sachs and many of their continuators relies on an extension of Sturm-Liouville theory to higher order differential operators (see~\cite[Section 2.2]{MS93}). As the BO equation is nonlocal, Neves and Lopes~\cite{LN} were lead to introduce a new strategy relying on isoinertial properties of the linearized operators around the $m$-solitons $\mathcal {L}_m$ for $m=2$. That is to say, the spectral information of $\mathcal {L}_2$ is independent of time $t$. Therefore, one can choose a convenient $t$ to calculate the inertia and the best thing we can do is to calculate the inertia $in(\mathcal {L}_2(t))$  as $t$ goes to $\infty$. However, in \cite{LN}, the approach of their spectral analysis for higher order linearized operators around one solitons can not be applied for large $m$.

To handle this issue, in \cite{LW}, we adapt the ideas of \cite{MS93} and \cite{LN} and develop a method to treat the  spectral analysis of linearized operators around arbitrary $m$-solitons. The main ingredient is to show some conjugate operator identities to prove the spectral information of the linearized operator around the multi-solitons. Such  conjugate operator identities are established by employing the recursion operator of the equations. In particular, let $\varphi_{c}$  be the one soliton profile with wave speed $c>0$ of the KdV or mKdV equation. The conservation laws of the equations denoted by $H_{K,n}$ (the subscript $K$ denotes the (m)KdV) for $n\geq1$. Then the linearized operator around the one soliton $H_{K,n+1}''(\varphi_{c})+cH_{K,n}''(\varphi_{c})$  can be diagonalized to their constant coefficient counterparts by employing the following auxiliary operators $M$ and $M^t$:
\begin{eqnarray*}
  M:=\varphi_{c}\partial_x\left(\frac{\cdot}{\varphi_{c}}\right),\quad M^t=-\frac{1}{\varphi_{c}}\partial_x\left(\varphi_{c}\,\cdot\,\right),
\end{eqnarray*}
the following conjugate operator identity holds:
 \begin{eqnarray}\label{Mt03}
M\bigg(H''_{K,n+1}(\varphi_{c})+cH''_{K,n}(\varphi_{c})\bigg)M^t=M^t\bigg((-\partial_x^2)^{n-1}(-\partial_x^2+c)\bigg)M.
\end{eqnarray}
 The recursion operator plays an important role in showing \eqref{Mt03} as it can not be computed by hand when $n$ is large. Such method is valid for a large amount of 1d completely integrable models which possess explicit recursion operators. However,  the BO equation is more similar to a $2$d completely integrable model and has no explicit recursion operators \eqref{eq:recursion operator}. Indeed,  as stated in Zakharov
and Konopelchenko \cite{ZK84}, recursion operators seem to exist explicitly only in $1$d integrable systems. Hence, the approach in \cite{LW} can not be directly applied for the BO equation.

To extend the spectral theory of Neves and Lopes~\cite{LN} to an arbitrary number $m$ of composing solitons, which leads to increasing technical complexity (inherent to the fact that the number of composing solitons is now arbitrary), no major difficulty arises here since which has been done in ~\cite{LW}. Then our main task was to implement this spectral theory for the multi-solitons of~\eqref{eq: BO}. At that level, we had to overcome major obstacles coming from the non-locality of the linearized operators. The conjugate type operator identities \eqref{Mt03} are usually wrong or very difficult to check.  To deal with the arbitrary $m$ case, it is necessary to acquire a deeper understanding of the relationships between $m$-solitons, the variational principle that they satisfy, and the spectral properties of the operators obtained by linearization of the conserved quantities around them. In particular, we need to have a good knowledge of the spectral information of the higher order linearized Hamiltonian $L_n:=H_{n+1}''(Q_c)+cH_n''(Q_c)$ for all $n\geq1$. To show the spectral information of such higher order linearized operators, to the best knowledge, there is no good way except the conjugate operator identity approach in the literature. In addition, as we stated before, it is impossible to prove the conjugate type operator identities \eqref{Mt03} for  large $n$, since the \eqref{eq: BO} possesses no explicit recursion operator ( the conjugate type operator identity is quite involved even for $n=2$ which achieved by brute force in ~\cite{LN}).

To overcome this difficulty, we present an approach for the spectral analysis of the linearized operators $L_n$ is as follows: Firstly, we derive the spectral information of the operator $\mathcal{J}L_n$, which is easier than to have the spectral information of $L_n$, the reason is that the operator $\mathcal{J}L_n$ is commutable with the adjoint recursion operator. The spectral analysis of the adjoint recursion operator is possible since we can solve the eigenvalue problem of the BO equation; Secondly, we show that the eigenfunctions of $\mathcal{J}L_n$ plus a generalized kernel of $\mathcal{J}L_n$ form an orthogonal basis in $L^2(\R)$, which can be viewed as a completeness or closure relation. Lastly, we calculate the quadratic form $\langle L_nz,z\rangle$ with function $z$ that has a decomposition in the above basis, then the spectral information of $L_n$ can be derived directly. We believe this approach can even be applied to some $2$d integrable models like KP-I equation.

The reminder of the paper is organized as follows. In Section \ref{sepecsec2}, we summarize some basic properties
of the Hamiltonian formulation of the BO equation and present some results with the help of IST,
which provide some necessary machinery in carrying out the spectral analysis.  Section \ref{sec_4} is devoted to a detailed spectral analysis of $\mathcal L_m$, the Hessian operator of $S_m$. The proof of Theorem \ref{thm1.1}, the dynamical stability of the $m$-soliton
solutions of the BO equation,  and Theorem ~\ref{thm1.3} will be given in Section \ref{sec_5}.

\section{Background results for the BO equation }\label{sepecsec2}
\setcounter{equation}{0}
In this section we collect some preliminaries in showing Theorem \ref{thm1.1}. This Section is divided into four parts. At the first part, we review some basic properties of the Hilbert transform $H$ and the generalized Hilbert transform $H_{12}$ defined in \eqref{eq:extended Hilbert }. Secondly, we present the equivalent eigenvalue problem of the BO equation and the basic facts of which through the inverse scattering transform. The conservation laws and trace formulas of the BO equation are also derived. In Subsection \ref{sec_2.3}, we recall the Euler-Lagrange equation of the BO multi-solitons in \cite{M06}, which admits a variational characterization of the $m$-soliton profile $U^{ (m)}(x)$. Subsection \ref{sec_2.4} is devoted to the investigation of the bi-Hamiltonian formation of the BO equation, the recursion operators are introduced to the computation of the conservation laws at the multi-solitons. Moreover, an iteration formula of the linearized operators $H_{n+1}''(Q_c)+cH_n''(Q_c)$ for all $n\in\mathbb N$ is established, it follows that investigating the properties of recursion operators (even if they are not explicit)  contributes the major difficulty of the spectral analysis issue.
\subsection{Some properties of the Hilbert transform}\label{sec_2.0}
For the reader's convenience, we review here some elementary properties of the Hilbert transform $H$ and the generalized  Hilbert transform $H_{12}$ (defined in \eqref{eq:extended Hilbert })  that figured in the forthcoming analysis.
It is demonstrated that for $f\in L^2(\R)$ implies $Hf\in L^2(\R)$ and the Fourier transform of $Hf$
\[ \widehat{Hf}(\xi)=i sgn(\xi)\hat{f}(\xi), \ \text{where}\ sgn(\xi)\xi=|\xi|,  \ \text{for all }\ \xi\in \R.\]
It is clear that $H^2f=-f$ for $f\in L^2(\R)$  and $H\partial_xf=\partial_xHf$ for $f\in H^1(\R)$. Moreover, the operator $H$ is skew-sdjoint in the sense that
\[\langle Hf,g\rangle=-\langle f,Hg\rangle,\]
and maps even functions into odd functions and conversely.

A useful property bears upon the Hilbert transformation of a function $f^+$ ($f^-$) analytic in the upper (lower) half complex plane and vanishing at $\infty$, in this case, one has
\begin{equation}\label{Hul}
Hf^{\pm}=\pm i f^{\pm}.
 \end{equation}

There is a parallel theory upon the generalized  Hilbert transform $H_{12}$ \eqref{eq:extended Hilbert }, for more details we refer to \cite{FS88}. Let $f_{12}=f(x_1,x_2)\in L^2(\R^2,\mathbb C)$  be the function depend on $x_1=x$ and $x_2$, then we see that
\[
H^2_{12}=-1, \ H^{\ast}_{12}=-H_{12}, \text{and}\ \partial_{x_j}H_{12}f_{12}=H_{12}\partial_{x_j}f_{12},\ j=1,2.
\]
Moreover, for any $g\in L^2(\R)$, there holds
\[
H_{12}g(x_j)=H_jg(x_j),\  H_jf(x_i,x_j):=\frac{1}{\pi} \text{P.V.}\int_{-\infty}^{\infty}\frac{f(x_i,y)}{ y-x_j}\rmd y,\ i\neq j.
\]
If $f_{12}^{(\pm)}:=\pm\frac{1}{2}(1\mp iH_{12})f_{12}$, then $f_{12}^{(+)}$ and $f_{12}^{(-)}$ are holomorphic for $\imp(x_1 +x_2)>0$
and $\imp(x_1+x_2)<0$, respectively. Moreover, one has
 \begin{equation}\label{Hu2}
H_{12}\big(f_{12}^{(+)}-f_{12}^{(-)}\big)=i\big(f_{12}^{(+)}+f_{12}^{(-)}\big).
 \end{equation}
\subsection{Eigenvalue problem and conservation laws}\label{sec_2.1}
The Benjamin-Ono equation can be solved by inverse scattering transform. Here, we list some results
related to the theory of the inverse scattering transform for the Benjamin-Ono equation, which are necessary for our stability analysis. We refer to \cite{CW90,FA83,KLM99,KM98,M06,W16,W17} for detailed proof of such results.

We fix a real valued function $u=u(t,x)$ on $\mathbb{R}\times \mathbb{R}$, such that for $t$ $u(t,x)$ has a good enough decay for $|x|\rightarrow+\infty$ . We also define the projection operators $P_\pm$ as follows: $P_{\pm}:=\pm\frac{1}{2}(1\mp iH)$ (therefore $P_+-P_-=1$). Let $\lambda$ be the eigenvalue (or the spectral parameter) and $\gamma$ be a constant to be chosen later. Now, we can consider the following eigenvalue problem
\begin{align}
&i\phi_x^++\lambda(\phi^+-\phi^-)=-u\phi^+,\ \lambda\in \R;\label{space lax}\\
&i\phi_t^{\pm}-2i\lambda\phi_x^{\pm}+\phi_{xx}^{\pm}-2iP_{\pm}(u_x)\phi^{\pm}=-\gamma\phi^{\pm}.\label{time lax}
 \end{align}
where for all fixed $t$, $\phi^+(t)$ (or $\phi^-(t)$, respectively) is the boundary value of some analytic function on the upper half complex plane $\mathbb{C}^+$ (or on the lower half complex plane $\mathbb{C}^-$, respectively). We define the Jost
solutions $N,{\bar N}, M,{\bar M}$ associated to \eqref{space lax} be functions in $(x,\lambda)$ satisfying
\begin{equation}\label{2.4b}
\begin{split}
&N_x-i\lambda N=iP_+(uN), \\
&\bar N_x-i\lambda \bar N=iP_+(u\bar N)-i\lambda,\\
&M_x-i\lambda M=iP_+(u M)-i\lambda, \\
&\bar M_x-i\lambda \bar M=iP_+(u\bar M),
\end{split}
\end{equation}
and the following boundary conditions
\begin{gather}
\label{Jost be1} \lim_{x\rightarrow+\infty}\left(|N(x,\lambda)- e^{i\lambda x}|+|\bar N(x,\lambda)-1|\right)=0,\\
\label{Jost be2}  \lim_{x\rightarrow-\infty}\left( |M(x,\lambda)- 1|+ |\bar M(x,\lambda) - e^{i\lambda x}|\right)=0.
\end{gather}

It is not hard to see that the Jost solutions satisfy
\begin{equation}\label{two jost re}
M=\bar N+\beta N,
\end{equation}
where $\beta$ is the reflection coefficient given by
\begin{equation*}
\beta(\lambda)=i\int_{\R}u(y)M(y,\lambda)e^{-i\lambda y}\rmd y.
\end{equation*}
It is inferred from \cite{KLM99} that the asymptotic behaviors of $N,\bar{N}$ and $M$ are given by :
 \begin{eqnarray}
&&\left|N(x,\lambda)-\frac{1}{\Gamma(\lambda)}e^{i\lambda x}\right|\rightarrow0,\ x\rightarrow -\infty,\ \Gamma(\lambda):=e^{\frac{1}{2\pi i}\int_0^{\lambda}\frac{|\beta(k)|^2}{k}\rmd k}; \label{2.N}\\
&&\left|\bar N(x,\lambda)-\left(1-\frac{\beta(\lambda)}{\Gamma(\lambda)}e^{i\lambda x}\right)\right|\rightarrow 0,\ x\rightarrow -\infty;\label{2.barN}\\
&&\left|M(x,\lambda)-( 1+\beta(\lambda)e^{i\lambda x})\right|\rightarrow0,\ x\rightarrow +\infty.\label{2.M}
\end{eqnarray}
There exist discrete eigenfunctions $\Phi_j(x)\in P_{+}(H^1(\R))$ associated to negative eigenvalues
$\lambda_j$ for $j=1, 2, ..., m$ (we mention here $m$ must be finite and $\lambda_j$ is simple, due to \cite{W16}), which satisfy the equation
\begin{equation}\label{equan phi}
\partial_{x}\Phi_{j}-i\lambda_j\Phi_j=iP_+(u\Phi_j), \ j=1, 2, ..., m,
\end{equation}
and the boundary conditions
\begin{equation}\label{asy phi}
\Phi_j(x) \sim \frac{1}{x}, \ x \rightarrow +\infty,
\ j=1, 2, ..., m.
\end{equation}
By using the Fredholm theory, Fokas and Ablowitz \cite{FA83} show that when $\lambda\rightarrow\lambda_j$, for some $j\in\{1,2,\ldots,m\}$, we have
$$\bar N(x,\lambda)\sim M(x,\lambda)= - \frac{i\Phi_j(x)}{\lambda-\lambda_j}+(x+\gamma_j)\Phi_j(x)+O\left(|\lambda-\lambda_j|\right).$$
Here the complex-valued constants $\gamma_j$ are called {\em{normalization constants}}. Moreover, we have
\begin{equation}\label{im normal}
\imp \gamma_j=-\frac{1}{2\lambda_j}=\frac{1}{c_j}.
\end{equation}
The set
\begin{equation} \label{eq sacttering data}
\mathcal{S}:=\left\{(\beta(\lambda),\lambda_1,\ldots,\lambda_m):\,\lambda>0\right\}
\end{equation}
 is called the \emph{scattering data}.
In particular, when $u$ is a soliton potential given by \eqref{eq:so}, one has that $\beta(\lambda)\equiv0$  and the corresponding Jost solutions can be computed explicitly. In this case, one has
\[
\lambda_1=-\frac{c}{2},\ \gamma_1=-x_0+\frac{i}{c}.
\]
Then it reveals from \eqref{equan phi} and \eqref{asy phi} that
\begin{eqnarray}
&&\Phi_1(x)=\frac{1}{x+\gamma_1};\label{phi1}\\
 &&\bar{N}(x,\lambda)=M(x,\lambda)=1-\frac{i\Phi_1(x)}{\lambda-\lambda_1};\label{phi2}\\
&&  N(x,\lambda)=e^{i\lambda x}\left(1+\frac{i}{\lambda_1}\Phi_1(x)\right).\label{phi3}
\end{eqnarray}

Let us compute the conservation laws of the BO equation. It follows from \eqref{2.4b} and  \eqref{time lax} (by choosing $\gamma=0$) that,
\begin{equation}\label{iteration dens}
\bar N_t-2\lambda \bar N_{x}-i\bar N_{xx}-2(P_+u_x)\bar N =0,
\end{equation}
therefore, the integral $\int^\infty_{-\infty}u(x,t)\bar N(x,t)dx$
is independent of time.  Expanding $\bar N$ as a powers series of $\lambda^{-1}$
\begin{equation*}
\bar N=\sum_{n=0}^\infty\frac{(-1)^n\bar N_{n+1}}{ \lambda^n}, \ \bar N_1=1,
\end{equation*}
and inserting it into \eqref{2.4b}, we obtain the following recursion relations of
$\bar N_n$:
\begin{equation}\label{iteration conser}
\bar N_{n+1}=i\bar N_{n,x}+P_+(u\bar N_n), \ n\geq 1.
\end{equation}
Therefore, the higher order conservation laws can be calculated as follows
\begin{equation*}
I_n(u)=(-1)^n\int^\infty_{-\infty}u\bar N_ndx.
\end{equation*}
The {\em{trace identities}} describes the relation between the conservation laws $I_n$ and the scattering data $(\beta(\lambda),\lambda_1,\ldots,\lambda_m)$:
\begin{equation}\label{trace for I}
I_n(u)=(-1)^n\left\{2\pi\sum_{j=1}^m(-\lambda_j)^{n-1}+\frac{(-1)^n}{2\pi}
\int^\infty_0\lambda^{n-2}|\beta(\lambda)|^2d\lambda\right\},
\ n=1, 2, ...,
\end{equation}
for $u\in L^2(\R,(1+x^2)dx)\cap L^{\infty}(\R)$.
 The first term on the right-hand side of \eqref{trace for I} is the contribution
of solitons while the second term comes from radiations.
In terms of $I_n$, the conservation laws $H_n$ presented in Section \ref{intro} can be expressed as follows:
 \begin{equation}\label{HI relation}
H_n=\frac{2^{n-1}}{n}I_{n+1}, \ \text{for all} \ n\geq 1.
\end{equation}
The first four of $H_n$ except $H_0$ are explicitly given by \eqref{mass}, \eqref{ener} and \eqref{conser2}.
It is inferred from \eqref{trace for I} that
\begin{equation}\label{trace for H}
H_n=\frac{(-1)^{n+1}}{n}\left\{\pi\sum_{j=1}^m(-2\lambda_j)^{n}+\frac{(-1)^{n+1}}{ 2\pi}
\int^\infty_0(2\lambda)^{n-1}|\beta(\lambda)|^2d\lambda\right\},
\ n=1, 2, ....
\end{equation}

Similar to the KdV equation case, the BO conservation laws  are in involution,
i.e., $H_n$ $(n=0,1, 2, ...)$ commute with each other  in the following Poisson bracket
\begin{equation*}
\int^\infty_{-\infty}\left(\frac{\delta H_n}{ \delta u}(x)\right)\bigg|_{u=U^{ (m)}}
\frac{\partial}{\partial x}\left(\frac{\delta H_l}{\delta u}(x)\right)\bigg|_{u=U^{ (m)}}dx=0,
\ n, l=0,1, 2, ...\ .
\end{equation*}
Note that $H_0$ is the unique Casimir function of \eqref{eq: BO}.
\par
\subsection{ The Euler-Lagrange equation of the $m$-solitons profile}\label{sec_2.3}
In order to show the dynamical stability of the BO $m$-solitons,
we need the formulas of  the variational derivatives of $H_n$ at
the $m$-soliton potential $U^{ (m)}(t,x)$. Using the explicit expression \eqref{1.7a} for the BO $m$-solitons, it would in theory be possible to verify by hand for any given $m$ that they also satisfy variational principles. However, the calculations would
 rapidly become unmanageable when $m$ grows. In \cite{M06}, Matsuno provided an algebraic proof for this fact. For sake of completeness, we give an overview of the results and proof in \cite{M06} %
\footnote{We mention here our conservation laws are sightly modified (see \eqref{HI relation}) with respect to the conservation laws in \cite{M06,LN}.}
.

The variational derivative of the discrete eigenvalues with respect to the
potential (at $m$-solitons profile) is given by
\begin{equation}\label{2.12}
\left(\frac{\delta \lambda_j}{ \delta u}(x)\right)\bigg|_{u=U^{ (m)}}=\frac{1}{2\pi\lambda_j}\Phi_j^*(x)\Phi_j(x),
\ j=1, 2, ..., m.
\end{equation}
Here, the eigenfunction $\Phi_j$  corresponding to the eigenvalue $\lambda_j$ satisfies the following equation
\begin{equation}\label{2.13}(x+\gamma_j)\Phi_j+i\sum_{k\neq j}^m\frac{1}{ \lambda_j-\lambda_k}\Phi_k=1,
\ j=1, 2, ..., m,
\end{equation}
where $\gamma_j=-x_{j}-\frac{i}{2\lambda_j}$ and $x_{j}$ are real constants and $\lambda_j=-\frac{c_j}{2}, j=1, 2, ..., m$.
Recall that the reflection coefficient $\beta(\lambda)=0$ when $u=U^{ (m)}$,
we use \eqref{trace for H} and \eqref{2.12} to obtain the variational derivatives of $H_n$  at $u=U^{(m)}$:
\begin{equation}\label{2.14}\left(\frac{\delta H_n}{ \delta u}(x)\right)\bigg|_{u=U^{ (m)}}=(-1)^{n+1}
2\sum_{j=1}^m(-2\lambda_j)^{n-2}\Phi_j^*(x)\Phi_j(x),
\ n= 1,2, 3, ..., m.
\end{equation}
The $m$-solitons profile $U^{ (m)}(0,x)$ has the following two alternative expressions \cite{M06}:
\begin{eqnarray}
&&U^{ (m)}=i\sum_{j=1}^m(\Phi_j-\Phi_j^*),\ \
U^{ (m)}=-\sum_{j=1}^m\frac{1}{\lambda_j}\Phi_j^*\Phi_j,\label{2.16}
\end{eqnarray}
which immediately implies that $U^{ (m)}(x)>0$  since discrete eigenvalues $\lambda_j=-\frac{c_j}{2}<0$.

On the other hand, the variational derivative of $\beta$ with respect to $u$ is given by
\begin{equation*}\label{2.17}
\frac{\delta\beta(\lambda)}{\delta u}(x)=iM(x,\lambda)N^*(x,\lambda).
\end{equation*}
When $u=U^{ (m)}$, one has  $\beta\equiv 0$ and therefore $M\equiv\bar N $ by \eqref{two jost re}.
We also have the the following orthogonality conditions for the  function $MN^*$
\begin{equation}\label{2.18}\int^\infty_{-\infty}M(x,\lambda)N^*(x,\lambda)\frac{\partial}{\partial x}
\left(\Phi_j^*(x)\Phi_j(x)\right)dx=0, j=1, 2, ..., m.
\end{equation}
Similarly, the  variational derivative of the normalization constants $\gamma_j$ ($j=1,2,\cdot\cdot\cdot,m$) with respect to $u$
is given by
\begin{eqnarray}
\frac{\delta \gamma_j}{\delta u}(x)&=&-\frac{1}{2\pi \lambda_j^2}(x+\gamma_j)\Phi_j^*\Phi_j+i\sum_{l\neq j}\frac{\Phi_j^*\Phi_l-\Phi_l^*\Phi_j}{2\pi \lambda_j(\lambda_l-\lambda_j)^2}\nonumber\\&+&\frac{1}{4\pi^2i \lambda_j}\int_0^{+\infty}\frac{\big(\beta(\lambda)\Phi_j^*N-\beta^\ast(\lambda)\Phi_jN^*\big)\rmd \lambda}{(\lambda-\lambda_j)^2}.\label{deriva gamma}
\end{eqnarray}
The results presented above are derived
by the IST of the BO equation,  especially through the analysis of the eigenvalue problem \eqref{space lax} of the Lax pair, we refer to \cite{FA83,KLM99,M06} for more details.

Using the above formula,  we can obtain the variational characterization of the BO $m$-solitons profile proved by Matsuno \cite{M06}. Here we provide an alternate proof for the last step in this approach:
\begin{proposition} \label{pr2.2}\cite{M06}  The profiles of the BO $m$-solitons  $U^{ (m)}$ satisfy \eqref{E-L} if the Lagrange multipliers $\mu_n$ are symmetric functions of the wave speeds $c_1,c_2,\cdot\cdot\cdot,c_m$ which satisfy the following:
\[
\prod_{n=1}^m(x+c_n)=x^m+\sum_{n=1}^m\mu_nx^{m-n}, \quad x\in \R.
\]
In particular,
$\mu_n$ are given by the following Vieta's formulas: for $k=1,\dots,m$
 \begin{equation}
\mu_{m+1-k}=\sum_{1\leq i_{1}<\cdots<i_{k}\leq
  m}\left(\prod_{j=1}^{k}c_{i_j}\right).\label{eq:vieta}
\end{equation}
\end{proposition}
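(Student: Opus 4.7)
The plan is algebraic: insert the explicit formula \eqref{2.14} for the variational derivatives of $H_n$ at the $N$-soliton into the Euler-Lagrange expression \eqref{1.10}, then reduce the statement to a polynomial identity in $c_1,\ldots,c_N$ which is precisely Vieta's formula \eqref{eq:vieta}. Using $\lambda_j=-c_j/2$, formula \eqref{2.14} simplifies to
\[
\frac{\delta H_n}{\delta u}\Big|_{u=U^{(N)}}=2(-1)^{n+1}\sum_{j=1}^N c_j^{\,n-2}\,\Phi_j^*(x)\Phi_j(x),\qquad n\geq 1.
\]
Substituting this into the LHS of \eqref{1.10} and collecting by $j$ gives
\[
\frac{\delta H_{N+1}}{\delta u}\Big|_{U^{(N)}}+\sum_{n=1}^N\mu_n\frac{\delta H_n}{\delta u}\Big|_{U^{(N)}}
=2\sum_{j=1}^N P(c_j)\,\Phi_j^*(x)\Phi_j(x),
\]
where $P(c):=(-1)^N c^{N-1}+\sum_{n=1}^N(-1)^{n+1}\mu_n c^{n-2}$.

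The main technical point, and the step I would expect to require most care, is the linear independence on $\R$ of the $N$ functions $\{\Phi_j^*\Phi_j\}_{j=1}^N$. I would establish it from the distinctness of the wave speeds through the system \eqref{2.13}: by Cramer's rule, each $\Phi_j$ is a rational function of $x$, and the pole structure of $\Phi_j^*\Phi_j$ in the complex plane is controlled by the (pairwise distinct) eigenvalues $\lambda_j$ and the imaginary parts $\imp\gamma_j=1/c_j$ prescribed by \eqref{im normal}. A nontrivial identity $\sum_j\alpha_j\Phi_j^*\Phi_j\equiv 0$ would then force cancellations among residues associated with different spectral data, which is impossible; in the one-soliton case this is immediate from \eqref{phi1}, and the general case is a perturbative refinement of the same pole-counting argument applied to the Cramer solution of \eqref{2.13}.

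Granted linear independence, \eqref{1.10} reduces to the scalar conditions $P(c_j)=0$ for every $j$. Multiplying $P$ by $(-1)^N c^{\,2}$ and dividing out the common factor $c$ (both legitimate since $c_j>0$) turns $P(c_j)=0$ into $Q(c_j)=0$, where $Q(c)=c^N+\sum_{n=1}^N(-1)^{N+n+1}\mu_n c^{n-1}$ is monic of degree $N$. The $N$ distinct roots then force $Q(c)=\prod_{n=1}^N(c-c_n)$, and matching the coefficient of $c^{N-k}$ on both sides yields
\[
\mu_{N+1-k}=\sum_{1\leq i_1<\cdots<i_k\leq N}c_{i_1}\cdots c_{i_k},\qquad k=1,\ldots,N,
\]
which is exactly the Vieta prescription \eqref{eq:vieta}. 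Since the hypothesis of the proposition is that the $\mu_n$ are chosen this way, $P(c_j)=0$ is automatic and \eqref{1.10} follows, completing the proof.
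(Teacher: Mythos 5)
Your proposal is correct and follows essentially the same route as the paper: substitute the explicit variational derivatives \eqref{2.14} into \eqref{1.10}, use the (functional) independence of the squared eigenfunctions $\Phi_j^*\Phi_j$ to reduce the identity to the scalar conditions that the wave speeds are roots of a monic degree-$N$ polynomial in the $\mu_n$, and conclude by Vieta's formulas, recovering $\mu_{N+1-k}=\sigma_k$ exactly as in \eqref{3.11}. Your pole-counting sketch for the independence of the $\Phi_j^*\Phi_j$ is a slightly more detailed justification of a fact the paper simply asserts, so there is no substantive difference in approach.
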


\begin{proof}  Let $\Psi_j=\Phi_j^*\Phi_j$ be squared eigenfunctions and $c_j=-2\lambda_j$ be the wave speeds.
We deduce from \eqref{E-L} and \eqref{2.14} to have the following linear relation among $\Psi_j$
\begin{equation*}\label{3.1}
\sum_{j=1}^mc_j^{m-1}\Psi_j + \sum_{n=1}^m(-1)^{m-n+1}\mu_n
\sum_{j=1}^mc_j^{n-2}\Psi_j=0.
\end{equation*}
Due to the fact that $\Psi_j$ are linearly independent,
$\mu_n$ must satisfy the following system of linear algebraic equations:
\begin{equation*}\label{3.2}
\sum_{n=1}^m(-1)^{m-n}c_j^{n-1}\mu_n=c_j^{m}, j=1, 2, ..., m.
\end{equation*}
As a consequence, we see that for each $j=1,\dots, m$, we have
\[
(-c_j)^{m}+\sum_{n=1}^m\mu_{n}(-c_j)^{n-1}=0,
\]
which implies that $-c_j$ are the roots of the polynomial $x^m+\sum_{n=1}^m\mu_n x^{n-1}=0$.  Since $c_1<\cdots<c_m$, we obtain ~\eqref{eq:vieta} from Vieta's formula immediately.
\end{proof}

\subsection{Bi-Hamiltonian formation of \eqref{eq: BO}}\label{sec_2.4}

In viewing of \eqref{eq:BO hierarchy}, we can define the recursion operator from the following relations for the variational derivatives of conservation laws $H_n(u):H^{\frac{n-1}{2}}(\R)\rightarrow \R $ $(n\in \mathbb N)$ with respect to $u$,
 \begin{equation} \label{eq:recursion2}
\frac{\delta H_{n+1}(u)}{\delta u}=\mathcal{R}(u)\frac{\delta H_{n}(u)}{\delta u},
\end{equation}
unlike the KdV case, the recursion operator $\mathcal{R}(u)$ is implicit and should be understood from \eqref{eq:recursion operator}.
The adjoint operator of  $\mathcal{R}(u)$ is
\begin{equation}\label{eq:adjoint of R}
\mathcal{R}^{\star}(u)=\mathcal{J}\mathcal{R}(u)\mathcal{J}^{-1},
\end{equation}
and it is not difficult to see that the operators $\mathcal{R}(u)$ and $\mathcal{R}^{\ast}(u)$ satisfy
\begin{equation}\label{eq:R Rstar}
\mathcal{R}^{\star}(u)\mathcal{J}=\mathcal{J}\mathcal{R}(u).
\end{equation}
The above definitions of recursion operators are reasonable since $\mathcal{R}(u)$ maps the variational derivative of conservation laws of \eqref{eq: BO} onto the variational derivative of conservation laws, $\mathcal{R}^{\star}(u)$ maps infinitesimal generators of symmetries of \eqref{eq: BO} onto infinitesimal generators of symmetries.
The starting symmetry of \eqref{eq: BO} is $u_x$ \cite{FF81}, therefore, \eqref{eq:adjoint of R} is well-defined since
\[\big(\mathcal{R}^{\star}(u)\big)^n u_x=\mathcal{J}\big(\mathcal{R}(u)\big)^nH_1'(u)=\mathcal{J}\big(\mathcal{R}(u)\big)^nu, \quad n\in \mathbb N.\]
For future reference, we need to show the above definition of $\mathcal{R}(u)$ is unique and differentiable with respect to $u$. For KdV equation \eqref{KdV}, its recursion operator is explicit, the uniqueness and smoothness of which can be checked directly. In particular,  we consider \eqref{KdV} with $\delta=3$ and for functions defined on Schwartz space $\mathcal S(\R)$ for simplicity, the recursion operator of \eqref{KdV} is
$\mathcal{R}_K(u):=-\partial_x^2-\frac{2}{3}u-\frac{2}{3}\partial_x^{-1}u\partial_x$, then $\mathcal{R}'_K(u)=-\frac{2}{3}-\frac{2}{3}\partial_x^{-1}(\cdot\partial_x)$.
\begin{proposition} \label{pr2.0}
Given $u\in H^{k+1}(\R)$ with $k\geq0$, there exists a unique linear operator
$$\mathcal{R}(u): H^{k+1}(\R)\rightarrow H^{k}(\R),$$
such that \eqref{eq:recursion2} and \eqref{eq:R Rstar} hold true. Moreover, $\mathcal{R}(u)$ is differentiable with respect to $u$.
\end{proposition}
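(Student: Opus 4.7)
The plan is to combine the $2$d bi-Hamiltonian formulation from \eqref{eq:Hamilton operator} with the hierarchy identity \eqref{eq:BO hierarchy} to extract a closed $1$d expression for $\mathcal{R}(u)$, and then to read off the mapping property, uniqueness and differentiability from that expression. For existence I would peel off one factor of $\mathcal{R}_{12}$ from the hierarchy identity using the well-coupling condition \eqref{eq:well coupling}, so that $\mathcal{J}\frac{\delta H_{n+1}(u)}{\delta u}$ is expressed as the action of a $2$d operator on $\mathcal{J}\frac{\delta H_n(u)}{\delta u}$. Viewing the adjoint recursion operator $\mathcal{R}^{\star}_{12}=i\mathcal{u}^{-}_{12}H_{12}-\mathcal{u}^{+}_{12}$ as a composition of multiplication operators with the nonlocal projector $H_{12}$, the $x_2\to x_1$ reduction performed by $\delta(x_1-x_2)$ collapses $H_{12}$ into $H$, eliminates the $(\mathcal{J}^{(1)}_{12})^{-1}$ contribution, and yields a closed formula of the schematic form
\[
\mathcal{R}(u)=-H\partial_x+A(u)+B(u)\,\partial_x^{-1},
\]
where $A(u)$ and $B(u)$ are combinations of multiplication and Hilbert-transform operators depending linearly on $u$ and $u_x$. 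The constants in $A$ and $B$ are then pinned down by checking on the first two conservation laws, using $\frac{\delta H_1}{\delta u}=u$ and $\frac{\delta H_2}{\delta u}=-Hu_x-u^2$ computed directly from \eqref{mass} and \eqref{ener}.

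Once the closed formula is in hand, the mapping property $H^{k+1}(\R)\to H^k(\R)$ follows from one-dimensional Sobolev embedding (so that the $u$- and $u_x$-coefficients act boundedly by multiplication on $H^{k}$), the fact that the only derivative-losing term is $H\partial_x$, and the observation that $\partial_x^{-1}$ appears paired with one derivative so that no genuinely singular kernel arises. The adjoint formula \eqref{eq:adjoint of R} and the intertwining \eqref{eq:R Rstar} are then a direct algebraic computation using the skew-adjointness of $H$ recalled in Subsection \ref{sec_2.0}.

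For uniqueness, if $\widetilde{\mathcal{R}}(u)$ is another candidate, the difference $D:=\mathcal{R}(u)-\widetilde{\mathcal{R}}(u)$ intertwines $\partial_x$ through \eqref{eq:R Rstar} and annihilates every $\frac{\delta H_n}{\delta u}$ through \eqref{eq:recursion2}. The trace identity \eqref{2.10'}, combined with the linear independence of the squared eigenfunctions $\Phi_j^{\ast}\Phi_j$ and of the radiation contributions, forces the span of $\bigl\{\frac{\delta H_n(u)}{\delta u}\bigr\}_{n\geq 1}$ to be dense in $H^{k+1}(\R)$ for a dense set of potentials $u$, and continuity in $u$ of the explicit formula propagates $D\equiv 0$ to the whole space. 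Differentiability with respect to $u$ is then immediate, since every coefficient of $\mathcal{R}(u)$ is affine in $u$ and $u_x$, so that $u\mapsto\mathcal{R}(u)$ is smooth as a map from $H^{k+1}(\R)$ into $\mathcal{L}(H^{k+1},H^k)$.

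The main obstacle I expect is the reduction from the $2$d formulation to the closed $1$d form: the operators $H_{12}$ and $(\mathcal{J}^{(1)}_{12})^{-1}$ do not commute with the $\delta$-function reduction, and one has to verify that the formally singular $\partial_x^{-1}$ contributions actually assemble into bounded maps on the stated Sobolev scale. The density argument used for uniqueness is also delicate and may ultimately require invoking the completeness relation for squared eigenfunctions that is highlighted in the introduction as a central tool for the subsequent spectral analysis.
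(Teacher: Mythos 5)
Your existence step rests on producing a closed one--dimensional formula $\mathcal{R}(u)=-H\partial_x+A(u)+B(u)\partial_x^{-1}$ by restricting \eqref{eq:BO hierarchy} to the diagonal, and this is precisely what the paper (and the literature it cites) says is unavailable: the generalized Hilbert transform \eqref{eq:extended Hilbert } acts in the variable $x_1+x_2$ on genuinely two--variable objects such as $\mathcal{R}_{12}^n\cdot 1$, and it does not commute with the $\delta(x_1-x_2)$ reduction. That non--commutation is not a technical point to be checked afterwards; it is the structural reason the BO recursion operator is only implicit (see the discussion around \eqref{eq:recursion2}, \eqref{eq:recursion operator} and the Zakharov--Konopelchenko remark in the introduction). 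You name it as your ``main obstacle'' but give no way around it, so existence is not established. Moreover, pinning constants by matching $H_1'$ and $H_2'$ cannot yield \eqref{eq:recursion2} for \emph{all} $n$, which is the whole content of the proposition: for instance the simplest candidate consistent with $\mathcal{R}(u)u=-Hu_x-u^2$ (namely $A(u)=-u$, $B=0$) applied to $-Hu_x-u^2$ gives $-u_{xx}+uHu_x+2H(uu_x)+u^3$, which does not match $H_3'(u)$ computed from \eqref{conser2} with its $\tfrac43$ coefficients, and nothing in your argument verifies any higher level. Since your differentiability claim is read off from the affine coefficients of this formula, it falls with it. The paper's proof goes the other way: it never writes a closed formula, but identifies $\mathcal{R}(u)$ through the two--dimensional operator $\mathcal{R}_{12}$ via \eqref{eq:BO hierarchy}, obtains uniqueness by an induction over $n$ inside that identification, and gets differentiability from the decomposition $\mathcal{R}(u)\sim-H\partial_x+L(u)$ with a differentiable remainder $L(u)$.

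The uniqueness argument is also broken, and at exactly the potentials this paper cares about. By \eqref{2.12} and the trace formula \eqref{2.10'}, at a reflectionless potential $u=U^{(N)}$ every gradient is as in \eqref{2.14}: $H_n'(U^{(N)})$ is a linear combination of the $N$ squared eigenfunctions $\Phi_j^{\ast}\Phi_j$ for every $n$, so $\operatorname{span}\{H_n'(u):n\geq1\}$ is at most $N$--dimensional, the opposite of dense in $H^{k+1}(\R)$; the completeness relation for squared eigenfunctions concerns that eigenfunction system, not the set of gradients, and cannot repair this. In addition, \eqref{eq:R Rstar} adds little rigidity (given \eqref{eq:adjoint of R} it essentially expresses skew--adjointness of $\mathcal{J}\mathcal{R}(u)$), and the proposition asserts uniqueness at each \emph{fixed} $u$, so ``density of good potentials plus continuity in $u$'' is not a valid closing move: a competing operator $\widetilde{\mathcal{R}}(u)$ at a single $u$ need not belong to any family continuous in $u$, and the continuity you invoke is that of the explicit formula whose existence is the very point in question. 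Uniqueness must instead be extracted, as in the paper, from the way \eqref{eq:BO hierarchy} represents all flows simultaneously through $\mathcal{R}_{12}$, not from the gradient relations \eqref{eq:recursion2} alone.
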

\begin{proof} The idea is to relate the recursion operators $\mathcal{R}(u)$ and $\mathcal{R}_{12}$ \eqref{eq:recursion operator}. Suppose that $u\in\mathcal S(\R)$, then it reveals from \eqref{eq:BO hierarchy} and \eqref{eq:recursion2} that
\begin{eqnarray*}
&&\mathcal S(\R)\ni H'_{n+1}(u)=\frac{i}{2(n+1)}\mathcal{J}^{-1}\int_{\R}\delta(x_1-x_2)\mathcal{u}^{-}_{12}\mathcal{R}_{12}^{n+1}\cdot1\rmd x_2\\
&&=\mathcal{R}(u)H'_{n}(u)=\mathcal{R}(u)\frac{i}{2n}\mathcal{J}^{-1}\int_{\R}\delta(x_1-x_2)\mathcal{u}^{-}_{12}\mathcal{R}_{12}^{n}\cdot1\rmd x_2.
\end{eqnarray*}
The uniqueness of $\mathcal{R}(u)$ follows by an induction argument over $n$. Moreover, one infers that $\mathcal{R}(u)\sim-H\partial_x +L(u)$, where the higher order remainder term $L(u):\mathcal S(\R)\mapsto \mathcal S(\R)$ and which is differentiable. By a standard density argument, $\mathcal{R}(u)$ is also differentiable and $\mathcal{R}'(u)\sim L'(u)$.
\end{proof}

It will be shown in Section \ref{sec_4} that understanding the spectral information of the (adjoint) recursion operators $\mathcal{R}(u)$ and $\mathcal{R}^{\star}(u)$ is essential in proving the (spectral) stability of the BO multi-solitons.

We first observe that the differential equation \eqref{eq:stationary} verified by the soliton profile and the bi-Hamiltonian structure \eqref{eq:recursion2} imply that the $1$-soliton $Q_c(x-ct-x_0)$ with speed $c>0$ satisfies, for all $n\geq2$ and for any $t\in\R$, the following variational principle
\begin{eqnarray}\label{eq:1-sol variaprinciple}
&&H_{n+1}'(Q_c)+c H'_{n}(Q_c)=\mathcal{R}(Q_c)\big(H_{n}'(Q_c)+c H'_{n-1}(Q_c)\big)\nonumber\\&&=\cdot\cdot\cdot=\mathcal{R}^{n-1}(Q_c)\big(H_{2}'(Q_c)+c H'_{1}(Q_c)\big)=0,
\end{eqnarray}
\eqref{eq:1-sol variaprinciple} holds true since the functions $H_{n}'(Q_c)+c H'_{n-1}(Q_c)\in H^1(\R)$ which belongs to the domain of $\mathcal{R}(Q_c)$.
For future reference, we calculate here the quantities $H_j(Q_c)$  related to $1$-soliton profile $Q_c$. Instead of  applying the trace identity of $H_n$ \eqref{trace for H} directly, we multiply  \eqref{eq:1-sol variaprinciple} with $\frac{\rmd Q_c}{\rmd c}$, then for each $n$ one has
\[
\frac{\rmd H_{n+1}(Q_c)}{\rmd c}=-c\frac{\rmd H_{n}(Q_c)}{\rmd c}=\cdots=(-c)^n\frac{\rmd H_{1}(Q_c)}{\rmd c}=(-1)^n\pi c^{n},
\]
and therefore by inductions to have $\lim_{c\rightarrow0} H_{n}(Q_c)=0$ and
\begin{equation}\label{eq:jHamliton}
H_{n+1}(Q_c)=(-1)^n\frac{\pi}{n+1}c^{n+1}.
\end{equation}

Let us recall that the soliton  $Q_c(x-ct-x_0)$ \eqref{eq:so} is a solution of the BO equation. For simplicity, we denote $Q_c$ by $Q$. Then by \eqref{eq:recursion2}, we have
\begin{equation}\label{eq:recursion4}
 H'_{n+1}(Q)=\mathcal{R}(Q) H'_{n}(Q).
\end{equation}
To analyze the second variation of the actions,
we linearize the equation \eqref{eq:recursion2} to let $u=Q+\varepsilon z$, and obtain a relation between linearized operators $H''_{n+1}(Q)+cH''_{n}(Q)$ and $H''_{n}(Q)+cH''_{n-1}(Q)$ for all $n\geq2$. One has

\begin{proposition} \label{pr2.3} Suppose that $Q$ is a soliton profile of the BO equation with speed $c>0$, if $z\in H^{n}(\R)$ for $n\geq1$, then there holds
the following iterative operator identity
\begin{equation}\label{eq:recursion Hamiltonianre}
 \big(H''_{n+1}(Q)+cH''_{n}(Q)\big)z=\mathcal{R}(Q)\big(H''_{n}(Q)+cH''_{n-1}(Q)\big)z.
\end{equation}
\end{proposition}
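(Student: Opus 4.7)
The plan is to prove the identity by linearizing the bi-Hamiltonian relation \eqref{eq:recursion4} around $Q$ and exploiting the $1$-soliton variational principle \eqref{eq:1-sol variaprinciple} to eliminate the unwanted term coming from the derivative of the recursion operator.

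More precisely, fix $z\in H^n(\R)$ and set $u_\varepsilon := Q+\varepsilon z$. Applying \eqref{eq:recursion2} at level $n$ and at level $n-1$ to $u_\varepsilon$ yields
\begin{equation*}
H'_{n+1}(u_\varepsilon)=\mathcal{R}(u_\varepsilon)\,H'_{n}(u_\varepsilon),\qquad H'_{n}(u_\varepsilon)=\mathcal{R}(u_\varepsilon)\,H'_{n-1}(u_\varepsilon).
\end{equation*}
By Proposition \ref{pr2.0}, $\mathcal R(\cdot)$ is differentiable as a map on Sobolev spaces, and the variational derivatives $H_k'$ are likewise smooth. Hence we may differentiate each identity in $\varepsilon$ at $\varepsilon=0$, which gives the two linearizations
\begin{equation*}
H''_{n+1}(Q)\,z \;=\; \mathcal{R}'(Q)[z]\,H'_{n}(Q)\;+\;\mathcal{R}(Q)\,H''_{n}(Q)\,z,
\end{equation*}
\begin{equation*}
H''_{n}(Q)\,z \;=\; \mathcal{R}'(Q)[z]\,H'_{n-1}(Q)\;+\;\mathcal{R}(Q)\,H''_{n-1}(Q)\,z.
\end{equation*}

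Now multiply the second identity by $c$ and add it to the first; the $\mathcal{R}(Q)$--terms regroup into $\mathcal{R}(Q)\bigl[H''_n(Q)+cH''_{n-1}(Q)\bigr]z$, while the remaining contribution reads
\begin{equation*}
\mathcal{R}'(Q)[z]\,\bigl(H'_{n}(Q)+cH'_{n-1}(Q)\bigr).
\end{equation*}
By the $1$-soliton variational principle \eqref{eq:1-sol variaprinciple} this bracketed quantity vanishes identically, so the remainder is zero and we obtain precisely
\begin{equation*}
\bigl(H''_{n+1}(Q)+cH''_{n}(Q)\bigr)z=\mathcal{R}(Q)\bigl(H''_{n}(Q)+cH''_{n-1}(Q)\bigr)z,
\end{equation*}
which is the claimed identity.

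The only delicate point is functional-analytic: one must check that each object is in the right space for the manipulations to be legitimate. The derivative $\mathcal R'(Q)[z]$ acts on $H'_n(Q)$, which is smooth and Schwartz-class (being a polynomial expression in $Q$ and its derivatives, and $Q$ itself decays algebraically), so the expression is meaningful; and by Proposition \ref{pr2.0} the image of $H''_n(Q)+cH''_{n-1}(Q)$ applied to $z\in H^n(\R)$ lies in the domain of $\mathcal R(Q)$, so the right-hand side is well defined. This is really the main obstacle, because it forces the regularity hypothesis $z\in H^n(\R)$ and explains why the statement is indexed exactly that way. Once these mapping properties are in place, the computation above is essentially a one-line consequence of differentiating \eqref{eq:recursion2} and invoking \eqref{eq:1-sol variaprinciple}.
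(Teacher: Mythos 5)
Your proposal is correct and follows essentially the same route as the paper: Gateaux-differentiate the bi-Hamiltonian relation \eqref{eq:recursion2} at $u=Q$ (using the differentiability of $\mathcal{R}(\cdot)$ from Proposition \ref{pr2.0}), combine the levels $n$ and $n-1$ with coefficient $c$, and annihilate the leftover term $\big(\mathcal{R}'(Q)z\big)\big(H_n'(Q)+cH_{n-1}'(Q)\big)$ via the one-soliton variational principle \eqref{eq:1-sol variaprinciple}. No substantive difference from the paper's argument.
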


\begin{proof} Let $u=Q+\varepsilon z$, by \eqref{eq:recursion2} and the definition of Gateaux derivative, one has
\begin{equation}\label{eq:recursion Hamiltonian2}
 H''_{n+1}(Q)z=\mathcal{R}(Q)(H''_{n}(Q)z)+\big(\mathcal{R}'(Q)z\big)(H_n'(Q)),
\end{equation}
then by \eqref{eq:recursion Hamiltonian2}
\[
\big(H''_{n+1}(Q)+cH''_{n}(Q)\big)z=\mathcal{R}(Q)\bigg(\big(H''_{n}(Q)+cH''_{n-1}(Q)\big)z\bigg)+\big(\mathcal{R}'(Q)z\big)\big(H_n'(Q)+cH_{n-1}'(Q)\big).
\]
 Notice that from Proposition \ref{pr2.0},  $\mathcal{R}'(Q)$ is well-defined, then \eqref{eq:recursion Hamiltonianre} follows directly from \eqref{eq:1-sol variaprinciple}.
\end{proof}

\section{Spectral Analysis} \label{sec_4}
\setcounter{equation}{0}

 Let $U^{ (m)}(t,x)$  be the BO $m$-solitons and $ U^{ (m)}(x)=U^{ (m)}(0,x)$ be the $m$-solitons profiles.  In this Section, we will use the subscript {\em{od}} to denote space of odd functions and the subscript {\em{ev}} to denote space of even functions.
A detailed spectral analysis of the linearized operator around $m$-solitons $\mathcal {L}_m$ (defined in  \eqref{eq:linearized n-soliton operator}) will be presented by employing the (adjoint) recursion operators defined in section \ref{sepecsec2}.

 The combination of two main
arguments allows to have the spectral information of $\mathcal {L}_m$. First, it was shown
that a form of iso-spectral property holds for linearized operators $\mathcal L_m$
around multi-solitons $U^{ (m)}(t,x)$, in the sense that the inertia (i.e. the number
of negative eigenvalues and the dimension of the kernel) is preserved along the
time evolution. Second, at large time, the linearized operator can be
viewed as a composition of several decoupled linearized operators
around each of the soliton profiles composing the multi-soliton, and
the spectrum of  linearized operator around the multi-solitons will converge to
the union of the spectra of the linearized operators around each
solitons.

More precisely, the linearized operators around the multi-solitons
fit in the framework of  Theorem 3 in \cite{LN}, we conclude that the inertia $in(\mathcal {L}_m(t))$ of $\mathcal {L}_m(t)$
is independent of $t$. Therefore, we can choose a convenient $t$ to calculate the inertia and
the best thing we can do is to calculate the inertia $in(\mathcal {L}_m(t))$ as $t$ goes to $\infty$. In particular,
the $m$-solitons $U^{ (m)}(t,x)$ splits into $m$ one-solitons $Q_{c_j}(x-c_jt-x_j)$ far apart \eqref{eq:asympotic behavior}. Then as $t$ goes to $\infty$, the spectrum $\sigma(\mathcal {L}_m(t))$ of $\mathcal {L}_m(t)$ converges
to the union of the spectrum $\sigma(L_{m,j})$ of $L_{m,j}:=I_m''(Q_{c_j})$. In this section, we show that the inertia of the linearized operator $\mathcal {L}_m$ related to the $m$-solitons $U^{ (m)}$ has exactly $[\frac{m+1}{2}]$ negative eigenvalues and the dimension of the null space equals to $m$, namely, $in(\mathcal {L}_m(t))=([\frac{m+1}{2}],m)$.
This result follows from an alternative inertia property of operators $L_{m,j}$:\\
--for $j=2k-1$ odd, $in(L_{m,j})=(1,1)$, i.e., $L_{m,2k-1}$ has exactly one negative eigenvalue;\\
--for $j=2k$ even, $in(L_{m,j})=(0,1)$, i.e., $L_{m,j2k}\geq0$ is positive.

In view of the expression of $L_{m,j}$, it is the summation of the operators $$H_{n+1}''(Q_{c_j})+c_jH_{n}''(Q_{c_j}) \quad \text{for}\ n=1,2,\cdot\cdot\cdot,m.$$ In particular, from Proposition \eqref{pr2.3}, it can be factorized in the following way
\begin{eqnarray}\label{formula of L}
L_{m,j}=\sum_{n=1}^m\sigma_{j,m-n}\bigg(H_{n+1}''(Q_{c_j})+c_jH_{n}''(Q_{c_j})\bigg)=\left(\prod_{k=1,k\neq j}^{m}(\mathcal R(Q_{c_j})+c_k)\right)\bigg(H_2''(Q_{c_j})+c_j H_1''(Q_{c_j})\bigg),
\end{eqnarray}
where $\sigma_{j,k}$ are the elementally symmetric functions of $c_1,c_2,\cdot\cdot\cdot,c_{j-1},c_{j+1},\cdot\cdot\cdot,c_m$ as follows,
$$\sigma_{j,0}=1, \ \sigma_{j,1}=\sum_{l=1,l\neq j}^mc_l, \ \sigma_{j,2}=\sum_{l<k,k,l\neq j}c_lc_k, ...,
\ \sigma_{j,m}=\prod_{l=1,l\neq j}^mc_l.$$

\subsection{The spectrum of $L_{1,c}$}
Let us deal with the linearized operator around one soliton profile $Q_{c}$, the associated linearized operator is,
\begin{equation}\label{eq:linearized operator}
\mathcal {L}_1=L_{1,c}=H_{2}''(Q_c)+cH_{1}''(Q_c)=-H\partial_x+c-2Q_c.
\end{equation}
 It is the purpose of this subsection to give an account of the spectral analysis for the operator $L_{1,c}$. We view $L_{1,c}$ as an unbounded, self-adjoint operator on $L^2(\R)$ with domain $H^1(\R)$, we refer to \cite{BBSDB83,HZ22} for some details of the following spectral analysis.

Using the fact that $Q_1$ decays to zero at infinity and Kato-Rellich's theorem, we know that the essential spectrum of $L_{1,1}$ is  $[1,+\infty)$. By differentiating \eqref{eq:stationary} with respect to $x_0$ and with respect to $c$, we obtain for normalized wave speed $c=1$,
\begin{equation}\label{eq:eigen kernel}
L_{1,1}Q'_1=0, \quad L_{1,1}(Q_1+xQ'_1)=-Q, \ \eta_0:=\frac{1}{\sqrt{\pi}}Q'_1,
\end{equation}
which show that $0$ is a discrete eigenvalue. It is inferred form \cite{BBSDB83}  that the other two discrete eigenvalues of $L_{1,1}$ and the associated normalized eigenfunctions are given by:
\begin{eqnarray}
&&\lambda_{-}=-\frac{1+ \sqrt{5}}{2}, \ \eta_{-}=\Lambda_-\big(2Q_1+(1+ \sqrt{5})Q_1^2\big),\  L_{1,1}\eta_{-}=\lambda_{-}\eta_{-},\label{ pos eigen}\\
&&\lambda_{+}=\frac{\sqrt{5}-1}{2}, \ \eta_{+}=\Lambda_+\big(2Q_1+(1- \sqrt{5})Q_1^2\big),\  L_{1,1}\eta_{+}=\lambda_{+}\eta_{+},\label{nega eigen}\\
&& \Lambda_{\pm}:=\frac{\big(1\pm\sqrt{5}\big)\big(\sqrt{5}\pm 2\big)^{\frac1 2}}{4 (\sqrt{5}\pi)^{\frac1 2}}.\nonumber
\end{eqnarray}
We can see that $1$ is also an eigenvalue. The corresponding eigenfunction is
\begin{equation}\label{eq:eigen 1}
\eta_1(x)=\frac{1}{\sqrt{\pi}}\big(Q'_1+xQ_1\big),\ L_{1,1}\eta_1=\eta_1.
\end{equation}

Now, we consider generalized eigenfunctions. For $\lambda>0$, let $\eta(x,\lambda)$ satisfy $L_{1,1}\eta=(\lambda+1)\eta$ with $\eta$ bounded as $x\rightarrow\pm\infty$. By a standard approach, we represent $\eta$ in the form
\begin{equation}\label{eq:eta repre}
\eta=\eta^{(+)}+\eta^{(-)},
\end{equation}
where $\eta^{(+)}(z)$ is analytic in the upper half complex plane and bounded as $\imp z\rightarrow+\infty$, whilst $\eta^{(-)}(z)$ is analytic in the lower half complex plane and bounded as $\imp z\rightarrow-\infty$. Since $L_{1,1}$ is real and the potential $Q_1(z)=Q_1^\ast(z^\ast)$, we can presume that
\begin{equation}\label{eq:eta decom}
\psi(z,\lambda)=\eta^{(+)}(z,\lambda)=\big(\eta^{(-)}(z^\ast,\lambda)\big)^\ast.
\end{equation}
By \eqref{Hul} and substituting  \eqref{eq:eta repre} into $L_1\eta=(\lambda+1)\eta$, we have
\[
i\eta^{(+)}_z-i\eta^{(-)}_z+\big(2Q_1(z)+\lambda\big)\big(\eta^{(+)}+\eta^{(-)}\big)=0,
\]
which by \eqref{eq:eta decom} is equivalent to
\[
i\psi_z+\big(2Q_1(z)+\lambda\big)\psi=0,
\]
the solution of which is
\[\psi(z)=\frac{1}{\sqrt{2\pi}}\frac{z-i}{z+i}e^{i\lambda z}.\]
The generalized eigenfunctions of $L_{1,1}$ is thus given by \eqref{eq:eta repre} and \eqref{eq:eta decom}, the explicit formula is
\[
\eta(x,\lambda)=\sqrt{\frac{2}{\pi}}\frac{\big(x^2-1\big)\cos(\lambda x)+2x \sin(\lambda x)}{x^2+1}.
\]

For $j,k\in \sigma:=\{-,0,+,1\}$, the associated four functions $\eta_\sigma (x)$ defined in \eqref{eq:eigen kernel}, \eqref{ pos eigen}, \eqref{nega eigen} and \eqref{eq:eigen 1}, combining with the generalized eigenfunctions $\psi(x,\lambda)$ \eqref{eq:eta decom}, there holds the following $L^2$-inner product properties:
\begin{eqnarray}\label{completeness in L22}
&&\langle \eta_j,\eta_k\rangle=\delta_{jk},\nonumber\\
&&\langle\psi(\cdot,\lambda),\psi^\ast(\cdot,\lambda')\rangle=\delta(\lambda-\lambda'),\ \ \langle\psi(\cdot,\lambda),\eta_j\rangle=0,\nonumber\\
&&\int_0^{+\infty}\bigg(\psi(x,\lambda)\psi^\ast(y,\lambda)+\psi^\ast(x,\lambda)\psi(y,\lambda)\bigg)\rmd \lambda+\sum_{j\in\sigma}\eta_j(x)\eta_j(y)=\delta(x-y).\label{completeness in L2}
\end{eqnarray}
\eqref{completeness in L2} means the completeness of the implied eigenfunction expansion in $L^2(\R)$. In particular, for any function $f\in L^2(\R)$, one can decompose which into the above basis as follows:
\begin{eqnarray} \label{decomposition of z1}
&& f(x)=\int_0^{+\infty}\bigg(\tilde{\alpha}(\lambda)\psi(x,\lambda)+\tilde{\alpha}^\ast(\lambda)\psi^\ast(x,\lambda)\bigg)\rmd \lambda +\tilde{\alpha}_j\eta_j(x) ,\\
&& \tilde{\alpha}(\lambda):=\langle f,\psi^\ast(\lambda)\rangle,\ \ \tilde{\alpha}_j:=\langle f,\eta_j(\lambda)\rangle,\  j\in \sigma=\{-,0,+,1\}.\nonumber
\end{eqnarray}

To obtain the spectrum of the operator $L_{m,j}$ \eqref{formula of L}, let us consider the spectral analysis of the linearized operators
\begin{eqnarray}\label{formula of Ln}
L_n:=H''_{n+1}(Q)+cH''_{n}(Q),
\end{eqnarray} for all integers $n\geq1$. Here we write for simplicity $Q_c$ by $Q$ in the rest of this section. It is nature to consider the quadratic form $\langle L_nz,z\rangle$ with the decomposition of $z(x)$ in \eqref{decomposition of z1}. However, it is quite involved as the eigenfunctions of the operator $L_1=L_{1,c}$ \eqref{eq:linearized operator}  need not to be the eigenfunctions of $L_n$ for $n\geq2$. Our main ingredient part of the spectral analysis of $L_n$ is the observation that $JL_n$ share the same eigenfunctions of $JL_1$. To deal with this spectrum problem, the core is the following operator identities related to the recursion operator $\mathcal{R}(Q)$ and the adjoint recursion operator $\mathcal{R}^{\star}(Q)$ (see \eqref{eq:adjoint of R}).

\begin{lemma} \label{le3.5} The recursion operator $\mathcal{R}(Q)$, the adjoint recursion operator $\mathcal{R}^{\star}(Q)$ and the linearized operator $L_n$ for all integers $n\geq1$ satisfy the following operator identities.
\begin{eqnarray}
&&L_n\mathcal{J}\mathcal{R}(Q)=\mathcal{R}(Q)L_n\mathcal{J}, \label{operator identity1}\\
&&\mathcal{J}L_n\mathcal{R}^{\star}(Q)=\mathcal{R}^{\star}(Q)\mathcal{J}L_n,\label{operator identity2}
\end{eqnarray}
where $\mathcal{J}$ is the operator $\partial_x$.
\end{lemma}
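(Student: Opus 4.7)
The plan is to reduce both operator identities of Lemma \ref{le3.5} to the single commutation $L_n \mathcal{R}^\star(Q) = \mathcal{R}(Q) L_n$, and then to deduce this commutation from Proposition \ref{pr2.3} together with the self-adjointness of $L_n$. Indeed, \eqref{operator identity1} and \eqref{operator identity2} are equivalent to each other (and to the above commutation) via the well-coupling \eqref{eq:R Rstar}: multiplying $L_n \mathcal{R}^\star(Q) = \mathcal{R}(Q) L_n$ on the right by $\mathcal{J}$ and invoking $\mathcal{R}^\star(Q)\mathcal{J} = \mathcal{J}\mathcal{R}(Q)$ produces \eqref{operator identity1}, while multiplying on the left by $\mathcal{J}$ produces \eqref{operator identity2}. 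So it suffices to establish the commutation.

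The key observation I will use is that $\mathcal{R}^\star(u)$, defined in \eqref{eq:adjoint of R} by conjugation with $\mathcal{J}$, in fact coincides with the ordinary $L^2$-adjoint of $\mathcal{R}(u)$. Writing $\mathcal{R}(u) = \mathcal{J}^{-1}\mathcal{K}(u)$ with $\mathcal{K}(u) := \mathcal{J}\mathcal{R}(u)$ the second Hamiltonian operator, both $\mathcal{J}$ and $\mathcal{K}(u)$ are skew-adjoint in $L^2$, so a short computation yields
\[
(\mathcal{J}^{-1}\mathcal{K}(u))^{\ast} = \mathcal{K}(u)^{\ast}(\mathcal{J}^{-1})^{\ast} = (-\mathcal{K}(u))(-\mathcal{J}^{-1}) = \mathcal{J}\mathcal{R}(u)\mathcal{J}^{-1} = \mathcal{R}^\star(u).
\]

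With this identification at hand, the commutation is immediate. Applying Proposition \ref{pr2.3} at level $n+1 \geq 2$ gives $L_{n+1} = \mathcal{R}(Q) L_n$ for every $n \geq 1$. Since each $L_m$ is self-adjoint (as it arises as the Hessian of $H_{m+1} + cH_m$ at $Q$), taking the $L^2$-adjoint of this identity yields
\[
L_{n+1} = L_{n+1}^{\ast} = (\mathcal{R}(Q) L_n)^{\ast} = L_n \mathcal{R}(Q)^{\ast} = L_n \mathcal{R}^\star(Q),
\]
and equating the two expressions for $L_{n+1}$ gives the desired commutation $\mathcal{R}(Q) L_n = L_n \mathcal{R}^\star(Q)$ for every $n \geq 1$.

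The main obstacle I anticipate is justifying the adjoint identification $\mathcal{R}(u)^{\ast} = \mathcal{R}^\star(u)$ rigorously, since $\mathcal{R}(u)$ is a nonlocal unbounded operator whose precise $L^2$-domain is only implicitly characterized through Proposition \ref{pr2.0}. One would need to confirm that $\mathcal{K}(u) = \mathcal{J}\mathcal{R}(u)$ is genuinely skew-adjoint on an appropriate dense core such as $\mathcal{S}(\R)$, for instance by tracing the computation back to the explicit two-dimensional form \eqref{eq:Hamilton operator} of the Hamiltonian operators, and then to transport the identity to $\mathcal{R}(u)^{\ast}$ via $\mathcal{J}^{-1}$. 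Modulo this technicality, the proof is a direct consequence of Proposition \ref{pr2.3} and the bi-Hamiltonian structure.
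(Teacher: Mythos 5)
Your proposal is correct and follows essentially the same route as the paper: both reduce the lemma to the commutation $\mathcal{R}(Q)L_n=L_n\mathcal{R}^{\star}(Q)$, obtained from the self-adjointness of $L_{n+1}=\mathcal{R}(Q)L_n$ (Proposition \ref{pr2.3}), and then transfer it to the two stated identities via the intertwining relation $\mathcal{R}^{\star}(Q)\mathcal{J}=\mathcal{J}\mathcal{R}(Q)$. Your only addition is to spell out (and flag the domain issues for) the identification of $\mathcal{R}^{\star}(Q)$ with the $L^2$-adjoint of $\mathcal{R}(Q)$, which the paper uses implicitly through its definition \eqref{eq:adjoint of R}.
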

\begin{proof}
We need only to prove \eqref{operator identity2}, since one takes the adjoint operation on \eqref{operator identity2} to have \eqref{operator identity1}. Notice that from
Proposition \ref{pr2.3}, one has that the operator $\mathcal{R}(Q)L_n=L_{n+1}$ is self-adjoint. This in turn implies that $$(\mathcal{R}(Q)L_n)^\star=\mathcal{R}(Q)L_n=L_n\mathcal{R}^\star(Q),$$
On the other hand, in view of \eqref{eq:R Rstar}, one has
$$
\mathcal{J}L_n\mathcal{R}^\star(Q)=\mathcal{J}\mathcal{R}(Q)L_n=\mathcal{R}^\star(Q)\mathcal{J}L_n,
$$
as the advertised result in the lemma.
\end{proof}

\begin{remark}
Types of \eqref{operator identity1} and \eqref{operator identity2} hold for any solutions of the BO equation. In particular, let $U^{ (m)}$ be the BO $m$-soliton profile and $\mathcal{L}_m$ be the second variation operator defined in \eqref{eq:linearized n-soliton operator}. Then it is easy to verify that (similar to  Lemma \ref{le3.5}) the following operator identities hold true
\begin{eqnarray}
&&\mathcal{L}_m\mathcal{J}\mathcal{R}(U^{ (m)})=\mathcal{R}(U^{ (m)})\mathcal{L}_m\mathcal{J},\label{operator identity3}\\
&&\mathcal{J}\mathcal{L}_m\mathcal{R}^{\star}(U^{ (m)})=\mathcal{R}^{\star}(U^{ (m)})\mathcal{J}\mathcal{L}_m.\label{operator identity4}
\end{eqnarray}
\end{remark}

An immediate consequence of the factorization results \eqref{operator identity1} and \eqref{operator identity2} is that the (adjoint) recursion operator $\mathcal{R}(Q)$($\mathcal{R}^\star(Q)$) and $L_n\mathcal{J}$ ($\mathcal{J}L_n$) are commutable.  It then turns out  that the operators $\mathcal{J}L_n$ and $\mathcal{R}^\star(Q)$ share the same eigenfunctions, and  $L_n\mathcal{J}$ shares the same eigenfunctions with the recursion operator $\mathcal{R}(Q)$. It will be possible to derive the precise eigenvalues of operators $L_n\mathcal{J}$ and $\mathcal{J}L_n$ by analyzing the asymptotic behaviors of the corresponding eigenfunctions.

Our approach for the spectral analysis of the linearized operator $L_n$ is as follows.  Firstly, we derive the spectrum of the operator $\mathcal{J}L_n$, which is more easier than to have the spectrum of $L_n$. The idea is motivated by \eqref{operator identity2} to reduce to the spectrum of the  adjoint recursion operator $\mathcal{R}^\star(Q)$. We then  show that the eigenfunctions of $\mathcal{R}^\star(Q)$ ($\mathcal{J}L_n$) plus a generalized kernel of $\mathcal{J}L_n$ form an orthogonal basis in $L^2(\R)$, which can be viewed as a completeness relation. Finally, we calculate the quadratic form $\langle L_nz,z\rangle$ with function $z$ has a decomposition in the above basis, and the inertia of $L_n$ can be computed directly.

\subsection{The spectrum of the recursion operator around the BO one soliton}

The spectrum of the recursion operator $\mathcal{R}(Q)$ and its adjoint operator $\mathcal{R}^\star(Q)$ are essential to analyze the linearized operator $L_n$ defined in \eqref{formula of Ln}. Note that the recursion operators are nonlocal and  even not explicit, which are major obstacles to study them directly. However, by employing the properties of the squared eigenfunctions of the eigenvalue problem \eqref{space lax}, one could have the following result.

\begin{lemma} \label{le3.6} The recursion operator $\mathcal{R}(Q)$ defined in $L^2(\R)$ with domain $H^{1}(\R)$  has only one discrete eigenvalue $-c$  associated with the eigenfunction $Q$, the essential spectrum is the interval $[0,+\infty)$, and the corresponding eigenfunctions do not have spatial decay and not in $L^2(\R)$. Moreover, the kernel of $\mathcal{R}(Q)$ is spanned by $\big(N\bar{N}^\ast\big)(x,0)$ where $N(x,\lambda)$ and  $\bar{N}(x,\lambda)$ are defined in \eqref{phi3} and \eqref{phi2}.
\end{lemma}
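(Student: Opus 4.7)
The plan is to diagonalize $\mathcal{R}(Q)$ using the squared eigenfunctions produced by the Lax pair \eqref{2.1}, which is the natural spectral basis for recursion operators of integrable systems. The mechanism is that $\mathcal{R}(Q)$ transports $H'_n(Q)$ into $H'_{n+1}(Q)$, while formulas \eqref{2.14} and \eqref{2.17} express these same variational derivatives as superpositions of the discrete squared eigenfunction $\Phi_1^*\Phi_1$ and the continuous squared eigenfunction $(MN^*)(x,\lambda)$. Matching both sides of this recursion coefficient-wise will force these squared eigenfunctions to be eigenfunctions of $\mathcal{R}(Q)$ with spectral parameter read directly off the scattering data.

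First, I would verify the isolated discrete eigenvalue by applying the $n=1$ case of \eqref{eq:1-sol variaprinciple}: since $H'_1(Q) = Q$ and $H'_2(Q) = \mathcal{R}(Q) Q$, the identity $H'_2(Q) + c H'_1(Q) = 0$ immediately yields $\mathcal{R}(Q) Q = -cQ$. Because $Q \in H^1(\R)$, this produces $-c$ as a bona fide discrete eigenvalue with eigenfunction $Q$. Invoking \eqref{2.16}, the same relation can be recast as $\mathcal{R}(Q)(\Phi_1^*\Phi_1) = 2\lambda_1 (\Phi_1^*\Phi_1)$, which is the pattern to be extended to the continuous part.

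Next, I would proceed by induction on $n$ in $\mathcal{R}(Q) H'_n(Q) = H'_{n+1}(Q)$, using the trace expansion \eqref{2.10'} together with $M = \bar N$ (valid for the reflectionless soliton potential) to deduce the continuous diagonalization
\begin{equation*}
\mathcal{R}(Q)(\bar{N} N^*)(\cdot,\lambda) = 2\lambda\,(\bar{N} N^*)(\cdot,\lambda), \qquad \lambda \geq 0,
\end{equation*}
and, by complex conjugation, the same relation for $(N\bar N^*)$. The asymptotics \eqref{2.N}--\eqref{2.M} then show that $(N\bar N^*)(x,\lambda) \to e^{i\lambda x}$ as $x \to +\infty$ and $\to e^{i\lambda x}/\Gamma(\lambda)$ as $x \to -\infty$, so these generalized eigenfunctions are bounded but never in $L^2(\R)$; this fills the essential spectrum $[0,+\infty)$ with non-decaying eigenfunctions. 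Setting $\lambda=0$ picks out the kernel generator $(N\bar N^*)(x,0)$, and completeness of the squared eigenfunction basis (to be imported from the IST literature, e.g.\ \cite{KLM99}) rules out any additional discrete eigenvalues besides $-c$.

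The main obstacle will be the continuous-spectrum diagonalization, because $\mathcal{R}(Q)$ is only defined implicitly through the bi-Hamiltonian structure and the two-dimensional operator $\mathcal{R}_{12}$ of \eqref{eq:recursion operator}, so direct calculation is out of reach. I plan to bypass the explicit computation by exploiting the uniqueness supplied by Proposition \ref{pr2.0}: for each $n$ the scattering-data expansion of $H'_n(Q)$ prescribes the value of $\mathcal{R}(Q)$ on the spanning set of squared eigenfunctions, and density together with uniqueness then pins the operator down on all of $L^2(\R)$. A secondary technicality is that the coefficient matching must be made rigorous at $\lambda = 0$, the bottom of the essential spectrum, where $\Gamma(\lambda)\to 1$ and the continuous contribution formally absent for reflectionless potentials must be handled with care.
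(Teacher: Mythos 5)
Your treatment of the discrete eigenvalue is fine and in fact more direct than the paper's: from $H_1'(Q)=Q$, $\mathcal{R}(Q)Q=H_2'(Q)$ and the soliton equation \eqref{eq:stationary} one indeed gets $\mathcal{R}(Q)Q=-cQ$ immediately. The genuine gap is in your main step, the continuous-spectrum diagonalization. For the soliton potential the reflection coefficient vanishes, $\beta(\lambda)\equiv 0$, so the scattering expansion of every variational derivative contains \emph{no} continuous component: by \eqref{2.14} (with one bound state) one has $H_n'(Q)=(-1)^{n+1}c^{n-1}Q$ for all $n$, i.e.\ all the $H_n'(Q)$ lie in the one-dimensional span of $Q$. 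Consequently the recursion $\mathcal{R}(Q)H_n'(Q)=H_{n+1}'(Q)$, however far you iterate it, only reproduces $\mathcal{R}(Q)Q=-cQ$ and carries no information whatsoever about the action of $\mathcal{R}(Q)$ on $(N\bar N^{\ast})(\cdot,\lambda)$ for $\lambda>0$. Your "coefficient matching" therefore has nothing to match on the continuous side (the difficulty you flag at $\lambda=0$ is in fact present for every $\lambda$), and the appeal to the uniqueness in Proposition \ref{pr2.0} cannot repair this: uniqueness pins down an operator only once its defining relations determine it on a dense set, while the data you propose to use determine it only on $\mathrm{span}\{Q\}$. So the key identity $\mathcal{R}(Q)(N\bar N^{\ast})(\cdot,\lambda)=2\lambda (N\bar N^{\ast})(\cdot,\lambda)$ is not established by your argument, and with it the claims about the essential spectrum $[0,+\infty)$ and the kernel at $\lambda=0$ are left unproved.

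What the paper does instead, and what your plan is missing, is a computation with the \emph{explicit two-dimensional} recursion operator: one shows, using the analyticity of the Jost functions in the half planes and the properties \eqref{Hul}, \eqref{Hu2} of the generalized Hilbert transform $H_{12}$, the identities
\begin{equation*}
\bigl(Q_{12}^{+}-iQ_{12}^{-}H_{12}\bigr)\Bigl(Q_{12}^{-}\bigl(N(x_1,\lambda)\bar N^{\ast}(x_2,\lambda)\bigr)\Bigr)
=-4\lambda\, Q_{12}^{-}\bigl(N(x_1,\lambda)\bar N^{\ast}(x_2,\lambda)\bigr),
\end{equation*}
and their analogues for $N^{\ast}\bar N$ and $\Phi_1\Phi_1^{\ast}$, which say that the products $N(x_1,\lambda)\bar N^{\ast}(x_2,\lambda)$ are eigenfunctions of $\mathcal{R}_{12}(Q)$ with eigenvalue $4\lambda$. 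Descending to the one-dimensional operator through the hierarchy representation \eqref{eq:BO hierarchy} (which carries the factor $\tfrac12$) then yields $\mathcal{R}(Q)(N\bar N^{\ast})=2\lambda(N\bar N^{\ast})$ and $\mathcal{R}(Q)Q=-cQ$, after which the asymptotics \eqref{2.2}, \eqref{2.N}, \eqref{phi2}, \eqref{phi3} give the non-decay of the generalized eigenfunctions, the essential spectrum $[0,+\infty)$, and the kernel element $(N\bar N^{\ast})(x,0)=|N(x,0)|^2\notin L^2(\R)$. If you want to salvage your outline, this two-dimensional computation (or some equivalent way of evaluating $\mathcal{R}(Q)$ off the span of $Q$) has to be supplied; it is the heart of the lemma, not a technicality.
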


\begin{proof} Consider the Jost solutions of the spectral problem \eqref{space lax} with the potential $u=Q$ and the asymptotic expressions in \eqref{Jost be1}, \eqref{Jost be2}, \eqref{2.N}, \eqref{2.barN} and \eqref{2.M}. In this case, \eqref{space lax} possesses only one discrete eigenvalue $\lambda_1=-\frac{c}{2}<0$ which generates the soliton profile $Q$. The key ingredient in the analysis is to find the eigenvalues of $\mathcal{R}_{12}(Q)$ in \eqref{eq:recursion operator} around the soliton profile $Q$, as $\mathcal{R}(Q)$ is not explicit. It is then found that (using the properties of the generalized Hilbert transform presented in Subsection \ref{sec_2.0} and $Q_{12}^-Q_{12}^+=Q_{12}^+Q_{12}^-$) for $\lambda>0$, there holds the following
 \begin{eqnarray}
&&\bigg(Q_{12}^+-iQ_{12}^-H_{12}\bigg)\bigg(Q_{12}^-(N(x_1,\lambda)\bar{N}^\ast(x_2,\lambda))\bigg)=-4\lambda Q_{12}^-(N(x_1,\lambda)\bar{N}^\ast(x_2,\lambda)),\label{eq:eigen ident1}\\
&&\bigg(Q_{12}^+-iQ_{12}^-H_{12}\bigg)\bigg(Q_{12}^-(N^\ast(x_1,\lambda)\bar{N}(x_2,\lambda))\bigg)=-4\lambda Q_{12}^-(N^\ast(x_1,\lambda)\bar{N}(x_2,\lambda)),\label{eq:eigen ident2}\\
&&\bigg(Q_{12}^+-iQ_{12}^-H_{12}\bigg)\bigg(Q_{12}^-(\Phi_1(x_1)\Phi_1^\ast(x_2))\bigg)=-4\lambda Q_{12}^-(\Phi_1(x_1)\Phi_1^\ast(x_2)),\label{eq:eigen ident3}
\end{eqnarray}
where $\bar{N}^\ast(x_2),\Phi_1^\ast(x_2)$ satisfy the adjoint eigenvalue problem of \eqref{space lax} with  potential $u=Q$ ( i.e., replace $i,x$  by $-i,x_2$ in\eqref{space lax}). Recall that $Q_{12}^{\pm}=Q(x)\pm Q(x_2)+i(\partial_{x}\mp\partial_{x_2})$ defined similarly as in \eqref{eq:functions}. Then  \eqref{eq:eigen ident1},\eqref{eq:eigen ident2} and \eqref{eq:eigen ident3} reveal that
\begin{eqnarray}
&&\mathcal{R}_{12}(Q)\big(N(x_1,\lambda)\bar{N}^\ast(x_2,\lambda)\big)=4\lambda\big(N(x_1,\lambda)\bar{N}^\ast(x_2,\lambda)\big),\label{eq:eigen ident4}\\
&&\mathcal{R}_{12}(Q)\big(N^\ast(x_1,\lambda)\bar{N}(x_2,\lambda)\big)=4\lambda\big(N^\ast(x_1,\lambda)\bar{N}(x_2,\lambda)\big),\label{eq:eigen ident5}\\
&&\mathcal{R}_{12}(Q)\big(\Phi_1(x_1)\Phi_1^\ast(x_2)\big)=4\lambda_1\big(\Phi_1(x_1)\Phi_1^\ast(x_2)\big)=-2c\big(\Phi_1(x_1)\Phi_1^\ast(x_2)\big).\label{eq:eigen ident6}
\end{eqnarray}
 In view of the extra factor $\frac12$ in the bi-Hamiltonian structure \eqref{eq:BO hierarchy}, one sees that
 the squared eigenfunctions $N\bar{N}^\ast$, $N^\ast\bar{N}$ satisfy
\begin{eqnarray}
&&\mathcal{R}(Q)\big(N\bar{N}^\ast\big)(x,\lambda)=2\lambda\big(N\bar{N}^\ast\big)(x,\lambda),\quad \text{for} \ \lambda>0, \label{eq:Reigen} \\ &&\mathcal{R}(Q)\big(N^\ast\bar{N}\big)(x,\lambda)=2\lambda\big(N^\ast\bar{N}\big)(x,\lambda),\quad \text{for} \ \lambda>0, \label{eq:Reigen0} \\
&&\mathcal{R}(Q)\big(\Phi_1\Phi_1^\ast\big)(x)=2\lambda_1\big(\Phi_1\Phi_1^\ast\big)(x)=-c\big(\Phi_1\Phi_1^\ast\big)(x).\label{eq:R eigen1}
\end{eqnarray}
\eqref{eq:R eigen1}  and $\Phi_1\Phi_1^\ast=\frac{c}{2}Q$ reveal that $\mathcal{R}(Q)Q=-cQ$. Moreover, if we differentiate \eqref{eq:R eigen1} with respect to $c$, it follows that there holds
\begin{equation*}\label{eq:generalized kernel}
\mathcal{R}(Q)\frac{\partial Q}{\partial c}=-Q-c\frac{\partial Q}{\partial c}.
\end{equation*}
 On account of  \eqref{eq:Reigen} and \eqref{eq:Reigen0}, the essential spectrum of $\mathcal{R}(Q)$ is given by  $2\lambda\geq0$, which equals to the interval $[0,+\infty)$.
The associated generalized eigenfunctions $\big(N\bar{N}^\ast\big)(x,\lambda)$ and $\big(N^\ast\bar{N}\big)(x,\lambda)$ possess no spatial decay and not in $L^2(\R)$ which can be seen from  \eqref{phi2} and \eqref{phi3}.

On the other hand, a simple direct computation shows that the kernel of $\mathcal{R}(Q)$ is reached at $\lambda=0$, in view of \eqref{phi1}, \eqref{phi2} and \eqref{phi3}, the associated eigenfunction is  $$\big(N\bar{N}^\ast\big)(x,0)=|N(x,0)|^2\notin L^2(\R).$$ The proof of the lemma is completed.
\end{proof}

Similar to the proof of Lemma \ref{le3.6}, we have the following result concerning the spectrum of  the composite operators  $\mathcal{R}^n(Q)$ for $n\geq2$.

\begin{corollary} \label{le3.8} The composite operator $\mathcal{R}^n(Q)$ defined in $L^2(\R)$ with domain $H^{n}(\R)$ has only one eigenvalue $(-c)^n$  associated with the eigenfunction $Q$, the essential spectrum is the interval $[0,+\infty)$, and the corresponding generalized eigenfunctions do not have spatial decay and not in $L^2(\R)$.
\end{corollary}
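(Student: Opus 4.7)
The plan is to derive Corollary~\ref{le3.8} directly from Lemma~\ref{le3.6} by iterating $\mathcal{R}(Q)$. Since all polynomials in $\mathcal{R}(Q)$ mutually commute, a spectral-mapping-type argument is available even though $\mathcal{R}(Q)$ is non-self-adjoint and nonlocal. I would split the verification into three pieces: producing the eigenvalue $(-c)^n$ with eigenfunction $Q$, locating the essential spectrum, and ruling out any other $L^2$-eigenvalue.

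The first two pieces are computational. From the identity $\mathcal{R}(Q)Q=-cQ$ obtained in Lemma~\ref{le3.6}, a direct induction gives $\mathcal{R}^n(Q)Q=(-c)^n Q$; since $Q\in H^n(\R)$ for every $n\ge 1$, this is a genuine $L^2$-eigenvalue with eigenfunction $Q$. For the essential spectrum, iterating the squared-eigenfunction identities \eqref{eq:Reigen} and \eqref{eq:Reigen0} yields
\[
\mathcal{R}^n(Q)(N\bar N^{\ast})(x,\lambda)=(2\lambda)^n (N\bar N^{\ast})(x,\lambda),\qquad \mathcal{R}^n(Q)(N^{\ast}\bar N)(x,\lambda)=(2\lambda)^n (N^{\ast}\bar N)(x,\lambda),
\]
for $\lambda>0$. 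As $\lambda$ sweeps $(0,+\infty)$, $(2\lambda)^n$ covers $(0,+\infty)$, and combined with the $\lambda=0$ direction $(N\bar N^{\ast})(x,0)$ this gives $[0,+\infty)\subset\sigma_{\mathrm{ess}}(\mathcal{R}^n(Q))$. The asymptotics \eqref{phi2}--\eqref{phi3} together with \eqref{2.N}--\eqref{2.barN} force these generalized eigenfunctions to be merely bounded, so they are not in $L^2(\R)$.

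The remaining task, which I expect to be the main obstacle, is uniqueness of the discrete eigenvalue. Suppose towards contradiction that $\mathcal{R}^n(Q)v=\mu v$ with $v\in H^n(\R)\setminus\{0\}$ and $\mu\ne(-c)^n$. Fixing any $n$-th root $\rho$ of $\mu$ and setting $\omega=e^{2\pi i/n}$, one has the commutative factorization
\[
\mathcal{R}^n(Q)-\mu I=\prod_{k=0}^{n-1}\bigl(\mathcal{R}(Q)-\rho\omega^k I\bigr).
\]
Define the chain $w_n:=v$ and $w_k:=(\mathcal{R}(Q)-\rho\omega^k I)w_{k+1}$; then $w_0=0$ while $w_n\ne 0$, so there is a smallest index with $w_{k+1}\ne 0$ and $w_k=0$, exhibiting $\rho\omega^k$ as a point-spectrum value of $\mathcal{R}(Q)$ with eigenfunction $w_{k+1}$. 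The delicate point is ensuring $w_{k+1}\in L^2(\R)$; this is exactly where Proposition~\ref{pr2.0} enters, since the mapping property $\mathcal{R}(Q):H^{k+1}(\R)\to H^{k}(\R)$ propagates along the chain and keeps $w_{k+1}\in H^{k+1}(\R)\subset L^2(\R)$. Lemma~\ref{le3.6} then forces $\rho\omega^k=-c$, hence $\mu=(-c)^n$, yielding the desired contradiction and completing the proof.
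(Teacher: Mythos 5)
Your proposal is correct, and its first two steps are exactly what the paper intends by ``similar to the proof of Lemma~\ref{le3.6}'': iterating $\mathcal{R}(Q)Q=-cQ$ and the squared-eigenfunction relations \eqref{eq:Reigen}--\eqref{eq:Reigen0} to produce the eigenvalue $(-c)^n$ with eigenfunction $Q$ and the continuum $(2\lambda)^n$, $\lambda\geq0$, whose generalized eigenfunctions are bounded but not in $L^2(\R)$; the paper offers no further argument, and uniqueness of the discrete eigenvalue is there inherited from the same IST/completeness description of the spectrum as in the lemma. Where you genuinely differ is the third step: instead of re-running that scattering-theoretic verification, you rule out any other $L^2$-eigenvalue of $\mathcal{R}^n(Q)$ abstractly, factorizing $\mathcal{R}^n(Q)-\mu I=\prod_{k=0}^{n-1}\bigl(\mathcal{R}(Q)-\rho\omega^k I\bigr)$ and using the chain $w_n=v$, $w_k=(\mathcal{R}(Q)-\rho\omega^k I)w_{k+1}$ to exhibit some $\rho\omega^k$ as a point-spectrum value of $\mathcal{R}(Q)$ with an $L^2$ eigenfunction, whence $\rho\omega^k=-c$ and $\mu=(-c)^n$. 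This makes the corollary a formal consequence of Lemma~\ref{le3.6} plus the mapping property of Proposition~\ref{pr2.0} (which, as you note, keeps each $w_{k+1}$ in $H^{k+1}\subset H^1$, hence in the domain of $\mathcal{R}(Q)$), whereas the paper's route simply repeats the direct verification. Two small caveats you should make explicit: the roots $\rho\omega^k$ are in general complex, so the chain argument takes place in the complexification of $L^2(\R)$, and you must read Lemma~\ref{le3.6} as asserting that $-c$ is the \emph{only} point-spectrum value of the (non-self-adjoint) operator $\mathcal{R}(Q)$ -- including possible non-real or embedded eigenvalues in $[0,+\infty)$ -- which is how the paper uses it (via the completeness of the squared eigenfunctions), even though the word ``discrete'' in its statement does not by itself exclude an embedded $L^2$-eigenvalue.
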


We now consider the adjoint recursion operator $\mathcal{R}^\star(Q)$.  In view of the factorization \eqref{operator identity2}, it shares the same eigenfunctions of $\mathcal{J}L_n$ and thus is more relevant to the spectral stability problems of solitons. Recall that \eqref{eq:adjoint of R} implies
$$
\mathcal{R}^{\star}(u)=J\mathcal{R}(u)J^{-1}.
$$
The spectral information of $\mathcal{R}^{\star}(Q)$ can be derived as follows.

\begin{lemma} \label{le3.81} The adjoint recursion operator $\mathcal{R}^\star(Q)$ defined in $L^2(\R)$ with domain $H^{1}(\R)$  has only one eigenvalue $-c$ associated with the eigenfunction $Q_x$, the essential spectrum is the interval $[0,+\infty)$, and the corresponding eigenfunctions do not have spatial decay and not in $L^2(\R)$. Moreover, the kernel of $\mathcal{R}^\star(Q)$ is spanned by $\big(N\bar{N}^\ast\big)_x(x,0)$.
\end{lemma}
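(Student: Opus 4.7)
The proof would mirror Lemma \ref{le3.6} but transport everything through the intertwining relation
\[
\mathcal{R}^\star(Q)\mathcal{J}=\mathcal{J}\mathcal{R}(Q),
\]
which is exactly \eqref{eq:R Rstar} specialized to $u=Q$. The strategy is: apply $\mathcal{J}=\partial_x$ to every eigenfunction of $\mathcal{R}(Q)$ supplied by Lemma \ref{le3.6}, verify that the resulting function is nonzero and lies in (resp. outside) $L^2(\R)$ in each case, and then conclude that no further point or essential spectrum can appear.

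First I would establish the discrete eigenvalue. By Lemma \ref{le3.6}, $\mathcal{R}(Q)Q=-cQ$; applying $\mathcal{J}$ and using the intertwining relation yields $\mathcal{R}^\star(Q)Q_x=-cQ_x$. Since $Q_c(s)=2c/(c^2 s^2+1)$, the derivative $Q_x$ decays like $|x|^{-3}$, lies in $H^1(\R)\subset L^2(\R)$, and is nonzero, so $-c\in\sigma_p(\mathcal{R}^\star(Q))$ with eigenfunction $Q_x$. For the kernel, $(N\bar N^\ast)(x,0)$ spans $\ker\mathcal{R}(Q)$, so $\partial_x(N\bar N^\ast)(x,0)\in\ker\mathcal{R}^\star(Q)$; one has to check this derivative is nontrivial and belongs to $L^2(\R)$. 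Using the explicit expressions \eqref{phi2}, \eqref{phi3} for $\bar N$ and $N$ at $\lambda=0$ (plus the behavior of $\Phi_1$ in \eqref{phi1}), the product $|N(x,0)|^2$ has a nonzero, rational oscillation-free profile whose derivative decays polynomially, giving an $L^2$ kernel element.

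Next I would handle the essential spectrum. Applying $\mathcal{J}$ to \eqref{eq:Reigen}--\eqref{eq:Reigen0} gives
\[
\mathcal{R}^\star(Q)\partial_x(N\bar N^\ast)(x,\lambda)=2\lambda\,\partial_x(N\bar N^\ast)(x,\lambda),\qquad \lambda>0,
\]
and the conjugate identity. The asymptotic formulas \eqref{2.N}--\eqref{2.barN} and \eqref{phi2}--\eqref{phi3} show that $N\bar N^\ast$ is a bounded oscillating function as $|x|\to\infty$ with nonzero derivative that is still bounded but not integrable, so these generalized eigenfunctions are not in $L^2(\R)$. This places $[0,+\infty)$ in the spectrum as continuous spectrum; conversely, the essential spectrum cannot be larger than that of $\mathcal{R}(Q)$ because (by Weyl's theorem) $\mathcal{R}^\star(Q)$ differs from $-H\partial_x$ by a term built from $Q$ and its Hilbert transforms which is relatively compact on $H^1(\R)$, and $-H\partial_x$ has spectrum $[0,+\infty)$.

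The main obstacle, and the one I would treat last, is the \emph{uniqueness} of $-c$ as the only negative eigenvalue. The cleanest route is to argue by contradiction: if $\mathcal{R}^\star(Q)g=\mu g$ with $g\in H^1(\R)$ and $\mu<0$, $\mu\neq -c$, set $f(x)=\int_{-\infty}^x g(y)\,dy$. The intertwining relation implies $\partial_x\bigl(\mathcal{R}(Q)f-\mu f\bigr)=0$, so $\mathcal{R}(Q)f-\mu f$ is a constant; writing out $\mathcal{R}(Q)\sim -H\partial_x+L(Q)$ from Proposition \ref{pr2.0} and using $\mu<0$, one solves for that constant to obtain $f\in L^2(\R)$ (or at worst a generalized eigenfunction with prescribed asymptotics), contradicting the fact from Lemma \ref{le3.6} that $\mathcal{R}(Q)$ has only $-c$ as a discrete eigenvalue. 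This forces $\mu=-c$, and the associated eigenspace is one-dimensional because it is the image under $\mathcal{J}$ of the one-dimensional $-c$-eigenspace of $\mathcal{R}(Q)$ spanned by $Q$. Combining these three paragraphs gives the spectrum, eigenfunction, essential spectrum, and kernel description claimed in the lemma.
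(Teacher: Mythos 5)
Your overall route is essentially the paper's: the paper proves this lemma by differentiating the squared-eigenfunction relations underlying Lemma \ref{le3.6}, i.e.\ it obtains \eqref{eq:Reigen2}--\eqref{eq:Reigen4} from the same identities used for $\mathcal{R}(Q)$, and that is exactly what your conjugation $\mathcal{R}^\star(Q)=\mathcal{J}\mathcal{R}(Q)\mathcal{J}^{-1}$ accomplishes; the Weyl-type upper bound for the essential spectrum is a reasonable supplement. Your closing uniqueness argument is sketchier than you present it: the antiderivative $f=\int_{-\infty}^x g\,\rmd y$ of an $L^2$ eigenfunction need not lie in the domain of $\mathcal{R}(Q)$, absorbing the constant into $f$ requires knowing how $\mathcal{R}(Q)$ acts on constants, and since $\mathcal{R}^\star(Q)$ is not self-adjoint one would also have to exclude non-real point spectrum; the paper instead leans, as it does throughout Section \ref{sec_4}, on the completeness of the squared eigenfunctions, so on this point the two arguments are of comparable (incomplete) rigor and I would not fault you there.

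The genuine error is in your kernel paragraph. From \eqref{phi1}--\eqref{phi3}, with $\Phi_1(x)=(x+\gamma_1)^{-1}$ and $\lambda_1=-\tfrac{c}{2}$ one computes (taking $x_0=0$)
\[
N(x,0)=\bar N(x,0)=\frac{cx-i}{cx+i},\qquad \big(N\bar N^\ast\big)(x,0)=|N(x,0)|^2\equiv 1,
\]
so the function you propose to differentiate is the constant $1$, not a ``nonzero rational profile whose derivative decays polynomially''; its $x$-derivative vanishes identically and yields no nontrivial $L^2$ kernel element. Moreover, the conclusion you are aiming for would contradict the very statement being proved: a nontrivial $L^2$ element of $\ker\mathcal{R}^\star(Q)$ would make $0$ a second eigenvalue in addition to $-c$. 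In the paper the ``kernel'' is meant exactly as in Lemma \ref{le3.6}, namely a generalized, non-decaying, non-$L^2$ zero-energy object attached to the endpoint $\lambda=0$ of the continuous family $\big(N\bar N^\ast\big)_x(x,\lambda)$ (compare the normalization $\rmd\lambda/\lambda$ in the closure relation \eqref{eq:closure relation}); to repair your argument you must drop the $L^2$ claim and treat $\lambda=0$ as the edge of the essential spectrum rather than as a discrete eigenvalue.
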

\begin{proof}
Consider the Jost solutions  of the spectral problem \eqref{space lax} with the potential $Q$ and the asymptotic formulas in  \eqref{Jost be1}, \eqref{Jost be2}, \eqref{2.N}, \eqref{2.barN} and \eqref{2.M}. The soliton profile $Q$ is  generated by the eigenvalue $\lambda_1=-\frac{c}{2}$. Similar to the proof of Lemma \ref{le3.6}, we find the eigenvalue of $\mathcal{R}^\star_{12}(Q)$ in \eqref{eq:recursion operator} around the soliton profile $Q$, as $\mathcal{R}^\star(Q)$ is not explicit. It is then found from \eqref{eq:eigen ident1}, \eqref{eq:eigen ident2} and \eqref{eq:eigen ident3}  that for $\lambda>0$, one has
\begin{eqnarray*}
&&\mathcal{R}^\star_{12}(Q)\big(Q_{12}^-N(x_1)\bar{N}^\ast(x_2)\big)=4\lambda\big(Q_{12}^-N(x_1)\bar{N}^\ast(x_2)\big),\label{eq:eigen ident7}\\
&&\mathcal{R}^\star_{12}(Q)\big(Q_{12}^-N^\ast(x_1)\bar{N}(x_2)\big)=4\lambda\big(Q_{12}^-N^\ast(x_1)\bar{N}(x_2)\big),\label{eq:eigen ident8}\\
&&\mathcal{R}^\star_{12}(Q)\big(Q_{12}^-\Phi_1(x_1)\Phi_1^\ast(x_2)\big)=4\lambda_1\big(Q_{12}^-\Phi_1(x_1)\Phi_1^\ast(x_2)\big)=-2c\big(Q_{12}^-\Phi_1(x_1)\Phi_1^\ast(x_2)\big).\label{eq:eigen ident9}
\end{eqnarray*}
As a consequence, there holds the following relations
\begin{eqnarray}
&&\mathcal{R}^\star(Q)\big(N\bar{N}^\ast\big)_x(x,\lambda)=2\lambda\big(N\bar{N}^\ast\big)_x(x,\lambda),\quad \text{for} \ \lambda>0,\label{eq:Reigen2} \\ &&\mathcal{R}^\star(Q)\big(N^\ast\bar{N}\big)_x(x,\lambda)=2\lambda\big(N^\ast\bar{N}\big)_x(x,\lambda),\quad \text{for} \ \lambda>0,\label{eq:Reigen21} \\
&&\mathcal{R}^\star(Q)\big(\Phi_1\Phi_1^\ast\big)_x(x)=2\lambda_1\big(\Phi_1\Phi_1^\ast\big)_x(x)=-c\big(\Phi_1\Phi_1^\ast\big)_x(x),\label{eq:Reigen3}\\
&&\mathcal{R}^\star(Q)\frac{\partial Q_x}{\partial c}=-Q_x-c\frac{\partial Q_x}{\partial c}.\label{eq:Reigen4}
\end{eqnarray}
Since by \eqref{eq:Reigen3}, one has $\mathcal{R}^\star(Q)Q_x=-cQ_x$, then one sees that $-c$ is the only discrete eigenvalue. In view of \eqref{eq:Reigen2} and \eqref{eq:Reigen21}, the essential spectrum of $\mathcal{R}^\star(Q)$ is  $2\lambda\geq0$ which is the interval $[0,+\infty)$.
The associated generalized eigenfunctions $\big(N\bar{N}^\ast\big)_x(x,\lambda)$ possess no spatial decay and not in $L^2(\R)$ which can be seen from   \eqref{phi2} and \eqref{phi3}.

Similarly, the kernel of $\mathcal{R}^\star(Q)$ is attached at $\lambda=0$ and the associated kernel is $\big(N\bar{N}^\ast\big)_x(x,0)$. This completes the proof of Lemma \ref{le3.81}.
\end{proof}

\begin{remark}\label{4.0}
The spectral information of  $\mathcal{R}(Q)$ presented in  Lemma \ref{le3.6} and $\mathcal{R}^\star(Q)$ in  Lemma \ref{le3.81} reveal that $\mathcal{R}(Q)$ and $\mathcal{R}^\star(Q)$ are essentially invertible in $L^2(\R)$.
\end{remark}

\subsection{The spectrum of linearized operators $\mathcal{J}L_n$, $L_n\mathcal{J}$ and $L_n$}
In this subsection our attention is focused on  the spectral analysis of the  linearized operators $\mathcal{J}L_n$, $L_n\mathcal{J}$ and $L_n$. The main ingredients are \eqref{operator identity2} the observation that the eigenfunctions of the adjoint recursion operator $\mathcal{J}L_n$ and its generalized eigenfunction form an orthogonal basis in $L^2(\R)$ (see \eqref{decomposition of z} below). It follows that the spectra of $\mathcal{J}L_n$ lies on the imaginary axis which implies directly the spectral stability of the BO solitons.

Let us first deal with the $n=1$ case, recall from \eqref{Jost be1} that $|(N\bar{N}^\ast)(x,\lambda)- e^{i\lambda x}|\rightarrow0$ as $x\rightarrow +\infty$, then we can summarize the spectral information of $\mathcal{J}L_1$  as follows:
\begin{eqnarray*}
&&\mathcal{J}L_1\big(N\bar{N}^\ast\big)_x=i(\lambda^2+\lambda)\big(N\bar{N}^\ast\big)_x,\quad \text{for} \ \lambda>0,\label{eq:BOessentia} \\
&&\mathcal{J}L_1\big(N^\ast\bar{N}\big)_x=-i(\lambda^2+\lambda)\big(N^\ast\bar{N}\big)_x,\quad \text{for} \ \lambda>0,\label{eq:BOessentia'} \\
&&\mathcal{J}L_1\big(\Phi_1\Phi_1^{\ast}\big)_x=\frac{c}{2}\mathcal{J}L_1Q_x=0,\label{eq:BOkernel}\\
&&\mathcal{J}L_1\frac{\partial Q}{\partial c}=-Q_x.\label{eq:BOgenera kernel}
\end{eqnarray*}

Similarly, key spectral information of the operator $L_1\mathcal{J}$ is the following
\begin{eqnarray*}
&&L_1\mathcal{J}\big(N\bar{N}^\ast\big)=i(\lambda^2+\lambda)\big(N\bar{N}^\ast\big), \quad \text{for} \ \lambda>0 , \label{eq:BOessentia2} \\
&&L_1\mathcal{J}\big(N^\ast\bar{N}\big)=-i(\lambda^2+\lambda)\big(N^\ast\bar{N}\big), \quad \text{for} \ \lambda>0 ,; \label{eq:BOessentia2'} \\
&&L_1\mathcal{J}\big(\Phi_1\Phi_1^{\ast}\big)=\frac{c}{2}L_1Q_x=0,\label{eq:BOkerne2}\\
&&L_1\mathcal{J}\partial_x^{-1}\frac{\partial Q}{\partial c}= L_1\frac{\partial Q}{\partial c}=-Q.\label{eq:BOgenera kerne2}
\end{eqnarray*}
Here the function $\partial_x^{-1}\frac{\partial Q}{\partial c}\in L^2(\R)$ is well defined since  $\frac{\partial Q}{\partial c}=\frac{2(1-c^2x^2)}{(c^2x^2+1)^2)}\in H^1(\R)$.
The eigenfunctions presented above in terms of the squared eigenfunctions of the eigenvalue problem of the BO equation \eqref{space lax} with the potential $u=Q$. In this case, $\beta(\lambda)=0$ for $\lambda>0$ and there exists only one discrete eigenvalue $\lambda_1=-\frac{c}{2}$, the Jost solutions are explicitly given by \eqref{phi1}, \eqref{phi2} and \eqref{phi3}. The squared eigenfunctions generate the two function sets as follows. The first set
\begin{equation}\label{eq:set JL}
\{\big(N\bar{N}^\ast\big)_x(x,\lambda),\ \big(N^\ast\bar{N}\big)_x(x,\lambda) \quad \text{for} \quad \lambda>0;\quad Q_x; \quad \frac{\partial Q}{\partial c}  \}
\end{equation}
consists of  linearly independent eigenfunctions and generalized kernel of the operator $\mathcal{J}L_1$. Moreover, they are essentially orthogonal under the $L^2$-inner product.
The second set
\begin{equation}\label{eq:set LJ}
\{\big(N\bar{N}^\ast\big)(x,\lambda),\ \big(N^\ast\bar{N}\big)(x,\lambda) \quad \text{for} \quad \lambda>0;\quad Q; \quad \partial_x^{-1}\frac{\partial Q}{\partial c} \}
\end{equation}
consists of linearly independent eigenfunctions and generalized kernel of the operator $L_1\mathcal{J}$. Notice that the function $\frac{\partial Q}{\partial c}$ is even, by using the asymptotic behaviors of the Jost solutions in \eqref{Jost be1}, \eqref{Jost be2}, \eqref{2.N}, \eqref{2.barN} and \eqref{2.M}, for $\lambda,\lambda'>0$, one can compute the inner product of the elements of the sets \eqref{eq:set JL} and \eqref{eq:set LJ} as the following (see \cite{KLM99}):
\begin{eqnarray}
&&\int_{\R}\big(N\bar{N}^\ast\big)_x(x,\lambda)(N^\ast\bar{N}\big)(x,\lambda')\rmd x=-2\pi i\lambda\delta(\lambda-\lambda'),\label{eq:product1} \\
&&\int_{\R}\big(N^\ast\bar{N}\big)_x(x,\lambda)(N\bar{N}^\ast\big)(x,\lambda')\rmd x=2\pi i\lambda\delta(\lambda-\lambda'),\label{eq:product1'} \\
&&\int_{\R}\big(N\bar{N}^\ast\big)_x(x,\lambda)(N\bar{N}^\ast\big)(x,\lambda')\rmd x=\int_{\R}\big(N^\ast\bar{N}\big)_x(x,\lambda)(N^\ast\bar{N}\big)(x,\lambda')\rmd x=0,\label{eq:product1''} \\
&&\int_{\R}Q_x\partial_x^{-1}\big(\frac{\partial Q}{\partial c}\big)\rmd x=-\int_{\R}Q\frac{\partial Q}{\partial c}\rmd x=-\frac{\rmd H_1(Q)}{\rmd c}=-\pi,\label{eq:product2}\\
&&\int_{\R}\frac{\partial Q}{\partial c}Q\rmd x=\frac{\rmd H_1(Q)}{\rmd c}=\pi.\label{eq:product3}
\end{eqnarray}
The corresponding closure or completeness relation is
\begin{eqnarray}\label{eq:closure relation}
&&\frac{1}{2\pi i}\int_0^{+\infty}\bigg(\big(N\bar{N}^\ast\big)_x(x,\lambda)\big(N^\ast\bar{N}\big)(y,\lambda)-\big(N^\ast\bar{N}\big)_x(x,\lambda)\big(N\bar{N}^\ast\big)(y,\lambda)\bigg)\frac{\rmd \lambda}{\lambda}\nonumber\\
&&+\frac{1}{\pi}\bigg(Q(y)\frac{\partial Q(x)}{\partial c}-Q_x\partial_y^{-1}\frac{\partial Q(y)}{\partial c}\bigg)=\delta(x-y),
\end{eqnarray}
which indicates that any function $z(y)$ which vanishes at $x\rightarrow\pm\infty$  can be expanded over the above two bases \eqref{eq:set JL} and \eqref{eq:set LJ}. In particular, we have the following decomposition of the function $z$:
\begin{eqnarray} \label{decomposition of z}
&& z(x)=\int_0^{+\infty}\bigg(\alpha(\lambda)\big(N\bar{N}^\ast\big)_x(x,\lambda)+\alpha^\ast(\lambda)\big(N^\ast\bar{N}\big)_x(x,\lambda)\bigg)\rmd \lambda+\beta Q_x+\gamma \frac{\partial Q}{\partial c},\\
&&\alpha(\lambda)=\frac{1}{2\pi i\lambda}\langle\big(N^\ast\bar{N}\big)(y,\lambda),z(y)\rangle,\ \beta= \frac{1}{\pi}\langle\partial_y^{-1}\frac{\partial Q(y)}{\partial c},z(y)\rangle,\ \gamma=\frac{1}{\pi}\langle Q(y),z(y)\rangle.
\end{eqnarray}
Similarly, one can also decompose the function $z(x)$ on the second set \eqref{eq:set LJ} by multiplying \eqref{eq:closure relation} with $z(x)$ and integrating with $\rmd x$.

We now  consider the operator $\mathcal{J}L_n$. Since $L_n=H''_{n+1}(Q)+cH''_n(Q)$ given by \eqref{formula of Ln} which is defined in $L^2(\R)$ with domain $H^{n}(\R)$,  the symbol of the principle (constant coefficient) part of which is $$\big(H''_{n+1}(0)+cH''_n(0)\big)^{\wedge}(\xi)=\frac{2^n}{n+1}\widehat{(-H\partial_x)^n}+\frac{2^{n-1}c}{n}\widehat{(-H\partial_x)^{n-1}}=\frac{2^n}{n+1}|\xi|^n+\frac{2^{n-1}c}{n}|\xi|^{n-1},$$
it thus transpires that the symbol of the principle part of the operator $\mathcal{J}L_n$ is
\begin{equation}\label{eq:symbol JL}
\varrho_{n,c}(\xi):=i\frac{2^n}{n+1}|\xi|^{n}\xi+i\frac{2^{n-1}c}{n}|\xi|^{n-1}\xi.
\end{equation}

We have the following statement which concerning the spectrum for the operator $\mathcal{J}L_n$.
\begin{proposition} \label{pr3.5-3}
The essential spectra of  $\mathcal{J}L_n$ (defined in $L^2(\R)$ with domain $H^{n+1}(\R)$) for $n\geq1$ is  $i\R$, the kernel is spanned by the function $Q_x$ and the generalized kernel is spanned by $\frac{\partial Q}{\partial c}$.
\end{proposition}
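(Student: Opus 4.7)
The plan is to exploit the commutation $\mathcal{J}L_n\mathcal{R}^\star(Q)=\mathcal{R}^\star(Q)\mathcal{J}L_n$ of Lemma \ref{le3.5} combined with the iteration $L_n=\mathcal{R}(Q)L_{n-1}$ from Proposition \ref{pr2.3}. Multiplying the latter on the left by $\mathcal{J}$ and using the coupling \eqref{eq:R Rstar} repeatedly, I would reduce to the representation
\begin{equation*}
\mathcal{J}L_n=\mathcal{R}^\star(Q)^{n-1}\mathcal{J}L_1,\qquad n\geq 2,
\end{equation*}
after which the spectral problem for $\mathcal{J}L_n$ is immediately expressed in terms of data already gathered for $\mathcal{J}L_1$ and $\mathcal{R}^\star(Q)$ on the basis \eqref{eq:set JL}. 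The eigenvalue $2\lambda$ of $\mathcal{R}^\star(Q)$ on $(N\bar N^\ast)_x(x,\lambda)$ from \eqref{eq:Reigen2}, combined with the action of $\mathcal{J}L_1$ recorded just before the Proposition, would yield
\begin{equation*}
\mathcal{J}L_n(N\bar N^\ast)_x(\cdot,\lambda)=i\,2^{n-1}\lambda^n(\lambda+c)(N\bar N^\ast)_x(\cdot,\lambda),\quad \lambda>0,
\end{equation*}
with a conjugate formula on the $(N^\ast\bar N)_x$ branch, while $L_1Q_x=0$ (from differentiating \eqref{eq:stationary} once in $x$) gives $\mathcal{J}L_n Q_x=0$, and iterating $\mathcal{J}L_1\tfrac{\partial Q}{\partial c}=-Q_x$ together with $\mathcal{R}^\star(Q)Q_x=-cQ_x$ gives $\mathcal{J}L_n\tfrac{\partial Q}{\partial c}=(-1)^n c^{n-1}Q_x$.

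Next I would promote these pointwise identities to a complete spectral picture using the closure relation \eqref{eq:closure relation} and the expansion \eqref{decomposition of z}. Applied termwise, $\mathcal{J}L_n$ multiplies the continuous coefficient $\alpha(\lambda)$ by the nonzero factor $i\,2^{n-1}\lambda^n(\lambda+c)$, annihilates the $\beta Q_x$ piece, and sends $\gamma\tfrac{\partial Q}{\partial c}$ to a nonzero multiple of $Q_x$. Imposing $\mathcal{J}L_n f=0$ then forces $\alpha\equiv 0$ and $\gamma=0$, so $\ker(\mathcal{J}L_n)=\mathrm{span}(Q_x)$; imposing only $\mathcal{J}L_n f\in\ker(\mathcal{J}L_n)$ additionally leaves $\gamma$ free, so $\tfrac{\partial Q}{\partial c}$ generates the generalized kernel modulo the kernel.

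For the essential spectrum, I would note that the difference $\mathcal{J}L_n-\mathcal{J}L_n^{(0)}$ has all its coefficients involving $Q$ and its derivatives, which decay at infinity; a standard Weyl-type relative compactness argument then identifies the essential spectrum of $\mathcal{J}L_n$ with that of the constant coefficient Fourier multiplier $\mathcal{J}L_n^{(0)}$, whose symbol $\varrho_{n,c}(\xi)$ from \eqref{eq:symbol JL} is purely imaginary and sweeps out all of $i\R$ continuously as $\xi$ runs over $\R$. The step I expect to require most care is justifying the termwise action of $\mathcal{J}L_n$ on the expansion \eqref{decomposition of z} for a general $f\in H^{n+1}(\R)$: the closure relation \eqref{eq:closure relation} sits naturally at the $n=1$ regularity, so one must verify that $\alpha(\lambda)$ decays fast enough at $\lambda\to\infty$ to absorb the eigenvalue growth $\lambda^{n+1}$, which should follow by repeated integration by parts in the oscillatory integrals defining $\alpha(\lambda)$ using the Sobolev regularity of $f$ and the explicit asymptotics \eqref{2.N}--\eqref{2.M} of the squared Jost solutions.
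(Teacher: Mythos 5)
Your strategy is essentially the paper's: both arguments diagonalize $\mathcal{J}L_n$ on the squared Jost functions $\big(N\bar{N}^\ast\big)_x$, $\big(N^\ast\bar{N}\big)_x$, obtain the kernel from $L_nQ_x=0$, obtain the generalized kernel from the $c$-derivative of the soliton identity (equivalently $\mathcal{J}L_n\frac{\partial Q}{\partial c}=(-1)^nc^{n-1}Q_x$), and read the essential spectrum off a purely imaginary symbol. The one substantive discrepancy is the eigenvalue you assign on the continuous branch. The paper's proof takes the eigenvalue to be $\varrho_{n,c}(\lambda)=i\frac{2^n}{n+1}\lambda^{n+1}+i\frac{2^{n-1}c}{n}\lambda^{n}$, and with the paper's definitions this value is forced: since $\big(N\bar{N}^\ast\big)(x,\lambda)\to e^{i\lambda x}$ as $x\to+\infty$ by \eqref{2.2} and every $Q$-dependent coefficient of $\mathcal{J}L_n$ decays, an eigenvalue on this function must equal the constant-coefficient symbol \eqref{eq:symbol JL} at $\xi=\lambda$ (one checks directly from \eqref{mass}--\eqref{conser2} that $H_2''(0)=-H\partial_x$ and $H_3''(0)=-\tfrac43\partial_x^2$, consistent with \eqref{eq:symbol JL} and with \eqref{eq:linearized operator}). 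Your value $i\,2^{n-1}\lambda^{n}(\lambda+c)$, obtained from $\mathcal{J}L_n=\big(\mathcal{R}^\star(Q)\big)^{n-1}\mathcal{J}L_1$ together with \eqref{eq:Reigen2}, agrees with this for $n=1$ but not for $n\geq2$: for $n=2$ the symbol gives $i\tfrac43\lambda^3+ic\lambda^2$ while your iteration gives $2i\lambda^3+2ic\lambda^2$.

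The source of the mismatch is not really yours: the literal identity $L_n=\mathcal{R}(Q)L_{n-1}$ of Proposition \ref{pr2.3}, with a single $n$-independent recursion operator normalized so that its continuous eigenvalue is exactly $2\lambda$ as in Lemma \ref{le3.81}, is incompatible with the normalization \eqref{HI relation} of the $H_n$; letting $Q\to0$ it would require
\[
\frac{2^{n}}{n+1}|\xi|^{n}+\frac{2^{n-1}c}{n}|\xi|^{n-1}
=2|\xi|\Bigl(\frac{2^{n-1}}{n}|\xi|^{n-1}+\frac{2^{n-2}c}{n-1}|\xi|^{n-2}\Bigr),
\]
which fails for $n\geq2$, so with the paper's $H_n$ the recursion intertwines consecutive linearized operators only up to $n$-dependent positive constants. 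Thus your reduction formula, taken from Proposition \ref{pr2.3} and \eqref{eq:R Rstar} at face value, yields eigenvalues that are off by positive factors on each homogeneity layer. For the proposition under review this is harmless: the continuous-branch eigenvalues are still nonzero, purely imaginary and sweep out all of $i\R$ together with their conjugate branch, and the kernel and generalized kernel computations are unchanged, so all stated conclusions survive. If you want constants consistent with the rest of the paper (they reappear as the positive weight in the quadratic-form computation of Lemma \ref{le3.9}), derive the eigenvalue from the $x\to+\infty$ asymptotics, as the paper does, rather than from the iteration. Finally, your two additional care points, the Weyl-type relative-compactness identification of the essential spectrum and the justification of termwise action of $\mathcal{J}L_n$ on the expansion \eqref{decomposition of z} via decay of $\alpha(\lambda)$, address steps the paper treats only formally and are sensible additions.
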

\begin{proof}
 The proof is by direct verification. We compute the spectrum of the operator $\mathcal{J}L_n$ directly by employing the squared eigenfunctions as follows
\begin{eqnarray}
&&\mathcal{J}L_n\big(N\bar{N}^\ast\big)_x=\varrho_{n,c}(\lambda)\big(N\bar{N}^\ast\big)_x,\quad \text{for} \ \lambda>0,\label{eq:essentia} \\
&&\mathcal{J}L_n\big(N^\ast\bar{N}\big)_x=\varrho^\ast_{n,c}(\lambda)\big(N^\ast\bar{N}\big)_x,\quad \text{for} \ \lambda>0,\label{eq:essentia'}\\
&&\mathcal{J}L_n\big(\Phi_1\Phi_1^{\ast}\big)_x=\frac{c}{2}\mathcal{J}L_nQ_x=0,\label{eq:kernel}\\
&&\mathcal{J}L_n\frac{\partial Q}{\partial c}=(-1)^{n}c^{n-1}Q_x.\label{eq:genera kernel}
\end{eqnarray}
In view of \eqref{eq:symbol JL}, \eqref{eq:essentia} and \eqref{eq:essentia'},  the essential spectrum of $\mathcal{J}L_n$ are  $\pm\varrho_{n,c}(\lambda)$ for $ \lambda>0$, which is the whole imaginary axis. In view of \eqref{eq:kernel} and \eqref{eq:genera kernel}, the kernel and generalized kernel of $\mathcal{J}L_n$ is $Q_x$ and $\frac{\partial Q}{\partial c}$, respectively.  The proof of Proposition \ref {pr3.5-3} is completed.
\end{proof}

For the adjoint operator of $\mathcal{J}L_n$, namely, the operator $-L_n\mathcal{J}$, for the spectrum of which, we have the following result.

\begin{proposition} \label{le3.82}
The essential spectrum of  $L_n\mathcal{J}$  (defined in $L^2(\R)$ with domain $H^{n+1}(\R)$) for $n\geq1$ is  $i\R$, the kernel is spanned by the function $Q$ and the generalized kernel is spanned by $\partial_x^{-1}\big(\frac{\partial Q}{\partial c}\big)$.
\end{proposition}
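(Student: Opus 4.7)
The plan is to mirror the proof of Proposition \ref{pr3.5-3}, replacing the basis \eqref{eq:set JL} of eigenfunctions of $\mathcal{J}L_1$ by the dual basis \eqref{eq:set LJ} of eigenfunctions of $L_1\mathcal{J}$, and leveraging the factorization from Proposition \ref{pr2.3}: iterating $L_n = \mathcal{R}(Q) L_{n-1}$ gives $L_n = \mathcal{R}(Q)^{n-1} L_1$, hence $L_n\mathcal{J} = \mathcal{R}(Q)^{n-1}\,(L_1\mathcal{J})$. Since by Lemma \ref{le3.6} the squared eigenfunctions $(N\bar{N}^\ast)(x,\lambda)$ and $(N^\ast\bar{N})(x,\lambda)$ are eigenfunctions of $\mathcal{R}(Q)$ with eigenvalue $2\lambda$ for $\lambda > 0$, and by the identities recorded just before \eqref{eq:set LJ} they are eigenfunctions of $L_1\mathcal{J}$ with eigenvalues $\pm i(\lambda^2+\lambda)$, composition gives
\[
L_n\mathcal{J}(N\bar{N}^\ast)(x,\lambda) = (2\lambda)^{n-1}\, i(\lambda^2+\lambda)\,(N\bar{N}^\ast)(x,\lambda),\quad \lambda > 0,
\]
with the complex-conjugate formula for $(N^\ast\bar{N})(x,\lambda)$. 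As $\lambda$ ranges over $(0,+\infty)$, these eigenvalues sweep out the whole imaginary axis, which identifies the essential spectrum as $i\R$.

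For the kernel I would compute $L_n\mathcal{J}\, Q = L_n Q_x$. Applying $\partial_x$ to the one-soliton variational principle \eqref{eq:1-sol variaprinciple} (or equivalently differentiating in the translation parameter $x_0$) gives $L_n Q_x = 0$, so $Q \in \ker(L_n\mathcal{J})$. For the generalized kernel, observe that $L_n\mathcal{J}\,\partial_x^{-1}(\partial Q/\partial c) = L_n(\partial Q/\partial c)$. Differentiating \eqref{eq:1-sol variaprinciple} with respect to $c$ yields $L_n(\partial Q/\partial c) = -H'_n(Q)$; the iteration \eqref{eq:recursion4} together with Corollary \ref{le3.8} then gives $H'_n(Q) = \mathcal{R}(Q)^{n-1}Q = (-c)^{n-1}Q$, so
\[
L_n\mathcal{J}\,\partial_x^{-1}(\partial Q/\partial c) = (-1)^n c^{n-1} Q,
\]
placing $\partial_x^{-1}(\partial Q/\partial c)$ in the generalized kernel. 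Membership in $L^2(\R)$ is guaranteed by the direct antiderivative $\partial_x^{-1}(\partial Q/\partial c) = 2x/(c^2 x^2+1)$, which decays like $1/x$.

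To upgrade \emph{containment} to \emph{spanning}, I would invoke the completeness relation \eqref{eq:closure relation}: every $z \in L^2(\R)$ admits a unique expansion along \eqref{eq:set LJ}, and applying $L_n\mathcal{J}$ term by term, together with the biorthogonality pairings \eqref{eq:product1}--\eqref{eq:product3}, forces any candidate kernel element to have vanishing coefficients on the continuous-spectrum directions. The main obstacle is precisely this completeness/biorthogonality step: the identities \eqref{eq:product1}--\eqref{eq:product3} were proved only for the $n=1$ setting, so one must verify that they remain the right tool to isolate the kernel of the higher-order operator $L_n\mathcal{J}$. This reduces to showing that $\mathcal{R}(Q)^{n-1}$ acts injectively on the $L^2$-closure of $\{(N\bar{N}^\ast)(\cdot,\lambda), (N^\ast\bar{N})(\cdot,\lambda)\}_{\lambda>0}$, which is exactly what Corollary \ref{le3.8} provides, since the only value at which $\mathcal{R}(Q)^{n-1}$ vanishes is $\lambda = 0$ and the corresponding eigenfunction is not in $L^2(\R)$.
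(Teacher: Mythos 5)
Your proposal is correct and follows the same basic strategy as the paper: verify the action of $L_n\mathcal{J}$ on the squared-eigenfunction family $\big(N\bar N^\ast\big)(\cdot,\lambda)$, $\big(N^\ast\bar N\big)(\cdot,\lambda)$ and on $Q$, $\partial_x^{-1}\big(\tfrac{\partial Q}{\partial c}\big)$, exactly as in \eqref{eq:essentia2}--\eqref{eq:genera kerne2}. The one methodological difference is how the continuous-spectrum eigenvalue is obtained: the paper reads it off from the symbol $\varrho_{n,c}(\lambda)$ of \eqref{eq:symbol JL}, while you compose the factorization $L_n\mathcal{J}=\mathcal{R}(Q)^{n-1}L_1\mathcal{J}$ (Proposition \ref{pr2.3}) with Lemma \ref{le3.6}, getting $(2\lambda)^{n-1}i(\lambda^2+\lambda)$; the two formulas differ by harmless numerical factors (a normalization tension already present in the paper between \eqref{eq:symbol JL} and the iteration of Proposition \ref{pr2.3}), but both are purely imaginary and sweep out $i\R$, so the conclusion on the essential spectrum is unaffected. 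Your kernel and generalized-kernel computations coincide with \eqref{eq:kerne2}--\eqref{eq:genera kerne2}, and your derivation of $(-1)^nc^{n-1}Q$ via \eqref{eq:recursion4} and $\mathcal{R}(Q)Q=-cQ$ supplies exactly the justification the paper leaves implicit. On the final ``spanning'' step you are in fact more careful than the paper, which simply asserts it; your worry that \eqref{eq:product1}--\eqref{eq:product3} are ``$n=1$-specific'' is unfounded, since those pairings and the closure relation \eqref{eq:closure relation} concern only the fixed squared-eigenfunction basis of the one-soliton Lax problem and are independent of $n$; alternatively, spanning follows at once from Lemma \ref{le3.9}: if $L_n\mathcal{J}u=0$ with $u\in L^2$, then $u_x\in\ker L_n=\mathrm{span}\{Q_x\}$, hence $u\in\mathrm{span}\{Q\}$ in $L^2$, with the analogous argument (using Corollary \ref{le3.8} as you indicate) for the generalized kernel.
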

\begin{proof}
One can compute the spectrum of the operator $L_n\mathcal{J}$ directly by employing the squared eigenfunctions as follows
\begin{eqnarray}
&&L_n\mathcal{J}\big( N\bar{N}^\ast\big)=L_n\big(N\bar{N}^\ast\big)_x=\varrho_{n,c}(\lambda) N\bar{N}^\ast, \quad \text{for} \ \lambda>0;\label{eq:essentia2} \\
&&L_n\mathcal{J}\big( N^\ast\bar{N}\big)=L_n\big(N^\ast\bar{N}\big)_x=\varrho^\ast_{n,c}(\lambda)N^\ast\bar{N},\quad \text{for} \ \lambda>0, \label{eq:essentia2'} \\
&&L_n\mathcal{J}\Phi_1\Phi_1^{\ast}=\frac{c}{2} L_nQ_x=0,\label{eq:kerne2}\\
&&L_n\mathcal{J}\partial_x^{-1}\big(\frac{\partial Q}{\partial c}\big)=L_n\big(\frac{\partial Q}{\partial c}\big)=(-1)^{n}c^{n-1}Q.\label{eq:genera kerne2}
\end{eqnarray}
In view of \eqref{eq:symbol JL}, \eqref{eq:essentia2} and \eqref{eq:essentia2'}, the essential spectrum of $L_n\mathcal{J}$ is $\pm\varrho_{n,c}(\lambda)$ for $ \lambda>0$ which is  the whole imaginary axis. In view of \eqref{eq:kerne2} and \eqref{eq:genera kerne2}, the kernel and generalized kernel of $\mathcal{J}L_n$ is $Q$ and $\partial_x^{-1}\frac{\partial Q}{\partial c}$, respectively. The proof is concluded.
\end{proof}

With the decomposition of function $z(x)$ in \eqref{decomposition of z}, we can compute the quadratic form related to the operator  $L_n$ and illustrate the spectral information.
The following statement describes  the full spectrum of linearized operator $L_n=H''_{n+1}(Q)+cH''_n(Q)$ for $n\geq1$.

\begin{lemma} \label{le3.9} For $n\geq1$ and any $z\in H^{\frac{n}{2}}_{od}(\R)$, we have $\langle L_{n}z,z\rangle\geq0$ and $\langle L_{n}z,z\rangle=0$ if and only if $z$ is a multiple of $Q_x$. In $H^{\frac{n}{2}}_{ev}(\R)$ and for odd $n$, the operator $L_{n}$ has exactly one negative eigenvalue and zero is not an eigenvalue any more; In $H^{\frac{n}{2}}_{ev}(\R)$ and for $n$ even, the operator $L_{n}$ has no negative eigenvalue.
\end{lemma}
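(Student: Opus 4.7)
The plan is to use the closure relation \eqref{eq:closure relation} to decompose any $z\in H^{n/2}(\R)$ over the eigenfunctions and generalized kernel of $\mathcal{J}L_n$, and then to read off the sign of $\langle L_n z,z\rangle$ from the diagonal action of $L_n$ on this basis. Concretely, as in \eqref{decomposition of z}, I would write
\[
z(x) = \int_0^{\infty}\!\bigl[\alpha(\lambda)(N\bar N^{\ast})_x(x,\lambda) + \alpha^{\ast}(\lambda)(N^{\ast}\bar N)_x(x,\lambda)\bigr]\rmd\lambda + \beta Q_x + \gamma\,\partial_c Q.
\]
Since $Q_x$ is odd and $\partial_c Q$ is even, the parity of $z$ forces $\gamma=0$ when $z$ is odd and $\beta=0$ when $z$ is even, which is the structural observation driving the whole argument.

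Applying $L_n$ termwise via \eqref{eq:essentia2}, \eqref{eq:essentia2'}, \eqref{eq:kerne2} and \eqref{eq:genera kerne2} yields
\[
L_n z = \int_0^{\infty}\!\bigl[\alpha(\lambda)\varrho_{n,c}(\lambda)\,N\bar N^{\ast}+\alpha^{\ast}(\lambda)\varrho_{n,c}^{\ast}(\lambda)\,N^{\ast}\bar N\bigr]\rmd\lambda + \gamma(-1)^{n}c^{n-1}Q,
\]
where the identity $L_n\partial_c Q=(-1)^{n}c^{n-1}Q$ can be seen by differentiating \eqref{eq:1-sol variaprinciple} in $c$ to get $L_n\partial_c Q=-H_n'(Q_c)$ and iterating the scalar relation $H'_{k+1}(Q_c)=-cH'_k(Q_c)$ from $H_2'(Q)=-cQ$ (which is just \eqref{eq:stationary}). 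Pairing with $z$ and using the biorthogonality \eqref{eq:product1}--\eqref{eq:product3}, together with the kernel-orthogonality $\int (N\bar N^{\ast})(\cdot,\lambda)\,Q_x\,\rmd x=0$ (which is \eqref{2.18} specialized to $N=1$) and the analogous $\int (N\bar N^{\ast})(\cdot,\lambda)\,\partial_c Q\,\rmd x=0$ (which follows by applying the closure relation to the purely discrete vector $\partial_c Q$ and identifying coefficients), all cross terms collapse and one is left with
\[
\langle L_n z,z\rangle \;=\; 4\pi\!\int_0^{\infty}\!\lambda\,\mathrm{Im}\,\varrho_{n,c}(\lambda)\,|\alpha(\lambda)|^2\,\rmd\lambda \;+\; (-1)^{n}\pi c^{n-1}\gamma^{2}.
\]
In view of \eqref{eq:symbol JL}, $\mathrm{Im}\,\varrho_{n,c}(\lambda)=\tfrac{2^{n}}{n+1}\lambda^{n+1}+\tfrac{2^{n-1}c}{n}\lambda^{n}>0$ for $\lambda>0$, so the integral term is non-negative and vanishes iff $\alpha\equiv0$.

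The three statements of the lemma then follow by case analysis. For $z$ odd, $\gamma=0$ and $\langle L_n z,z\rangle\ge 0$, with equality forcing $\alpha\equiv0$ and hence $z=\beta Q_x$, i.e.\ a multiple of $Q_x$. For $z$ even and $n$ even, both terms are non-negative and vanish only at $z=0$, so $L_n\ge0$ on the even sector. For $z$ even and $n$ odd, the coefficient $(-1)^n c^{n-1}$ is negative and the test vector $z=\partial_c Q$ gives $\langle L_n z,z\rangle=-\pi c^{n-1}<0$, providing a one-dimensional negative subspace; combined with the non-negativity of the $\alpha$-integral this pins down exactly one negative eigenvalue on the even sector. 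In each even case, the equation $L_n z=0$ forces $\alpha\equiv0$ (since $\varrho_{n,c}(\lambda)\neq0$ for $\lambda>0$) and $\gamma=0$ (from the coefficient of $Q$), so $z=0$ and zero is not an eigenvalue on the even sector.

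The main obstacle is the cross-term bookkeeping between the continuous spectral expansion and the discrete directions $Q_x,\partial_c Q$: the squared eigenfunctions $N\bar N^{\ast}$ do not decay at infinity, so integration by parts is not freely available, and the vanishing of every mixed term has to be extracted from the biorthogonality relations \eqref{eq:product1}--\eqref{eq:product3} supplemented by the kernel-orthogonality identities derived from \eqref{2.18} and the closure relation itself. A secondary subtlety is the sharpening of \eqref{eq:genera kerne2} from the equality of derivatives $\mathcal{J}L_n\partial_c Q=(-1)^{n}c^{n-1}Q_x$ to the equality $L_n\partial_c Q=(-1)^{n}c^{n-1}Q$ without an extraneous constant, which is justified by the fact that both sides lie in $L^2(\R)$ so the integration constant must vanish.
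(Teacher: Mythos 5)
Your proposal is correct and follows essentially the same route as the paper: decompose $z$ via the closure relation \eqref{decomposition of z}, use the diagonal action of $L_n$ on the squared-eigenfunction basis together with the biorthogonality relations to kill the cross terms, arrive at the quadratic form $\langle L_nz,z\rangle=\text{(positive continuous part)}+(-1)^n\pi c^{n-1}\gamma^2$, and conclude by the same parity case analysis with $\partial_cQ$ as the negative direction for odd $n$. The extra justifications you supply (deriving $L_n\partial_cQ=(-1)^nc^{n-1}Q$ from \eqref{eq:1-sol variaprinciple} and spelling out the orthogonality of the continuous part against $Q_x$ and $\partial_cQ$) are harmless elaborations of steps the paper asserts more briefly.
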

\begin{proof}
For any $z(x)\in H^{\frac{n}{2}}(\R)$, we have the decomposition \eqref{decomposition of z}, then we can evaluate the quadratic form $\langle L_nz,z \rangle$ as follows,
\begin{eqnarray} \label{quadratic}
 &&\langle L_nz,z \rangle=\langle\int_0^{+\infty}\bigg(\alpha(\lambda)L_n\big(N\bar{N}^\ast\big)_x(x,\lambda)+\alpha^\ast(\lambda)L_n\big(N^\ast\bar{N}\big)_x(x,\lambda)\bigg)\rmd \lambda,\nonumber\\&& \int_0^{+\infty}\bigg(\alpha(\lambda)\big(N\bar{N}^\ast\big)_x(x,\lambda)+\alpha^\ast(\lambda)\big(N^\ast\bar{N}\big)_x(x,\lambda)\bigg)^\ast\rmd \lambda\rangle\nonumber\\&&+2\gamma\langle \int_0^{+\infty}\bigg(\alpha(\lambda)L_n\big(N\bar{N}^\ast\big)_x(x,\lambda)+\alpha^\ast(\lambda)L_n\big(N^\ast\bar{N}\big)_x(x,\lambda)\bigg)\rmd \lambda,\frac{\partial Q}{\partial c}\rangle\nonumber\\&&+\gamma^2\langle L_n\frac{\partial Q}{\partial c},\frac{\partial Q}{\partial c}\rangle=I+II+III.
\end{eqnarray}
First it is  noticed from \eqref{eq:essentia2} and the zero inner product property of the two sets \eqref{eq:set JL} and \eqref{eq:set LJ} that
\begin{eqnarray}\label{second }
II&=&2\gamma\langle \int_0^{+\infty}\bigg(\alpha(\lambda)L_n\big(N\bar{N}^\ast\big)_x(x,\lambda)+\alpha^\ast(\lambda)L_n\big(N^\ast\bar{N}\big)_x(x,\lambda)\bigg)\rmd \lambda,\frac{\partial Q}{\partial c}\rangle\nonumber\\&=&2\gamma\int_0^{+\infty}\langle \alpha(\lambda)\varrho_{n,c}(\lambda)\big(N\bar{N}^\ast\big)(x,\lambda)+\alpha^\ast(\lambda)\varrho^\ast_{n,c}(\lambda)\big(N^\ast\bar{N}\big)(x,\lambda),\frac{\partial Q}{\partial c}\rangle P(\lambda)\varrho_{n,c}(\lambda)\rmd \lambda\nonumber\\&=&0.
\end{eqnarray}
For the third term of \eqref{quadratic}, a direct computation  shows that,
\begin{eqnarray} \label{third }
 III=\gamma^2\langle (-1)^nc^{n-1}Q,\frac{\partial Q}{\partial c}\rangle=\gamma^2(-1)^nc^{n-1}\frac{\rmd H_1(Q)}{\rmd c}=\pi\gamma^2 (-1)^{n}c^{n-1}.
\end{eqnarray}
To deal with the first term in \eqref{quadratic}, using  \eqref{eq:essentia2} and \eqref{eq:product1} yields that
\begin{eqnarray} \label{first term }
&&I=\langle\int_0^{+\infty}\bigg(\alpha(\lambda)L_n\big(N\bar{N}^\ast\big)_x(x,\lambda)+\alpha^\ast(\lambda)L_n\big(N^\ast\bar{N}\big)_x(x,\lambda)\bigg)\rmd \lambda,\nonumber\\&& \int_0^{+\infty}\bigg(\alpha^\ast(\lambda)\big(N^\ast\bar{N}\big)_x(x,\lambda)+\alpha(\lambda)\big(N\bar{N}^\ast\big)_x(x,\lambda)\bigg)\rmd \lambda\rangle\nonumber\\
&=&\langle\int_0^{+\infty}\bigg(\alpha(\lambda)\varrho_{n,c}(\lambda)\big(N\bar{N}^\ast\big)(x,\lambda)+\alpha^\ast(\lambda)\varrho^\ast_{n,c}(\lambda)\big(N^\ast\bar{N}\big)(x,\lambda)\bigg)\rmd \lambda,\nonumber\\&& \int_0^{+\infty}\bigg(\alpha^\ast(\lambda)\big(N^\ast\bar{N}\big)_x(x,\lambda)+\alpha(\lambda)\big(N\bar{N}^\ast\big)_x(x,\lambda)\bigg)\rmd \lambda\rangle\nonumber\\
&=& \int_{\R_+^2}\varrho_{n,c}(\lambda)\alpha(\lambda)\alpha^\ast(\lambda')\langle\big(N\bar{N}^\ast\big)(x,\lambda),\big(N^\ast\bar{N}\big)_x(x,\lambda')\rangle\rmd \lambda\rmd \lambda'\nonumber\\
&+&\int_{\R_+^2}\varrho^\ast_{n,c}(\lambda)\alpha^\ast(\lambda)\alpha(\lambda')\langle\big(N^\ast\bar{N}\big)(x,\lambda),\big(N\bar{N}^\ast\big)_x(x,\lambda')\rangle\rmd \lambda\rmd \lambda'\nonumber\\
&=& \int_0^{+\infty}2\pi i\big(\varrho^\ast_{n,c}(\lambda)-\varrho_{n,c}(\lambda)\big)|\alpha(\lambda)|^2\rmd \lambda\nonumber\\
&=&2^{n+1}\pi\int_0^{+\infty}|\alpha(\lambda)|^2\lambda^{n+1}\big(\frac{2\lambda}{n+1}+\frac{1}{n}\big)\rmd \lambda\geq0,
\end{eqnarray}
where $I=0$ holds if and only if $\alpha(\lambda)=0$. Combining \eqref{first term }, \eqref{second } and \eqref{third }, one has
\begin{eqnarray} \label{quadratic form}
  \eqref{quadratic}=2^{n+1}\pi\int_0^{+\infty}|\alpha(\lambda)|^2\lambda^{n+1}\big(\frac{2\lambda}{n+1}+\frac{1}{n}\big)\rmd \lambda+\pi\gamma^2(-1)^{n}c^{n-1}.
\end{eqnarray}

For $z\in H^{\frac{n}{2}}_{od}(\R)$, we have $\gamma=0$, then \eqref{quadratic form} and \eqref{first term } reveal that $\langle L_{n}z,z\rangle\geq0$. Moreover, $\langle L_{n}z,z\rangle=0$ infers that $\alpha(\lambda)=0$, therefore, $z=\beta Q_x$ for $\beta\neq0$.

If $z\in H^{\frac{n}{2}}_{ev}(\R)$, we then have $\beta=0$,  In the hyperplane $\gamma=0$, $\langle L_{n}z,z\rangle\geq0$ and $\langle L_{n}z,z\rangle=0$ if and only if  $\alpha(\lambda)=0$, then one has $z=0$. Therefore, $\langle L_{n}z,z\rangle>0$ in the hyperplane $\gamma=0$ and which implies that $L_n$ can have at most one negative eigenvalue. If $n$ is odd, then  $L_n\frac{\partial Q}{\partial c}=-c^{n-1}Q<0$ and $\langle L_n\frac{\partial Q}{\partial c},\frac{\partial Q}{\partial c}\rangle=-c^{n-1}\frac{\rmd H_1(Q)}{\rmd c}=\pi (-1)^{n}c^{n-1}<0$. Therefore, $L_n$ has exactly one negative eigenvalue. If $n$ is even, then from \eqref{quadratic form} or $\langle L_n\frac{\partial Q}{\partial c},\frac{\partial Q}{\partial c}\rangle=(-1)^{n}c^{n-1}\frac{\rmd H_1(Q)}{\rmd c}=\pi c^{n-1}>0$, which means that $L_n$ has no negative eigenvalue. This completes the proof of  Lemma \ref{le3.9}.
\end{proof}

\begin{remark}\label{re:4.0}
Lemma \ref{le3.9} states that for $k\in\mathbb{N}$, the inertia of the operators $L_n$ satisfy $in(L_{2k})=(0,1)$ and $ in(L_{2k-1})=(1,1)$. One can verify, by Weyl's essential spectrum theorem, that the essential spectrum of $L_n$ ($n\geq2$) is the interval $[0,+\infty)$. It is inferred from $L_{2k}=\mathcal{R}(Q)L_{2k-1}$ or \eqref{quadratic form}  that the operator $L_{2k}$ has a positive eigenvalue $\nu=O(c^{2k})$ (with $L^2$-eigenfunctions), which may possibly be embedded into its continuous spectrum.
\end{remark}

As a direct consequence of Lemma \ref{le3.9}, one has the following spectral information of higher order linearized operators $\mathcal {T}_{n,j}:=H''_{n+2}(Q_{c_j})+(c_1+c_2)H''_{n+1}(Q_{c_j})+c_1c_2H''_n(Q_{c_j})$  (defined in $L^2(\R)$ with domain $H^{2}(\R)$) with $n\geq1$, $j=1,2$ and $c_1\leq c_2$, which are related closely to stability problem of the double solitons $U^{(2)}$. Following the same line of the proof of Lemma \ref{le3.9}, we have
\begin{corollary} \label{co3.10} For $n\geq1$ and $c_1=c_2=c$, we have  $\mathcal {T}_{n,1}=\mathcal {T}_{n,2}\geq0$, and the eigenvalue zero is double with eigenfunctions $Q_c'$ and $\frac{\partial Q_c}{\partial c}$.
For $n\geq1$ odd and $c_1<c_2$,  the operator $\mathcal {T}_{n,1}$ has one negative eigenvalue and $\mathcal {T}_{n,2}\geq0$ is positive. For $n\geq1$ even and $c_1<c_2$,  the operator $\mathcal {T}_{n,1}$ is positive and $\mathcal {T}_{n,2}\geq0$ has one negative eigenvalue.
$\mathcal {T}_{n,j}$ have zero as a simple eigenvalue with associated eigenfunctions $Q'_{c_j}$.
\end{corollary}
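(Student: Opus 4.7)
The plan is to build on the spectral analysis of $L_n(c) := H''_{n+1}(Q_c) + c H''_n(Q_c)$ already carried out in Lemma \ref{le3.9}, using the recursion operator to incorporate the single extra factor that distinguishes $\mathcal{T}_{n,j}$ from $L_n(c_j)$. Let $k \in \{1,2\}$ be complementary to $j$. Regrouping the three terms defining $\mathcal{T}_{n,j}$ as
\[
\mathcal{T}_{n,j} = \big(H''_{n+2}(Q_{c_j}) + c_j H''_{n+1}(Q_{c_j})\big) + c_k \big(H''_{n+1}(Q_{c_j}) + c_j H''_n(Q_{c_j})\big),
\]
and applying Proposition \ref{pr2.3} to the first bracket, I obtain the key factorization
\[
\mathcal{T}_{n,j} = \big(\mathcal{R}(Q_{c_j}) + c_k\big)\, L_n(c_j).
\]

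With this factorization in hand, I would replay the quadratic-form argument of Lemma \ref{le3.9} with soliton profile $Q = Q_{c_j}$. Expanding an arbitrary $z$ in the basis \eqref{decomposition of z}, namely $z = \int_0^\infty[\alpha(\lambda)(N\bar N^*)_x + \alpha^*(\lambda)(N^*\bar N)_x]\,\rmd\lambda + \beta Q'_{c_j} + \gamma\,\partial Q_{c_j}/\partial c_j$, the computation in the proof of Lemma \ref{le3.9} shows that $L_n(c_j) z$ has a clean expansion in $(N\bar N^*)$, $(N^*\bar N)$, and $Q_{c_j}$ with coefficients $\alpha\varrho_{n,c_j}(\lambda)$, $\alpha^*\varrho_{n,c_j}^*(\lambda)$ and $(-1)^n c_j^{n-1}\gamma$ respectively (while $\beta Q'_{c_j}$ is killed). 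The point is that, by \eqref{eq:Reigen} and \eqref{eq:R eigen1} of Lemma \ref{le3.6}, the additional factor $\mathcal{R}(Q_{c_j}) + c_k$ acts as the scalar multiplier $(2\lambda + c_k)$ on the continuous eigenfunctions and as $(c_k - c_j)$ on $Q_{c_j}$. Invoking the orthogonality relations \eqref{eq:product1}--\eqref{eq:product3} then yields
\[
\langle \mathcal{T}_{n,j} z, z\rangle = 2^{n+1}\pi \int_0^\infty |\alpha(\lambda)|^2 \lambda^{n+1}\Big(\tfrac{2\lambda}{n+1}+\tfrac{1}{n}\Big)(2\lambda + c_k)\,\rmd\lambda + \pi (-1)^n c_j^{n-1}(c_k - c_j)\,\gamma^2.
\]

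The remainder is a routine case split. Since $c_k > 0$, the integral is nonnegative and vanishes precisely when $\alpha \equiv 0$. When $c_1 = c_2 = c$ the $\gamma^2$-coefficient vanishes identically, so $\mathcal{T}_{n,1} = \mathcal{T}_{n,2} \geq 0$ with a two-dimensional kernel spanned by $Q'_c$ and $\partial Q_c/\partial c$ (the latter because $(\mathcal{R}(Q_c)+c)Q_c = 0$). When $c_1 < c_2$ the sign of $(-1)^n c_j^{n-1}(c_k - c_j)$ alternates with the parity of $n$ and with $j$: for odd $n$ it is negative exactly when $j = 1$ and positive when $j = 2$; for even $n$ the signs reverse. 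In every negative instance, restricting to the codimension-one hyperplane $\{\gamma = 0\}$ leaves the quadratic form nonnegative, so $\mathcal{T}_{n,j}$ picks up exactly one negative eigenvalue; in the positive instances $\mathcal{T}_{n,j} \geq 0$. In all four subcases $Q'_{c_j}$ remains in the kernel because $L_n(c_j) Q'_{c_j} = 0$.

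The main obstacle is technical rather than conceptual: one must verify that the squared-eigenfunction expansion diagonalizing $L_n(c_j)$ simultaneously diagonalizes the additional factor $\mathcal{R}(Q_{c_j}) + c_k$. This is exactly the content of the eigenvalue identities \eqref{eq:Reigen}--\eqref{eq:R eigen1}, obtained in Lemma \ref{le3.6} from the bi-Hamiltonian operator $\mathcal{R}_{12}$; once those are invoked, the argument is a near-mechanical repetition of the quadratic form computation \eqref{quadratic form}, with the scalar weight $\lambda^{n+1}(\tfrac{2\lambda}{n+1}+\tfrac{1}{n})$ replaced by $\lambda^{n+1}(\tfrac{2\lambda}{n+1}+\tfrac{1}{n})(2\lambda + c_k)$ and the constant $(-1)^n c_j^{n-1}$ replaced by $(-1)^n c_j^{n-1}(c_k - c_j)$.
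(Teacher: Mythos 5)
Your proposal is correct and follows essentially the same route as the paper: the paper likewise replays the quadratic-form computation of Lemma \ref{le3.9} for $\mathcal{T}_{n,j}$ using the decomposition \eqref{decomposition of z}, with the key input being the identity $\mathcal{T}_{n,j}\frac{\partial Q_{c_j}}{\partial c_j}=(c_j-c_k)(-c_j)^{n-1}Q_{c_j}$, which coincides with your coefficient $(-1)^n c_j^{n-1}(c_k-c_j)$. Your explicit factorization $\mathcal{T}_{n,j}=(\mathcal{R}(Q_{c_j})+c_k)L_n(c_j)$ via Proposition \ref{pr2.3} and Lemma \ref{le3.6} simply makes transparent the verification that the paper leaves as ``one can verify,'' and the extra positive weight $(2\lambda+c_k)$ on the continuous part changes nothing in the sign analysis.
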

\begin{proof}
Similar to the proof of Lemma \ref{le3.9}, we study quadratic form related to the operator $\mathcal {T}_{n,j}$ with $z$ possessing the decomposition \eqref{decomposition of z}. One can verify that
\begin{equation}\label{direction}
\mathcal {T}_{n,j}\frac{\partial Q_{c_j}}{\partial c_j}=(c_j-c_k)(-c_j)^{n-1}Q_{c_j}, \quad\text{for}\quad k\neq j \ \text{and}\ j,k=1,2.
\end{equation}
In particular, if $c_1=c_2=c$, the function $\frac{\partial Q_{c}}{\partial c}$  belongs to the kernel of  $\mathcal {T}_{n,1}$ and $\mathcal {T}_{n,2}$. Notice that $Q'_{c}$ always belongs  to the kernel of which, therefore, zero eigenvalue is double with eigenfunctions $Q_c'$ and $\frac{\partial Q_c}{\partial c}$.  The non-negativeness of $\mathcal {T}_{n,1}$ and $\mathcal {T}_{n,2}$ follow from the same argument of Lemma \ref{le3.9}.

If $c_1<c_2$, then by \eqref{direction} and following the same line of the proof of Lemma \ref{le3.9}, the operator $\mathcal {T}_{2k+1,1}$ has a negative eigenvalue and $\mathcal {T}_{2k+1,2}\geq0$, their zero eigenvalue are simple with associated eigenfunction $Q'_{c_j}$; the operator $\mathcal {T}_{2k,1}\geq0$ and $\mathcal {T}_{2k,2}$ has a negative eigenvalue.
\end{proof}

\begin{remark}\label{re:4.3}
 The linearized operator $\mathcal{L}_2$ defined in \eqref{eq:linearized n-soliton operator} around the double solitons profile $U^{(2)}$  can be represented as follows:
 \[\mathcal{L}_2=-\frac43\partial_x^2+2HU_x+2UH\partial_x+2H(U_x\cdot)+2U\partial_x+4U^{2}+(c_1+c_2)(-H\partial_x-2U)+c_1c_2, \ U:=U^{(2)},
 \]
 which possesses the following property: the spectra $\sigma(\mathcal{L}_2)$ trends to the union of $\sigma(\mathcal {T}_{1,1})$ and $\sigma(\mathcal {T}_{1,2})$ as $t$ goes to infinity. Since from Corollary \ref{co3.10}, we know the inertia $in(\mathcal {T}_{1,1})=(1,1)$ and $in(\mathcal {T}_{1,2})=(0,1)$, then it reveals that,
 $$in(\mathcal{L}_2)=in(\mathcal {T}_{1,1})+in(\mathcal {T}_{1,2})=(1,2).$$
 In this sense, Corollary \ref{co3.10} at the case $n=1$ gives an alternative proof of Theorem 9 in \cite{LN}, which is the key spectral property in showing the orbital stability of the double solitons of the BO equation.
\end{remark}
\subsection{The spectrum of linearized operator around the BO $m$-solitons}
In order to prove Theorem \ref{thm1.1}, we need to know the spectral information of the operator $\mathcal L_m$ \eqref{eq:linearized n-soliton operator}. More precisely, the inertia of $\mathcal L_m$ called $in(\mathcal L_m)$ has to be determined. The aim of this subsection is to show  the following result.
\begin{lemma}\label{lemma3.6}
The operator $\mathcal L_m$ defined in $L^2(\R)$ with domain $H^{\frac{m}{2}}(\R)$ verifies the following spectral property
\begin{equation}\label{inertia L}
in(\mathcal L_m)=\big(n(\mathcal L_m),z(\mathcal L_m)\big)=\big([\frac{m+1}{2}],m\big).
\end{equation}
\end{lemma}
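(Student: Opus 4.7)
The plan is to chain together the two main machines already assembled in the paper: the isoinertial framework of Section~\ref{sec_3} and the one-soliton spectral calculation of Lemma~\ref{le3.9}. First, I would apply Proposition~\ref{pr5.1} with $X_1=H^{(N-1)/2}(\R)$, $X_2=H^{N/2}(\R)$, $V=S_N$, and $u(t)=U^{(N)}(t)$ to conclude that $t\mapsto in(\mathcal L_N(t))$ is constant. Then I would use the asymptotic decomposition \eqref{eq:asympotic behavior}, together with Theorem~\ref{th5.2} and Corollary~\ref{co5.3}, identifying $A+B^j$ with the translated linearization $L_{N,j}$ around the $j$th escaping soliton $Q_{c_j}$ and $R_n$ with the vanishing nonlinear interaction terms. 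The output is that, for $t$ large enough,
\begin{equation*}
in(\mathcal L_N)=\sum_{j=1}^{N}in(L_{N,j}),
\end{equation*}
provided one checks that the kernel dimension on the left is at least $N$ (which follows since each $Q'_{c_j}(\cdot-c_jt-x_j)$ gives a linearly independent element of $\ker\mathcal L_N$ obtained by translating the $N$ spatial parameters $x_j$ in $U^{(N)}$).

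Having reduced matters to the single-soliton operators $L_{N,j}$, I would compute $in(L_{N,j})$ directly from the factorization
\begin{equation*}
L_{N,j}=\Bigl(\prod_{k\neq j}(\mathcal R(Q_{c_j})+c_k)\Bigr)\bigl(H_2''(Q_{c_j})+c_jH_1''(Q_{c_j})\bigr)=\sum_{n=1}^{N}\sigma_{j,N-n}L_n^{(c_j)},
\end{equation*}
exploiting the completeness relation \eqref{eq:closure relation} and the computation performed in the proof of Lemma~\ref{le3.9}. Expanding any test function $z\in H^{N/2}(\R)$ as in \eqref{decomposition of z}, the continuous-spectrum part of $\langle L_{N,j}z,z\rangle$ becomes a sum, with coefficients $\sigma_{j,N-n}>0$, of the manifestly non-negative integrals that already appeared in \eqref{first term }; the $\beta Q_{c_j}'$ direction lies in the kernel of every $L_n^{(c_j)}$, and the cross terms vanish by the biorthogonality between the two eigenfunction sets \eqref{eq:set JL} and \eqref{eq:set LJ}. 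Thus the signature of $L_{N,j}$ is entirely governed by its value on the direction $\partial Q_{c_j}/\partial c_j$.

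The key algebraic identity I would then prove is
\begin{equation*}
L_{N,j}\,\frac{\partial Q_{c_j}}{\partial c_j}=-\Bigl(\prod_{k\neq j}(c_k-c_j)\Bigr)Q_{c_j},
\end{equation*}
which follows by applying the pointwise relation $L_n^{(c_j)}(\partial Q_{c_j}/\partial c_j)=(-1)^n c_j^{n-1}Q_{c_j}$ (see \eqref{eq:genera kerne2}) and recognizing the resulting sum as $\prod_{k\neq j}(c_k-c_j)$ evaluated via Vieta's formulas. Pairing with $\partial Q_{c_j}/\partial c_j$ and using \eqref{eq:product3} yields
\begin{equation*}
\Bigl\langle L_{N,j}\frac{\partial Q_{c_j}}{\partial c_j},\frac{\partial Q_{c_j}}{\partial c_j}\Bigr\rangle=-\pi\prod_{k\neq j}(c_k-c_j),
\end{equation*}
whose sign is $(-1)^{j}$ because the ordered product contains exactly $j-1$ negative factors. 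Combined with the non-negativity on the continuous part and the kernel generated by $Q'_{c_j}$, this delivers $in(L_{N,2k-1})=(1,1)$ and $in(L_{N,2k})=(0,1)$, and summing over $j$ produces $([(N+1)/2],N)$.

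The most delicate step I anticipate is justifying the last line of step~1: showing that the kernel of the limiting operator does not jump, i.e.\ that $z(\mathcal L_N)\ge\sum_j z(L_{N,j})=N$, so that Corollary~\ref{co5.3} is applicable. This is what ensures additivity of the inertia and rules out eigenvalues leaking toward $0$ in the limit; one supplies the $N$ required kernel elements explicitly via the spatial-translation modes $\partial U^{(N)}/\partial x_j$, using the variational principle of Proposition~\ref{pr2.2}. Once this is in place, the remaining verification of hypotheses (A1)--(A8) is routine for the BO linearization, analogous to \cite{LN,LW}.
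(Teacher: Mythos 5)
Your proposal is correct and follows essentially the same route as the paper: isoinertiality via Proposition~\ref{pr5.1} and the asymptotic splitting through Theorem~\ref{th5.2}/Corollary~\ref{co5.3} reduce the problem to the operators $L_{N,j}$, whose inertia is then read off from the quadratic form in the squared-eigenfunction basis \eqref{decomposition of z} together with the identity $L_{N,j}\partial_{c_j}Q_{c_j}=-\prod_{k\neq j}(c_k-c_j)Q_{c_j}$ and the resulting sign alternation, exactly as in Proposition~\ref{pr3.11}. Your explicit verification of the kernel lower bound $z(\mathcal L_N)\geq N$ via the translation modes is a point the paper leaves implicit, but it does not change the argument.
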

To this aim, for $j=1,2...,m$, recall that $L_{m,j}=S_m''(Q_{c_j})$ is defined in \eqref{formula of L}. The spectrum of $\mathcal L_m$ tends to the unions of $L_{m,j}$, that is $\sigma(\mathcal L_m)\rightarrow\bigcup_{ j=1}^m\sigma(L_{m,j})$ as $t\rightarrow+\infty$. The result \eqref{inertia L} follows directly from the following statement which concerning the inertia of the operators $L_{m,j}$, $j=1,2,\cdot\cdot\cdot,m$.

\begin{proposition} \label{pr3.11}  (1). $L_{m,2k-1}$  (defined in $L^2(\R)$ with domain $H^{m}(\R)$) has zero as a simple eigenvalue and exactly one negative eigenvalue for  $1\leq k\leq[\frac{m+1}{2}]$, i.e, $in(L_{m,2k-1})=(1,1)$;
\ \ \ (2). $L_{m,2k}$  (defined in $L^2(\R)$ with domain $H^{m}(\R)$) has zero as a simple eigenvalue and no negative eigenvalues for $1\leq k\leq[\frac{m}{2}]$, i.e, $in(L_{m,2k})=(0,1)$.
\end{proposition}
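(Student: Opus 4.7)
The plan is to mimic Lemma \ref{le3.9} and reduce the spectral count of $L_{N,j}$ to an evaluation of the quadratic form $\langle L_{N,j} z, z\rangle$ on the squared-eigenfunction basis \eqref{decomposition of z} attached to the profile $Q_{c_j}$. Using the factorization \eqref{formula of L}, I would write $L_{N,j}=P_j(\mathcal R(Q_{c_j}))\,L_1(c_j)$ with $P_j(x):=\prod_{k\neq j}(x+c_k)$, and combine Lemma \ref{le3.6} (which gives $\mathcal R(Q_{c_j})(N\bar N^*)(\cdot,\lambda)=2\lambda(N\bar N^*)(\cdot,\lambda)$ and $\mathcal R(Q_{c_j})Q_{c_j}=-c_j Q_{c_j}$) with the action of $L_1(c_j)$ on the basis recorded in Section \ref{sec_4} to deduce
\[
L_{N,j}Q_{c_j,x}=0,\quad L_{N,j}\frac{\partial Q_{c_j}}{\partial c_j}=-\Bigl(\prod_{k\neq j}(c_k-c_j)\Bigr)Q_{c_j},\quad L_{N,j}(N\bar N^*)_x=\varrho_{1,c_j}(\lambda)P_j(2\lambda)(N\bar N^*).
\]
The second identity can alternatively be recovered by differentiating the Euler--Lagrange equation \eqref{1.10} at $u=Q_{c_j}$ with respect to $c_j$ and iterating Proposition \ref{pr2.3}.

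For an arbitrary $z\in H^{N/2}(\R)$ decomposed via \eqref{decomposition of z}, I would then substitute these identities into $\langle L_{N,j}z,z\rangle$ and invoke the bi-orthogonality relations \eqref{eq:product1}--\eqref{eq:product3} together with the consequence $\langle Q_{c_j},(N\bar N^*)_x\rangle=0$ of \eqref{2.18} (after integration by parts). Exactly as in the proof of Lemma \ref{le3.9}, the cross pairings between the continuous part and the $\gamma$-component drop, leaving the block-diagonal expression
\[
\langle L_{N,j}z,z\rangle \;=\; \mathcal C(\alpha)\;-\;\pi\gamma^{2}\prod_{k\neq j}(c_k-c_j),
\]
where $\mathcal C(\alpha)$ is a positive linear combination (coefficients $\sigma_{j,N-n}>0$) of the non-negative integrals $I_n$ from Lemma \ref{le3.9}, hence $\mathcal C(\alpha)\geq 0$ with equality iff $\alpha\equiv 0$; positivity is sharpened by the observation that $P_j(2\lambda)>0$ for every $\lambda>0$.

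A sign count then concludes the proof: since $0<c_1<\dots<c_N$, the product $\prod_{k\neq j}(c_k-c_j)$ carries sign $(-1)^{j-1}$. For $j=2k-1$ it is positive, so the $\gamma^{2}$-term is strictly negative and block-diagonality forces $n(L_{N,2k-1})=1$, giving $in(L_{N,2k-1})=(1,1)$. For $j=2k$ it is negative, so the full form is non-negative and $in(L_{N,2k})=(0,1)$. In both cases, $L_{N,j}z=0$ forces $\alpha\equiv 0$ (because $\varrho_{1,c_j}(\lambda)P_j(2\lambda)\neq 0$ for $\lambda>0$) and $\gamma=0$ (since the $c_k$'s are distinct), so $\ker L_{N,j}=\R\cdot Q_{c_j,x}$ is one-dimensional.

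The step I expect to be the main obstacle is the rigorous justification of the block-diagonalization: verifying that the distributional cross pairings $\langle L_{N,j}z_c,\partial Q_{c_j}/\partial c_j\rangle$ and the off-diagonal continuous-continuous terms vanish, carefully tracking the polynomial weight $P_j(2\lambda)$ through the closure relation \eqref{eq:closure relation}. A secondary but elementary identity that must be carefully bookkept against indexing conventions is $\sum_{n=1}^{N}\sigma_{j,N-n}(-1)^{n}c_j^{n-1}=-P_j(-c_j)=-\prod_{k\neq j}(c_k-c_j)$, which produces the crucial prefactor controlling the sign of the $\gamma^{2}$-term.
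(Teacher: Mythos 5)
Your proposal is correct and follows essentially the same route as the paper: the paper also evaluates $\langle L_{N,j}z,z\rangle$ on the decomposition \eqref{decomposition of z}, uses $L_{N,j}\frac{\partial Q_{c_j}}{\partial c_j}=-\prod_{k\neq j}(c_k-c_j)Q_{c_j}$ as in \eqref{eigenvalue}, obtains the same block-diagonal form (its continuous-spectrum weight $\sum_{n}\sigma_{j,N-n}\varrho_{n,c_j}(\lambda)$ agrees, up to the paper's normalization conventions, with your $\varrho_{1,c_j}(\lambda)P_j(2\lambda)$ and is positive for $\lambda>0$), and concludes by the same sign count on $\prod_{k\neq j}(c_k-c_j)$, together with the hyperplane/parity argument that you invoke as ``block-diagonality'' to pin $in(L_{N,2k-1})=(1,1)$ and $in(L_{N,2k})=(0,1)$.
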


\begin{proof} The proof follows the same line of the proof of Lemma \ref{le3.9}.
We consider the operator $L_{m,j}=S_{m}''(Q_{c_j})$ for $1\leq j \leq m$ and compute the quadratic form $\langle L_{m,j}z,z\rangle$ under a special decomposition of $z$ \eqref{decomposition of z}. Recall from \eqref{formula of L} that the form of $L_{m,j}$ which is a combination of the operators $H_{n+1}''(Q_{c_j})+c_jH_{n}''(Q_{c_j})$, and those $\sigma_{j,k}>0$ are the elementally symmetric functions of $c_1,c_2,\cdot\cdot\cdot,c_{j-1},c_{j+1},\cdot\cdot\cdot,c_m$. Moreover, one has
 \begin{equation}\label{eigenvalue}
 L_{m,j}\frac{\partial Q_{c_j}}{\partial c_j}=-\prod_{k\neq j}^m(c_k-c_j)Q_{c_j}:=\Gamma_jQ_{c_j}.
\end{equation}
The quadratic form $\langle L_{m,j}z,z\rangle$ (for $z\in H^{\frac{m}{2}}(\R)$) can be evaluated similar to \eqref{quadratic} as follows
 \begin{eqnarray*}
&&\langle L_{m,j}z,z\rangle=\langle\int_0^{+\infty}\bigg(\alpha(\lambda)L_{m,j}\big(N\bar{N}^\ast\big)_x(x,\lambda)+\alpha^\ast(\lambda)L_{m,j}\big(N^\ast\bar{N}\big)_x(x,\lambda)\bigg)\rmd \lambda,\nonumber\\&& \int_0^{+\infty}\bigg(\alpha(\lambda)\big(N\bar{N}^\ast\big)_x(x,\lambda)+\alpha^\ast(\lambda)\big(N^\ast\bar{N}\big)_x(x,\lambda)\bigg)^\ast\rmd \lambda\rangle\nonumber\\&&+2\gamma\langle \int_0^{+\infty}\bigg(\alpha(\lambda)L_{m,j}\big(N\bar{N}^\ast\big)_x(x,\lambda)+\alpha^\ast(\lambda)L_{m,j}\big(N^\ast\bar{N}\big)_x(x,\lambda)\bigg)\rmd \lambda,\frac{\partial Q_{c_j}}{\partial c_j}\rangle\nonumber\\&&+\gamma^2\langle L_{m,j}\frac{\partial Q_{c_j}}{\partial c_j},\frac{\partial Q_{c_j}}{\partial c_j}\rangle=\sum_{n=1}^m\bigg(2^{n+1}\pi\sigma_{j,m-n}\int_0^{+\infty}|\alpha(\lambda)|^2\lambda^{n+1}\big(\frac{2\lambda}{n+1}+\frac{1}{n}\big)\rmd \lambda\bigg)+\pi\gamma^2\Gamma_j.
 \end{eqnarray*}
One can check that the symbol of the principle part of $ L_{m,j}$ evaluated at $\lambda$ is
\begin{eqnarray}\label{symbol2}
\widehat{S_m''(0)}(\lambda)&=&\sum_{n=1}^m\sigma_{j,m-n}\rho_{n,c_j}(\lambda)>0.
\end{eqnarray}
Then the first term of the quadratic form $\langle L_{m,j}z,z\rangle$ is nonnegative and equals to zero if and only if $\alpha(\lambda)=0$.

If $j$ is even, then in view of  the definition of $\Gamma_j$ \eqref{eigenvalue}, one has $\Gamma_j>0$ and $\langle L_{m,j}z,z\rangle\geq0$ and $\langle L_{m,j}z,z\rangle=0$ if and only if $\alpha(\lambda)=0$ and $\gamma=0$, which indicates that $z=\beta Q'_{c_j}$. Hence $L_{m,j}\geq0$ and zero is simple with associated eigenfunction $Q'_{c_j}$.

If $j$ is odd, then one has $\Gamma_j<0$, we investigate $z$ in $H^{\frac{m}{2}}_{ev}(\R)$ and $H^{\frac{m}{2}}_{od}(\R)$, respectively. If $z\in H^{\frac{m}{2}}_{od}(\R)$, then $\gamma=0$. Then one has $\langle L_{m,j} z,z\rangle\geq0$ and $\langle L_{m,j}z,z\rangle=0$ if and only if  $\alpha(\lambda)=0$. Then $z=\beta Q'_{c_j}$ with $\beta\neq0$, which indicates that zero is simple with associated eigenfunction $Q'_{c_j}$.

If $z\in H^{\frac{m}{2}}_{ev}(\R)$, then $\beta=0$. In the hyperplane $\gamma=0$, $\langle L_{m,j}z,z\rangle\geq0$ and $\langle L_{m,j}z,z\rangle=0$ if and only if  $\alpha(\lambda)$. Therefore, $\langle L_{m,j}z,z\rangle>0$ in the hyperplane $\gamma=0$ and which implies that $L_{m,j}$ can have at most one negative eigenvalue. Since $L_{m,j}\frac{\partial Q_{c_j}}{\partial c_j}=\Gamma_jQ_{c_j}<0$ and $$\langle L_{m,j}\frac{\partial Q_{c_j}}{\partial c_j},\frac{\partial Q_{c_j}}{\partial c_j}\rangle=\Gamma_j\frac{\rmd H_1(Q_{c_j})}{\rmd c_j}<0.$$ Therefore, $L_{m,j}$ has exactly one negative eigenvalue.  This implies the desired result as advertised in the statement of Proposition \ref{pr3.11}.
\end{proof}

\begin{proof}[Proof of Lemma \ref{lemma3.6}]
From the invariance of inertia of $\mathcal L_m$, we know that
\[
in(\mathcal L_m)=\big(n(\mathcal L_m),z(\mathcal L_m)\big)=\sum_{j=1}^min(\mathcal L_{m,j})=\big([\frac{m+1}{2}],m\big).
\]
The proof is concluded.
\end{proof}

\begin{remark}\label{re:4.6}
In view of \eqref{operator identity3} and \eqref{operator identity4}, one may also investigate the spectrum of the operator $\mathcal{J}\mathcal L_m$ to show the spectral stability of the BO $m$-solitons and then the spectrum of the operator $\mathcal L_m$. The idea is similar to the $m=1$ case, by employing the eigenvalue problem \eqref{space lax}, we can derive the eigenvalues and the associated eigenfunctions of the recursion operator around the $m$-solitons profile $U^{ (m)}(x)$. Then we need to show the eigenfunctions plus their derivatives with respect to the eigenvalues $\lambda_j$ ($j=1,2,\cdot\cdot\cdot,m$) form a basis in $L^2(\R)$. Finally, by a direct verification of the quadratic form $\langle\mathcal L_mz,z\rangle$ (with function $z$ decomposes upon the above bases), one can also derive the inertia of the operator $\mathcal L_m$. In fact, we can show the following
\begin{equation*}
n(\mathcal L_m)=-\sum_{1\leq j=2k-1\leq m} \operatorname{sgn}\bigg(\langle\mathcal L_m\frac{\partial U^{ (m)}}{\partial c_j},\frac{\partial U^{ (m)}}{\partial c_j}\rangle\bigg)=[\frac{m+1}{2}],\ k=1,2,\cdot\cdot\cdot,[\frac{m+1}{2}]
\end{equation*}
which reveals that the negative eigenvalues of  $\mathcal L_m$ are generated by the directions $\frac{\partial U^{ (m)}}{\partial c_j}$ for odd $j=1,3,\cdot\cdot\cdot,2[\frac{m+1}{2}]-1$.
\end{remark}
\section{Proof of the main results} \label{sec_5}
\setcounter{equation}{0}
This section is devoted to the proof of Theorem ~\ref{thm1.1} and Theorem ~\ref{thm1.3}. To do this, we need to prove that multi-solitons of \eqref{eq: BO} verify a stability criterion established by Maddocks and Sachs \cite{MS93}.  Recall that the variational principle \eqref{E-L}
 is the gradient of the functional \eqref{Lyafun} evaluated at $u=U^{ (m)}$. In general, the $m$-solitons  $U^{ (m)}(t,x)$ is not a minimum of $S_m$, rather, it is at best a constrained and nonisolated minimum of the following minimization problem
\begin{eqnarray*}
\min H_{m+1}(u(t)) \quad\quad \text{subject to} \quad  H_{j}(u(t))= H_{j}(U^{ (m)}(t)), \quad j=1,2,...,m.
\end{eqnarray*}
Now, we consider the second variation self-adjoint  operator $\mathcal L_m(t)$ defined by \eqref{eq:linearized n-soliton operator} and denote by
$$n(\mathcal L_m(t))$$ the number of negative eigenvalue of $\mathcal L_m(t)$. Observe that the above defined objects are  {\it a priorily} time-dependent. We also define the $m\times m$ Hessian matrix by
\begin{eqnarray}\label{3.31}
D(t):=\big\{\frac{\partial^2S_m(U^{ (m)}(t))}{\partial \mu_i\partial \mu_j}\big\},
\end{eqnarray}
and denote by $$p(D(t))$$ the number of positive eigenvalue of $D(t)$. Since $S_m(t)$ is a conserved quantity for the flow of \eqref{eq: BO}, the matrix $D(t)$ is independent of $t$. The proof of Theorem \ref{thm1.1} relies on the following theoretical result, which was first stated by Maddocks and Sachs \cite[Lemma 2.1]{MS93}. Maddocks-Sachs \cite{MS93} provided an outline for the proof of this result. For reader's convenience, we give a detailed proof here
\begin{proposition}\label{pr2.1}
Suppose that
\begin{equation}\label{n=p}
n(\mathcal L_m)=p(D).
\end{equation}
Then there exists a constant $C>0$ such that $U^{ (m)}$ is a non-degenerate unconstrained
minimum of the augmented Lagrangian (Lyapunov functional)
\begin{equation}\label{eq:augmented Lagrangian}
\Delta(u):=S_m(u)+\frac{C}{2}\sum_{j=1}^m\big(H_j(u)-H_{j}(U^{ (m)})\big)^2.
\end{equation}
As a consequence, $U^{ (m)}(t,x)$ is dynamically stable.
\end{proposition}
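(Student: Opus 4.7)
The plan is to adapt the Maddocks--Sachs framework \cite{MS93} to the present nonlocal setting. First I would compute the second variation of the augmented Lagrangian $\Delta$ at $u=U^{(N)}$. Since by \eqref{1.10} the profile $U^{(N)}$ is a critical point of $S_N$, the linear part of $S_N(U^{(N)}+\epsilon v)-S_N(U^{(N)})$ vanishes, while each factor $H_j(U^{(N)}+\epsilon v)-H_j(U^{(N)})$ is of order $\epsilon$. A straightforward Taylor expansion then yields
\begin{equation*}
\Delta(U^{(N)}+\epsilon v)-\Delta(U^{(N)})=\frac{\epsilon^2}{2}\,Q_C(v)+O(\epsilon^3),\quad Q_C(v):=\langle\mathcal L_N v,v\rangle+C\sum_{j=1}^N\langle H_j'(U^{(N)}),v\rangle^2,
\end{equation*}
so the whole question reduces to showing that, for $C$ chosen sufficiently large, the quadratic form $Q_C$ is coercive on $H^{N/2}(\R)$ modulo the kernel of $\mathcal L_N$.

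Next I would invoke the spectral inputs of Section \ref{sec_4}. Lemma \ref{lemma3.6} tells us that $\mathcal L_N$ has exactly $n(\mathcal L_N)=[\tfrac{N+1}{2}]$ negative eigenvalues, an $N$-dimensional kernel (spanned by the translation modes $\partial_{x_j}U^{(N)}$), and positive essential spectrum bounded away from zero. Let $V_-\subset L^2$ be the $n(\mathcal L_N)$-dimensional negative eigenspace and $V_0$ the kernel. On the $L^2$-orthogonal complement $V_+:=(V_-\oplus V_0)^{\perp}$ the operator $\mathcal L_N$ satisfies $\langle\mathcal L_N v,v\rangle\geq\alpha\|v\|_{H^{N/2}}^2$ for some $\alpha>0$. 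I would then express the matrix $D$ in \eqref{3.31} in terms of $\mathcal L_N$ and the gradients $\{H_j'(U^{(N)})\}$: differentiating the identity $S_N'(U^{(N)})=0$ with respect to $\mu_i$ (via the $\mathbf c\leftrightarrow\bm\mu$ map of Proposition \ref{pr2.2}) produces
\begin{equation*}
\mathcal L_N\,\partial_{\mu_i}U^{(N)}=-H_i'(U^{(N)}),\qquad i=1,\dots,N,
\end{equation*}
so that $D_{ij}=-\langle H_i'(U^{(N)}),\partial_{\mu_j}U^{(N)}\rangle=\langle\mathcal L_N\partial_{\mu_i}U^{(N)},\partial_{\mu_j}U^{(N)}\rangle$. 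In other words $D$ is the restriction of $\mathcal L_N$ to $\mathrm{span}\{\partial_{\mu_i}U^{(N)}\}$, and the hypothesis $n(\mathcal L_N)=p(D)$ asserts precisely that this $N$-dimensional test subspace already captures every negative direction of $\mathcal L_N$.

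Granted this, a standard min-max argument (the heart of \cite{MS93}, adapted in \cite{LN,LW}) gives $\langle\mathcal L_N v,v\rangle\geq\alpha'\|v\|_{H^{N/2}}^2$ on the codimension-$N$ subspace $\{v:\langle H_j'(U^{(N)}),v\rangle=0,\ j=1,\dots,N\}$ modulo $V_0$. Decomposing an arbitrary $v$ as $v=w+\sum_j a_j\,\partial_{\mu_j}U^{(N)}+v_0$ with $v_0\in V_0$, $w$ in the constrained subspace, and using the Cauchy--Schwarz inequality together with invertibility of $D$ on $\mathrm{span}\{\partial_{\mu_i}U^{(N)}\}$, one shows that for $C>0$ large enough,
\begin{equation*}
Q_C(v)\geq \kappa\,\|v-\Pi_{V_0}v\|_{H^{N/2}}^2
\end{equation*}
for some $\kappa>0$. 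This is the non-degeneracy statement: $U^{(N)}$ is a strict minimum of $\Delta$ in every direction transverse to the symmetry manifold generated by the translations $(x_1,\dots,x_N)\mapsto U^{(N)}(\cdot;\mathbf c,\mathbf x+\mathbf y)$.

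Finally I would close the Lyapunov argument. Each $H_j$ is conserved along the flow of \eqref{eq: BO} (for $s\geq N/2$ by Ionescu--Kenig \cite{IK07}), hence $\Delta$ is conserved. Given $u_0$ close to $U^{(N)}(0,\cdot;\mathbf c,\mathbf x)$ in $H^{N/2}$, continuity of $\Delta$ and of the $H_j$'s gives $\Delta(u(t))-\Delta(U^{(N)})=\Delta(u_0)-\Delta(U^{(N)})$ small for all $t$; combined with the coercive lower bound on $Q_C$ (upgraded to a nonlinear inequality by standard Taylor-remainder estimates on $H^{N/2}$), this forces $u(t)$ to remain close to the $N$-parameter orbit $\{U^{(N)}(\tau,\cdot;\mathbf c,\mathbf y):\tau\in\R,\ \mathbf y\in\R^N\}$ for all $t\in\R$, which is exactly the stability statement of Theorem \ref{thm1.1}. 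The main technical obstacle is the identification $D_{ij}=\langle\mathcal L_N\partial_{\mu_i}U^{(N)},\partial_{\mu_j}U^{(N)}\rangle$ and the accompanying min-max transfer, because the nonlocality of $\mathcal L_N$ and the $N$-dimensional kernel make the orthogonality bookkeeping more delicate than in the local KdV situation of \cite{MS93}; but once the spectral information of Section \ref{sec_4} is in hand, this is a finite-dimensional linear-algebra computation.
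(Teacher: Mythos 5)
Your proposal follows the same route as the paper: Proposition \ref{pr2.1} is the Maddocks--Sachs criterion, and the paper's own proof simply asserts the coercivity of $\Delta''(U^{(N)})$ transverse to the translation directions (outsourcing its justification to \cite{MS93}) and then runs the conservation/modulation chain of inequalities that you also give at the end. What you add is the internal mechanism of \cite{MS93}: the second variation $Q_C(v)=\langle\mathcal L_N v,v\rangle+C\sum_j\langle H_j'(U^{(N)}),v\rangle^2$, the identity $\mathcal L_N\,\partial_{\mu_i}U^{(N)}=-H_i'(U^{(N)})$ obtained by differentiating the Euler--Lagrange equation in $\mu_i$, the finite-dimensional reduction of the hypothesis $n(\mathcal L_N)=p(D)$, and the large-$C$ min-max transfer to coercivity modulo the kernel. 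This is a legitimate and in fact more self-contained write-up of the step the paper cites.

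One concrete slip: your formula for $D$ has the wrong sign. Since $\partial_{\mu_j}S_N(U^{(N)})=H_j(U^{(N)})$ (the first-variation term drops because $S_N'(U^{(N)})=0$), one gets
\begin{equation*}
D_{ij}=\langle H_i'(U^{(N)}),\partial_{\mu_j}U^{(N)}\rangle=-\langle\mathcal L_N\,\partial_{\mu_i}U^{(N)},\partial_{\mu_j}U^{(N)}\rangle,
\end{equation*}
so $D$ is \emph{minus} the restriction of $\mathcal L_N$ to ${\rm span}\{\partial_{\mu_i}U^{(N)}\}$. With the sign as you wrote it, $p(D)=n(\mathcal L_N)$ would count \emph{positive} directions of $\mathcal L_N$ on the test subspace and the min-max step would not follow; with the corrected sign it says exactly what your subsequent sentence claims, namely that the $N$-dimensional span of $\{\partial_{\mu_i}U^{(N)}\}$ already realizes all $[\frac{N+1}{2}]$ negative directions of $\mathcal L_N$, whence $\mathcal L_N\geq 0$ on $\{v:\langle H_j'(U^{(N)}),v\rangle=0\}$ modulo the kernel (here one also uses that $D$ is nonsingular, which the paper's Lemma \ref{lem:2.2.} provides, and that the kernel directions $\partial_{x_j}U^{(N)}$ satisfy the constraints). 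Once this sign is fixed, and with the routine G\aa rding-type upgrade from the $L^2$ spectral gap to $H^{\frac N2}$-coercivity made explicit, your argument is sound and matches the paper's, including the uniformity of $C,\gamma$ in $\mathbf x$ and the modulation in $\mathbf y$ needed for the orbital statement.
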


\begin{proof}
Since the functional $S_m$ depends only on wave speeds ${\mathbf c}$ and not on $t$
or ${\mathbf x}$. Hence, by construction of the augmented Lagrangian $\Delta$, any $m$-solitons with parameters ${\mathbf c}$ is a critical point of $\Delta$.
Moreover, there exists $\gamma>0$ (which, as well as $C$, can be chosen independently of ${\mathbf x}$) such that
for any $U^{ (m)}(\cdot,\cdot;{\mathbf c},{\mathbf x})$ and for any $h \in H^{\frac{m}{2}}(\R)$ such that
\[
\langle\nabla_{{\mathbf x}}U^{ (m)}(t,\cdot;{\mathbf c},{\mathbf x}),h\rangle=0,
\]
one has
\[
\langle \Delta''(U^{ (m)}(t,\cdot;{\mathbf c},{\mathbf x}))h,h\rangle\geq \gamma \|h\|^2_{H^{\frac{m}{2}}}.
\]
Now for any $\in H^{\frac{m}{2}}(\R)$ such that
\[\inf_{{\mathbf y}\in \R^m}\|u-U^{ (m)}(t,\cdot;{\mathbf c},{\mathbf y})\|_{H^{\frac{m}{2}}}<\varepsilon,\]
there exists ${\mathbf y}_u\in \R^m$ such that
\begin{eqnarray*}
&&\inf_{{\mathbf y}\in \R^m}\|u-U^{ (m)}(t,\cdot;{\mathbf c},{\mathbf y})\|^2_{H^{\frac{m}{2}}}\leq \frac{2}{\gamma}\bigg(\Delta(u)-\Delta(U^{ (m)}(t,\cdot;{\mathbf c},{\mathbf y}_u))\bigg)\\&&=\frac{2}{\gamma}\bigg(\Delta(u)-\Delta(U^{ (m)}(t,\cdot;{\mathbf c},{\mathbf x}))\bigg)=\frac{2}{\gamma}\bigg(\Delta(u_0)-\Delta(U^{ (m)}(0,\cdot;{\mathbf c},{\mathbf x}))\bigg)\\&&\leq C\|u_0-\Delta(U^{ (m)}(0,\cdot;{\mathbf c},{\mathbf x}))\|^2_{H^{\frac{m}{2}}}\leq C\delta^2<\varepsilon.
\end{eqnarray*}
Here we used the conservation of the augmented Lagrangian $\Delta$ by the \eqref{eq: BO} flow, given an initial data $u_0$ sufficiently close to an $m$-solitons profile $U^{ (m)}(0,\cdot;{\mathbf c},{\mathbf x})$, the closeness to the $m$-solitons manifold with speeds ${\mathbf c}$ is preserved for all time.
\end{proof}
Therefore, to complete the proof of Theorem~\ref{thm1.1}, it is sufficient to verify~\eqref{n=p}. We start with the count of the number of positive eigenvalues of the Hessian matrix $D$, which has been shown in \cite{M06}.

\begin{lemma}
 \label{lem:2.2.}
  For all
${\mathbf{c}}=(c_1,\ldots,c_m),{\mathbf{x}}=(x_1,\ldots,x_m)$ with $0<c_1 < \cdots<c_m$, we have
\[
  p(D)= [\frac{m+1}{2}].
 \]
\end{lemma}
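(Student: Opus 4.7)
The plan is to realize $D$ as a product of two explicit matrices and then diagonalize it by congruence. By the envelope theorem, since $U^{(N)}(\mu)$ is a critical point of $u\mapsto S_N(u;\mu)$ for each $\mu$, differentiation of the reduced functional $F(\mu):=S_N(U^{(N)}(\mu);\mu)$ gives $\partial F/\partial\mu_i=H_i(U^{(N)})$, and hence
\[
D_{ij}=\frac{\partial H_i(U^{(N)})}{\partial \mu_j}.
\]
Since the $N$-soliton is reflectionless ($\beta\equiv 0$), the trace identity~\eqref{2.10'} reduces to $H_n(U^{(N)})=\frac{(-1)^{n+1}\pi}{n}\sum_{k=1}^N c_k^n$, so the chain rule yields
\[
D=AJ^{-1},\qquad A_{ik}:=\frac{\partial H_i}{\partial c_k}=(-1)^{i+1}\pi\, c_k^{i-1},\qquad J_{kj}:=\frac{\partial \mu_k}{\partial c_j}=\sigma_{j,N-k},
\]
where the formula for $J_{kj}$ uses $\mu_k=\sigma_{N-k+1}$ together with the standard identity $\partial e_m/\partial c_j=e_{m-1}(c_1,\ldots,\hat c_j,\ldots,c_N)$.

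Next I would exploit the congruence $J^TDJ=J^TA$, which is well defined because distinctness of the wave speeds ensures $\det J\neq 0$. A direct computation, after reindexing $m=N-k$ and invoking the elementary polynomial identity
\[
\prod_{i\neq l}(x-c_i)=\sum_{m=0}^{N-1}(-1)^m \sigma_{l,m}\,x^{N-1-m},
\]
yields
\[
(J^TA)_{lj}=\pi(-1)^{N+1}\prod_{i\neq l}(c_j-c_i).
\]
For $j\neq l$ the factor $i=j$ forces this to vanish, while the diagonal entries $(J^TA)_{ll}=\pi(-1)^{N+1}\prod_{i\neq l}(c_l-c_i)$ are all nonzero. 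Thus $J^TDJ$ is a nonsingular diagonal matrix, and by Sylvester's law of inertia $D$ shares its signature.

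The final step is a sign count. Among the $N-1$ factors $(c_l-c_i)$ with $i\neq l$, exactly $l-1$ are positive (those with $i<l$) and $N-l$ are negative (those with $i>l$), so the $l$-th diagonal entry has sign $(-1)^{N+1}(-1)^{N-l}=(-1)^{l+1}$. The positive entries therefore correspond exactly to odd values of $l$ in $\{1,\dots,N\}$, of which there are $[\frac{N+1}{2}]$, giving $p(D)=[\frac{N+1}{2}]$. The main obstacle, such as it is, lies in recognizing the explicit form of $J^TA$ via the polynomial identity above: once that algebraic fact is spotted, the chain-rule setup, the envelope theorem step, and the final sign bookkeeping (which does require care with the paper's conventions $\mu_n=\sigma_{N-n+1}$ and the alternating signs from the trace formula) are routine.
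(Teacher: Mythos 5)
Your proposal is correct and follows essentially the same route as the paper: both write $D=AB^{-1}$ (your $J$ is the paper's $B$) using the reflectionless trace formula and the Vieta relation $\mu_n=\sigma_{N-n+1}$, pass to the congruent matrix $B^TDB=B^TA$, show it is diagonal with $l$-th entry of sign $(-1)^{l+1}$ built from the differences of wave speeds, and conclude $p(D)=[\frac{N+1}{2}]$ by Sylvester's law of inertia. Your only additions are making explicit the envelope-theorem justification of $\partial S_N/\partial\mu_i=H_i$ and the polynomial identity behind the evaluation of $B^TA$, both of which the paper states without detail.
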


\begin{proof}
The Hessian matrix $D$ is defined by \eqref{3.31}. It is a real symmetric matrix,
whose elements can be calculated explicitly for the $m$-solitons.
Indeed, since  $m$-solitons are reflectionless potentials, one takes $\beta=0$ in \eqref{trace for H},  the $n$-th conservation law  corresponding to $u=U^{ (m)}$ reduces to
$$H_n(U^{ (m)})=\pi (-1)^{n+1}\sum_{l=1}^m\frac{c_l^{n}}{n}.$$
If we regard $S_m$ as a function of $\mu_j$ $j=1, 2, ..., m)$, then from \eqref{Lyafun} and \eqref{E-L}, one has
 $$\frac{\partial S_m}{\partial \mu_j}=H_{j}, \quad j=1, 2, ..., m. $$
Hence the elements of the matrix $D$ are as follows
\begin{equation}\label{B3}
d_{jk}:=\frac{\partial H_{j}}{\partial \mu_k}
=\pi (-1)^{j+1}\sum_{l=1}^mc_l^{j-1}\frac{\partial c_l}{\partial \mu_k}.
\end{equation}
Let $A=(a_{jk})_{1\leq j,k\leq m}$ and $B=(b_{jk})_{1\leq j,k\leq m}$ be $m\times m$ matrices with
elements
$$a_{jk}=\pi (-1)^{j+1}c_k^{j-1}, $$
\begin{equation}\label{B4b}
b_{jk}=\frac{\partial \mu_j}{ \partial c_k},
\end{equation}
respectively. From \eqref{B4b} and the fact that $c_j\not=c_k$ for $j\not=k$, we see that
$$\det B=\prod_{1\leq j<k\leq m}(c_k-c_j)\neq0.$$
Thus $B$ is invertable. Now we can rewrite \eqref{B3} in the form
\begin{equation}\label{B5}
D=AB^{-1},
\end{equation}
which implies that $B^TDB=B^TA.$ From the Sylvester's law of inertia, one deduces that the number of
positive eigenvalues of $D$ coincides with that of $B^TA$. We know that $B^TA$ is a diagonal matrix
since the $(j, k)$ element of $B^TA$ becomes
\begin{equation}\label{B8}
(B^TA)_{jk}=\pi\sum_{l=1}^m(-1)^{l+1}\frac{\partial\sigma_{m-l+1}}{\partial c_j}c_k^{l-1}=\delta_{jk}\prod_{l\neq j}(c_l-c_k).
\end{equation}
It is  easy to see that the number of positive eigenvalues of $B^TA$ is equal to
$\left[\frac{m+1}{2}\right]$, which concludes the proof.
 \end{proof}

\begin{proof} [Proof of Theorem \ref{thm1.1}]
By Lemma \ref{lem:2.2.} and Lemma \ref{lemma3.6}, one has that $n(\mathcal L_m)=p(D)=[\frac{m+1}{2}]$. The proof of Theorem \ref{thm1.1} is obtained directly in view of Proposition \ref{pr2.1}, since $U^{ (m)}(t,x)$
is now an (non-isolated) unconstrained minimizers of the augmented Lagrangian \eqref{eq:augmented Lagrangian}
which therefore serves as a Lyapunov function.
\end{proof}

Now we remain to prove  Theorem \ref{thm1.3}.
\begin{proof}[Proof of Theorem \ref{thm1.3}]
The linearized operators around the $m$-solitons $\mathcal {L}_m=S''_m(U^{ (m)})$ possess $[\frac{m+1}{2}]$ negative eigenvalues, which has been verified from \eqref{inertia L}. Next, we need to prove \eqref{eigenvalue1.3}. As  $t$ goes to $\infty$, the spectrum $\sigma(\mathcal {L}_m(t))$ of $\mathcal {L}_m(t)$ converges
to the union of the spectrum $\sigma(\mathcal {L}_{m,j})$ of $\mathcal {L}_{m,j}=S_m''(Q_{c_j})$, namely
\[
\sigma(\mathcal {L}_m(t))\rightarrow \bigcup_{j=1}^m\sigma(\mathcal {L}_{m,j}), \quad \text{as} \quad t\rightarrow +\infty.
\]
Since for each $m$, the operators $\mathcal {L}_m(t)$ are isoinertial, the spectrum of which $\sigma(\mathcal {L}_m(t))$ is independent of $t$. Therefore, the negative eigenvalues of $\mathcal {L}_m$ are exactly the same with the negative eigenvalues of $\mathcal {L}_{m,j}$ for all $j=1,2,\cdot\cdot\cdot,m$. In view of Lemma \ref{lemma3.6}, $\mathcal {L}_{m,j}$ possesses negative eigenvalues if $j=2k-1$ and $1\leq k\leq [\frac{m+1}{2}]$. We will show that such negative eigenvalues are exactly $\nu_k$ \eqref{eigenvalue1.3}. Indeed, by induction, $m=1$ is verified in \eqref{ pos eigen}, the associated negative eigenvalue is $\nu_1=-\frac{\sqrt{5}+1}{2}c$ \eqref{ pos eigen}. Suppose now \eqref{eigenvalue1.3} holds for $m=K$, namely, the $[\frac{K+1}{2}]$-th negative eigenvalue of $\mathcal {L}_{K}$ is
\begin{eqnarray}\label{Keigenvalue}
\nu^K_k:=-Cc_{2k-1}\prod_{j\neq 2k-1}^K(c_j-c_{2k-1}),\quad k=1,2,\cdot\cdot\cdot,[\frac{K+1}{2}].
\end{eqnarray}

If $m=K+1$ even, in this case $[\frac{K+1}{2}]=[\frac{K+2}{2}]$, for $k=1,2,\cdot\cdot\cdot,[\frac{K+1}{2}]$, one has
\begin{equation}\label{Kplus1 and K}
\mathcal {L}_{K+1,2k-1}=S_{K+1}''(Q_{c_{2k-1}})=\left(\mathcal{R}(Q_{c_{2k-1}})+c_{K+1}\right)I_{K}''(Q_{c_{2k-1}}).
\end{equation}
By Lemma \ref{le3.6}, the operator $(\mathcal{R}(Q_{c_{2k-1}})+c_{K+1})$ has an eigenvalue $c_{K+1}-c_{2k-1}>0$, the continuous spectrum is $[c_{K+1},+\infty)$ whose generalized eigenfunctions are not in $L^2(\R)$. Therefore, the $[\frac{K+2}{2}]$-th
negative eigenvalues of $\mathcal {L}_{K+1,2k-1}$ are
\begin{eqnarray}\label{K+1eigenvalue}
\nu^{K+1}_k:&=&\big(c_{K+1}-c_{2k-1}\big)\nu^K_k=-C\big(c_{K+1}-c_{2k-1}\big)c_{2k-1}\prod_{j\neq 2k-1}^K(c_j-c_{2k-1})\nonumber\\&=&-Cc_{2k-1}\prod_{j\neq 2k-1}^{K+1}(c_j-c_{2k-1}),\quad k=1,2,\cdot\cdot\cdot,[\frac{K+1}{2}],
\end{eqnarray}
where the constant $C>0$ is different with respect to \eqref{Keigenvalue}.

If $m=K+1$ odd, in this case $[\frac{K+1}{2}]+1=[\frac{K+2}{2}]$. For $k=1,2,\cdot\cdot\cdot,[\frac{K+1}{2}]$, following by the same argument, the front $[\frac{K+1}{2}]$
negative eigenvalues of $\mathcal {L}_{K+1}$ are given by \eqref{K+1eigenvalue}. Now we compute the last negative eigenvalue which has been proven Lemma \ref{lemma3.6}. Since
\begin{equation}\label{Kplus1 and K2}
\mathcal {L}_{K+1,K+1}=S_{K+1}''(Q_{c_{K+1}})=\left(\mathcal{R}(Q_{c_{K+1}})+c_{j}\right)\tilde{S}_{K}''(Q_{c_{K+1}}),
\end{equation}
where $\tilde{S}_{K}$ is the action that with a wave speed $c_j$ in $S_K$ replacing to $c_{K+1}$ for some $1\leq j\leq K$.
By the assumption in \eqref{Keigenvalue}, the discrete eigenvalue of the operator $\tilde{S}_{K}''(Q_{c_{K+1}})$ is
\[
-Cc_{K+1}\prod_{l\neq j}^K(c_l-c_{K+1}).
\]
Since by Lemma \ref{le3.6}, the operator $\mathcal{R}(Q_{c_{K+1}})+c_{j}$ has an eigenvalue $c_j-c_{K+1}<0$, the continuous spectrum of which is the interval $[c_{j},+\infty)$ and the generalized eigenfunctions are not in $L^2(\R)$. Therefore, the last
negative eigenvalues of $\mathcal {L}_{K+1}$ is
\begin{equation}\label{K+1eigen}
\nu^{K+1}_{[\frac{K+2}{2}]}:=\big(c_j-c_{K+1}\big)\big(-Cc_{K+1}\prod_{l\neq j}^K(c_l-c_{K+1})\big)=-Cc_{K+1}\prod_{l=1}^K(c_l-c_{K+1}).
\end{equation}
 The proof of Theorem \ref{thm1.3} is concluded by combining \eqref{K+1eigenvalue} and \eqref{K+1eigen}.
\end{proof}
\section*{Acknowledgment}
 Y. Lan acknowledges the support of the China National Natural Science Foundation under grant number 12201340, Z. Wang acknowledges the support of the Ministry of Education under grant number HZKY20220105 and Guangdong Natural Science Foundation under grant number 2023A1515010706. Z. Wang is also indebted to Prof. Stefan Le Coz for stimulating discussions.

  \section*{Data Availability}
 The data that supports the findings of this study are available within the article.

 \section*{Conflict of interest}
 The authors have no conflicts to disclose.

\end{document}